\providecommand{\tabularnewline}{\\}
\theoremstyle{plain}
\newtheorem{thm}{Theorem}[section]
  \theoremstyle{definition}
  \newtheorem{defn}[thm]{Definition}
  \theoremstyle{plain}
  \newtheorem{prop}[thm]{Proposition}
  \theoremstyle{plain}
  \newtheorem{lem}[thm]{Lemma}
  \theoremstyle{remark}
  \newtheorem{rem}[thm]{Remark}
  \theoremstyle{plain}
  \newtheorem{cor}[thm]{Corollary}
 \theoremstyle{definition}
  \newtheorem{example}[thm]{Example}
\renewcommand{\hom}{\mathrm{Hom}}
\newcommand{\Ad}{\mathop{\mathrm{Ad}}\nolimits}
\def\lie{\mathsf{Lie}}
\def\quot{/\!\!/}
\numberwithin{equation}{section} \numberwithin{teorema}{section}
\begin{document}

\title[Stability and Irreducibility in Reductive Groups]{Stability of Affine $G$-varieties and Irreducibility in Reductive
Groups}

\author{Ana Casimiro and Carlos Florentino}
\begin{abstract}
Let $G$ be a reductive affine algebraic group, and let $X$ be an
affine algebraic $G$-variety. We establish a (poly)stability criterion
for points $x\in X$ in terms of intrinsically defined closed subgroups
$H_{x}$ of $G$, and relate it with the numerical criterion of Mumford,
and with Richardson and Bate-Martin-Röhrle criteria, in the case $X=G^{N}$.
Our criterion builds on a close analogue of a theorem of Mundet and
Schmitt on polystability and allows the generalization to the algebraic
group setting of results of Johnson-Millson and Sikora about complex
representation varieties of finitely presented groups. By well established
results, it also provides a restatement of the non-abelian Hodge theorem
in terms of stability notions.
\end{abstract}

\subjclass[2000]{\noindent 14M15, 14D20, 14H60.}

\maketitle

\section{Introduction and Main Results}

Let $G$ be a connected complex affine reductive group, and $\Gamma$
a finitely presented group. The $G$-representation variety of $\Gamma$,
defined as $X:=\hom(\Gamma,G)$, is a complex affine $G$-variety
under the canonical conjugation action of $G$ on $X$. In the context
of Geometric Invariant Theory, and building on earlier work by Johnson-Millson,
Richardson and others (see \cite{JM,Rich}), a recent article of Sikora
\cite{Sikora} establishes a close relationship between stability
properties of a point $x\in X$ and the property that the image of
the representation, $x(\Gamma)\subset G$, is irreducible or completely
reducible as a subgroup of $G$.

In this article, we generalize these relationships to a bigger class
of affine $G$-varieties, where $G$ is an affine reductive group,
not necessarily irreducible, defined over an algebraically closed
field $k$, of characteristic zero. 

Besides its intrinsic relevance as stability criteria, the constructions
examined here perfectly agree with other known results for certain
specific classes of $G$-varieties. For example, the non-abelian Hodge
theorem (see \cite{GGM} or \cite{We}), in the case of complex reductive
groups, can be restated as a correspondence between stable points
in distinct (however homeomorphic) varieties. Also, from our set up,
one can recover the Mundet-Schmitt criterion for polystability \cite{MR}.

To describe our main results, let $Y(G)$ denote the set of one parameter
subgroups (1PS for short) of $G$, that is, homomorphisms $\lambda$
from the multiplicative group $k^{\times}$ to $G$. Given a 1PS $\lambda\in Y(G)$,
and $g\in G$, the morphism $t\mapsto\lambda(t)g\lambda(t)^{-1}$
may or may not extend to $k$ (as a morphism of affine varieties).
In case there is such an extension, we say that $\lim_{t\to0}\lambda(t)g\lambda(t)^{-1}$
exists. It is known that\[
P(\lambda)=\{g\in G:\lim_{t\to0}\lambda(t)g\lambda(t)^{-1}\mbox{ exists}\}\]
is always a parabolic subgroup of $G$.

Let $X$ be an affine variety with an action of $G$, that is, an
\emph{affine $G$-variety}. Recall that a point $x\in X$ is called
\emph{polystable }if its $G$-orbit is closed. Let $G_{x}$ denote
the stabilizer of $x\in X$, and consider the normal subgroup \[
G_{X}:=\bigcap_{x\in X}G_{x}\subset G.\]
A point $x\in X$ will be called \emph{stable} if it is polystable
and the quotient $G_{x}/G_{X}$ is finite. Note that there are many
slightly different notions of stability in the literature (see \cite{MumFoKir,N,Rich}),
but all of them coincide when $G_{X}$ is finite. We also define $x$
to be \emph{equicentral} if $x$ is polystable and $G_{x}=G_{X}$.

We denote by $\Lambda_{x}$ the subset of $Y(G)$ consisting of 1PS
$\lambda$ such that $\lim_{t\to0}\lambda(t)\cdot x$ exists (where
$g\cdot x$ denotes the action of $g\in G$ on $x\in X$, see Section
3 for details). We say that a subset $\Lambda\subset Y(G)$ is \emph{symmetric}
if given any $\lambda\in\Lambda$, there is another 1PS $\lambda'\in\Lambda$
such that $P(\lambda)\cap P(\lambda')$ is a Levi subgroup of both
$P(\lambda)$ and $P(\lambda')$. Following the constructions in Mundet-Schmitt
\cite{MR}, one can show (See Theorem \ref{thm:poly-symm}).
\begin{thm}
\label{thm:M-S}Let $G$ be a reductive algebraic group and $X$ be
an affine $G$-variety, both defined over $k$. Then, a point $x\in X$
is polystable if and only if $\Lambda_{x}$ is symmetric.
\end{thm}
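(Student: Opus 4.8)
The plan is to reduce to the Hilbert–Mumford theory for affine $G$-varieties, exploiting the characterization of closed orbits via one-parameter subgroups. The starting observation is Kempf's version of the Hilbert–Mumford criterion: for an affine $G$-variety $X$, the orbit $G\cdot x$ is closed if and only if, for every $\lambda\in\Lambda_x$ (i.e.\ every 1PS for which $\lim_{t\to 0}\lambda(t)\cdot x =: x_0$ exists), the limit point $x_0$ lies in the same orbit, $x_0\in G\cdot x$. So I would first record this equivalence, and then show that ``$x_0\in G\cdot x$ for all degenerating $\lambda$'' is equivalent to symmetry of $\Lambda_x$.

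For the direction \emph{polystable $\Rightarrow$ symmetric}: given $\lambda\in\Lambda_x$ with limit $x_0$, polystability gives $g\in G$ with $g\cdot x_0 = x$. The key point, which follows the Mundet–Schmitt argument, is that the element $g$ can be taken inside $P(\lambda)$ — indeed $x_0$ is fixed by the image of $\lambda$ up to the parabolic, and one arranges $g\in P(\lambda)$ by absorbing the unipotent radical. Then I set $\lambda' := \Ad(g)\lambda = g\lambda g^{-1}$ (equivalently $\lambda'(t)=g\lambda(t)g^{-1}$). Since $g\in P(\lambda)$ one has $P(\lambda') = gP(\lambda)g^{-1}= P(\lambda)$ only up to conjugacy by the Levi part; more precisely, writing $g=lu$ with $l$ in a Levi $L$ of $P(\lambda)$ and $u\in R_u(P(\lambda))$, one checks $\lambda'':=l\lambda l^{-1}$ has $P(\lambda)\cap P(\lambda'')$ equal to a common Levi of both (this is the standard fact that $P(\mu)\cap P(\mu^{-1})$ is a Levi of $P(\mu)$, transported by $l$). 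One then verifies $\lambda''$ (or the genuine opposite $\lambda^{-1}$ conjugated appropriately) lies in $\Lambda_x$: indeed $\lim_{t\to 0}\lambda''(t)\cdot x = l\cdot(\lim_{t\to 0}\lambda(t)\cdot (l^{-1}\cdot x))$, and using $u\cdot x_0$-type manipulations together with $g\cdot x_0=x$ one produces the required reverse-degenerating 1PS. Hence $\Lambda_x$ is symmetric.

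For the direction \emph{symmetric $\Rightarrow$ polystable}: suppose $\Lambda_x$ is symmetric but $G\cdot x$ is not closed. By Kempf there is $\lambda\in\Lambda_x$ with limit $x_0\notin G\cdot x$, and moreover $x_0$ can be chosen polystable (its orbit is the unique closed orbit in $\overline{G\cdot x}$). Pick $\lambda'\in\Lambda_x$ witnessing symmetry, so $Q:=P(\lambda)\cap P(\lambda')$ is a common Levi. The plan is to show that the existence of both limits $\lim_{t\to0}\lambda(t)\cdot x$ and $\lim_{t\to0}\lambda'(t)\cdot x$, with $P(\lambda),P(\lambda')$ in ``opposite'' position, forces $\lambda$ (and $\lambda'$) to fix $x$, i.e.\ to degenerate $x$ trivially — more precisely, that the orbit map $t\mapsto\lambda(t)\cdot x$ extends over both $0$ and $\infty$ of $\mathbb{P}^1$ in a suitable equivariant compactification, hence is constant, so $x_0=x\in G\cdot x$, a contradiction. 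Concretely: restrict the action to the reductive group $Q$; the 1PS $\lambda$ normalizes $Q$ and $Q$-equivariantly the closure of $\langle\lambda\rangle\cdot x$ is an affine toric-type variety, and having limits on both ends makes it a complete curve, hence a point. Running Kempf's criterion on each irreducible component / connected case handles the non-connected $G$.

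The main obstacle I anticipate is the bookkeeping in the polystable $\Rightarrow$ symmetric direction: arranging the conjugating element $g$ to lie in $P(\lambda)$ and then extracting from its Levi component a genuine element $\lambda'\in\Lambda_x$ with $P(\lambda)\cap P(\lambda')$ an honest common Levi — this is exactly where the Mundet–Schmitt machinery does its work, and getting the parabolic-intersection condition (rather than just ``opposite up to conjugacy'') is delicate. A secondary technical point is handling the case where $G$ is not connected: one must check that $P(\lambda)$, the notion of Levi, and the limit arguments all behave well, presumably by passing to the identity component $G^{\circ}$ and noting $\Lambda_x$, symmetry, and orbit-closedness are all detected there since $G/G^{\circ}$ is finite.
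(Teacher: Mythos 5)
Your overall architecture --- Kempf's characterization of closed orbits via one-parameter subgroups, limits landing back in the orbit via the unipotent radical, and the ``extends to $\mathbb{P}^{1}_{k}$, hence constant'' endgame --- is the same as the paper's (Theorem \ref{thm:poly-symm}), but both key steps have genuine gaps as written. In the direction \emph{polystable} $\Rightarrow$ \emph{symmetric}, the opposite 1PS you construct cannot work: for $g\in P(\lambda)$ one has $P(g\lambda g^{-1})=gP(\lambda)g^{-1}=P(\lambda)$ exactly (R-parabolics are self-normalizing), and for $l\in L(\lambda)$ one has $l\lambda l^{-1}=\lambda$ since $L(\lambda)$ is the centralizer of $\lambda(k^{\times})$; so for both your $\lambda'$ and $\lambda''$ the intersection with $P(\lambda)$ is all of $P(\lambda)$, which is not an R-Levi unless $P(\lambda)=G$. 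An opposite must involve inversion. The paper's route is: polystability plus $\lambda\in\Lambda_{x}$ gives $\lambda^{+}x=u\cdot x$ with $u\in U(\lambda)$ (the strengthened Hilbert--Mumford theorem, \cite{BMRT}); this limit is fixed by $\lambda(k^{\times})$, so $\lambda^{-1}\in\Lambda_{u\cdot x}$, equivalently $u^{-1}\lambda^{-1}u\in\Lambda_{x}$, and $P(u^{-1}\lambda^{-1}u)\cap P(\lambda)=u^{-1}L(\lambda)u$ is a common R-Levi. You mention $\lambda^{-1}$ only parenthetically and never carry this through.

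In the converse direction, the unjustified step is the claim that the existence of $(\lambda')^{+}x$ for \emph{some} opposite $\lambda'$ forces $\lambda^{-}x=\lim_{t\to\infty}\lambda(t)\cdot x$ to exist. An opposite of $\lambda$ need not be $\lambda^{-1}$, nor conjugate to it by anything compatible with $x$, so ``limits at both ends of the same ray'' does not follow from symmetry without work; this is exactly where the paper conjugates $\lambda$ by a suitable $u\in U(\lambda)$ so that $u\lambda u^{-1}$ and $\lambda'$ lie in a common maximal torus and then invokes Lemma \ref{lem:opposite} (uniqueness of the opposite R-parabolic through a given R-Levi) to identify $P((\lambda')^{-1})$ with $P(\lambda)$ and relate $(\lambda')^{+}x$ to $\lambda^{-}x$. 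Your ``affine toric-type variety / complete curve'' paragraph is the correct endgame but skips this bridge entirely. (The non-connectedness worry, by contrast, is a non-issue: R-parabolics, R-Levis and the limit formalism are defined directly for non-connected reductive $G$.)
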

Consider now another natural construction. Given $x\in X$, define
the closed subgroup of $G$, \[
H_{x}:=\bigcap_{\lambda\in\Lambda_{x}}P(\lambda).\]
Recall that a closed subgroup $H$ of $G$ is called \emph{irreducible}
if it is not contained in a proper parabolic subgroup of $G$, and
it is called \emph{completely reducible} if, for any inclusion of
$H$ in a parabolic $P$ of $G$, there is a Levi subgroup $L$ of
$P$ such that $H\subset L$. These natural notions, generalizing
the well known definitions for the general linear group (see \cite{Serre}),
can be extended to algebraic groups which are not necessarily connected
(see Section 2, below).

Our main result applies to certain affine $G$-varieties where the
existence of limits under one parameter subgroups in $X$ is related
to the existence of limits under conjugation in $G$. More precisely,
consider the following condition for every pair $x\in X$ and $\lambda\in Y(G)$:\begin{equation}
\mbox{if }H_{x}\subset P(\lambda)\mbox{, then }\lambda\in\Lambda_{x}.\label{eq:proper}\end{equation}
 The main result can be presented as follows (see Theorems \ref{thm:polyCR}
and \ref{thm:1PSstableIrred}). Let $Z(G)$ denote the center of $G$. 
\begin{thm}
\label{thm:Main}Let $G$ be reductive and $X$ be an affine $G$-variety,
as above. Suppose condition (\ref{eq:proper}) is satisfied. Then:

(1) the subgroup $H_{x}$ is completely reducible if and only if $x$
is polystable;

(2) if $G_{X}=Z(G)$, then $H_{x}=G$ if and only if $x$ is stable.
\end{thm}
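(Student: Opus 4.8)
The plan is to deduce both parts from Theorem~\ref{thm:M-S}, from condition~(\ref{eq:proper}), and from standard structure theory of parabolic subgroups. I will use: (i) for $\lambda\in Y(G)$, $P(\lambda)=R_{u}(P(\lambda))\rtimes Z_{G}(\lambda)$, every Levi subgroup of $P(\lambda)$ is an $R_{u}(P(\lambda))$-conjugate of $Z_{G}(\lambda)$ and hence equals $Z_{G}(\mu)$ for some $\mu\in Y(G)$ with $P(\mu)=P(\lambda)$, in which case $Z_{G}(\mu)=P(\mu)\cap P(\mu^{-1})$ is a Levi subgroup of $P(\mu^{-1})$ as well; (ii) by the conventions of Section~2, the parabolic subgroups of $G$ are exactly the subgroups $P(\mu)$, $\mu\in Y(G)$; (iii) $Y(G_{x})\subseteq\Lambda_{x}$, since a $1$PS with image in $G_{x}$ has $\lim_{t\to0}\lambda(t)\cdot x=x$; (iv) $P(\lambda)=G$ if and only if $\Img(\lambda)\subseteq Z(G)^{\circ}$: if $P(\lambda)=G$ then $R_{u}(P(\lambda))$ is a connected normal unipotent subgroup of the reductive group $G$, hence trivial, so $Z_{G}(\lambda)=P(\lambda)=G$ and the connected group $\Img(\lambda)$ lies in $Z(G)$, hence in $Z(G)^{\circ}$, while the converse is clear because $Z(G)^{\circ}$ is central; (v) the stabilizer of a point with closed $G$-orbit is reductive (Matsushima's criterion).

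\emph{Part (1).} Suppose $H_{x}$ is completely reducible; by Theorem~\ref{thm:M-S} it suffices to show $\Lambda_{x}$ is symmetric. Fix $\lambda\in\Lambda_{x}$, so $H_{x}\subseteq P(\lambda)$; complete reducibility puts $H_{x}$ inside a Levi subgroup $Z_{G}(\mu)$ of $P(\lambda)$ with $P(\mu)=P(\lambda)$. Then $H_{x}\subseteq Z_{G}(\mu)\subseteq P(\mu^{-1})$, so condition~(\ref{eq:proper}) gives $\mu^{-1}\in\Lambda_{x}$, and $P(\lambda)\cap P(\mu^{-1})=P(\mu)\cap P(\mu^{-1})=Z_{G}(\mu)$ is a Levi subgroup of both $P(\lambda)$ and $P(\mu^{-1})$; this proves symmetry. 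Conversely, suppose $x$ is polystable, so $\Lambda_{x}$ is symmetric, and let $P\supseteq H_{x}$ be a parabolic; write $P=P(\mu)$. By~(\ref{eq:proper}), $\mu\in\Lambda_{x}$, so by symmetry there is $\mu'\in\Lambda_{x}$ with $L:=P(\mu)\cap P(\mu')$ a Levi subgroup of $P(\mu)=P$; as $\mu,\mu'\in\Lambda_{x}$ we have $H_{x}\subseteq P(\mu)\cap P(\mu')=L$. Hence $H_{x}$ is completely reducible.

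\emph{Part (2).} Assume $G_{X}=Z(G)$. If $H_{x}=G$, then $H_{x}$ is contained in no proper parabolic, so it is completely reducible and $x$ is polystable by~(1); hence $G_{x}$ is reductive by~(v). Since $\bigcap_{\lambda\in\Lambda_{x}}P(\lambda)=G$, every $\lambda\in\Lambda_{x}$ has $P(\lambda)=G$, so $\Img(\lambda)\subseteq Z(G)^{\circ}$ by~(iv); applying this to $Y(G_{x})\subseteq\Lambda_{x}$ and using that a torus is generated by the images of its $1$PS, any maximal torus $T$ of $G_{x}^{\circ}$ satisfies $T\subseteq Z(G)^{\circ}$. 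A connected reductive group with central maximal torus is a torus, so $G_{x}^{\circ}=T\subseteq Z(G)^{\circ}\subseteq G_{x}^{\circ}$, whence $G_{x}^{\circ}=Z(G)^{\circ}$, $G_{x}/G_{X}$ is finite, and $x$ is stable. Conversely, if $x$ is stable, then $Z(G)\subseteq G_{x}$ with $G_{x}/Z(G)$ finite, so $\dim G_{x}=\dim Z(G)$ and $G_{x}^{\circ}=Z(G)^{\circ}$. Given $\lambda\in\Lambda_{x}$, the point $y:=\lim_{t\to0}\lambda(t)\cdot x$ exists and is fixed by $\lambda$; as $x$ is polystable its orbit is closed, so $y=g\cdot x$ for some $g\in G$ and $\Img(\lambda)\subseteq G_{y}=gG_{x}g^{-1}$. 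Then $g^{-1}\lambda g$ has image in $G_{x}^{\circ}=Z(G)^{\circ}$, and since $Z(G)^{\circ}$ is central, hence conjugation-invariant, $\Img(\lambda)\subseteq Z(G)^{\circ}$, so $P(\lambda)=G$. As $\lambda$ was arbitrary, $H_{x}=\bigcap_{\lambda\in\Lambda_{x}}P(\lambda)=G$.

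\emph{Main obstacle.} The delicate point is that the whole argument is run over a possibly disconnected $G$, so outside of Theorem~\ref{thm:M-S} the actual work is to verify that the Section~2 notions of parabolic and Levi subgroup coincide with the $R$-parabolic/$R$-Levi descriptions $P(\mu)$, $Z_{G}(\mu)$, and that facts~(i), (ii), (iv) above hold without connectedness hypotheses; granting that, parts~(1) and~(2) follow formally from Theorem~\ref{thm:M-S}, condition~(\ref{eq:proper}), and Matsushima's criterion.
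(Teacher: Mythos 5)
Your proof is correct. Part (1) is essentially the paper's own argument: both directions are routed through Theorem \ref{thm:M-S} (polystability $\Leftrightarrow$ symmetry of $\Lambda_{x}$), using properness to pass between ``$H_{x}\subset P(\mu)$'' and ``$\mu\in\Lambda_{x}$'', and the standard fact that every R-Levi of $P(\lambda)$ is $L(\mu)=P(\mu)\cap P(\mu^{-1})$ for some $\mu$ with $P(\mu)=P(\lambda)$; this matches Proposition \ref{pro:symCR} and Theorem \ref{thm:polyCR}, and the non-connected structure theory you flag as the ``main obstacle'' is exactly what \cite{BMR} supplies.

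Part (2), however, takes a genuinely different route. The paper (Theorem \ref{thm:1PSstableIrred}) reduces stability to \emph{1PS stability} via Theorem \ref{thm:stable-1ps-stable} -- whose proof needs the torus-lifting Lemma \ref{lem:Tori} for the surjection $G\to G/G_{X}$ -- and then translates ``no non-central $\lambda$ lies in $\Lambda_{x}$'' into irreducibility of $H_{x}$ through the contrapositive of properness. You instead argue directly on the stabilizer: $H_{x}=G$ forces $P(\lambda)=G$ for every $\lambda\in\Lambda_{x}\supseteq Y(G_{x})$, so every 1PS of $G_{x}$ is central; combined with Matsushima's criterion ($G_{x}$ reductive since the orbit is closed, as the paper itself invokes via \cite[1.3.3]{Rich} in Theorem \ref{thm:polyCRrep-1}) and the fact that a connected reductive group with central maximal torus is a torus, this gives $G_{x}^{0}=Z(G)^{0}$ and hence finiteness of $NG_{x}$. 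Conversely you show ``stable $\Rightarrow H_{x}=G$'' by observing that $\lambda^{+}x$ is $\lambda$-fixed and lies in the closed orbit, so $\Img\lambda$ is conjugate into $G_{x}^{0}=Z(G)^{0}$ and $P(\lambda)=G$. Your version buys independence from the 1PS-stability machinery and Lemma \ref{lem:Tori} (and the direction ``stable $\Rightarrow H_{x}=G$'' does not even use properness of $\eta$), at the cost of importing Matsushima's criterion; the paper's version buys the intermediate statement that stability forces $H_{x}$ irreducible under the weaker hypothesis $Z(G)\subset G_{X}$, plus the reusable equivalence stable $\Leftrightarrow$ 1PS stable.
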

The proof of (1) above follows similar arguments in Mundet-Schmitt
(see\cite{MR}), which in turn rely on several techniques available,
such as the strengthening of Hilbert-Mumford criterion by Birkes/Richardson
and Kempf \cite{Kempf78,Birkes}. The proof of (2) uses the notion
of 1PS stability: a point $x$ is called 1PS stable if $\lambda\in\Lambda_{x}$
implies $\lambda\in Y(G_{X})\subset Y(G)$. Although this looks more
general than the notion of stability above, we show that the two definitions
agree (see Theorem \ref{thm:new-Mumford-crit}).

Suppose now $X$ is a closed $G$-invariant subvariety of $G^{N}$,
where $G$ acts on $G^{N}$ diagonally by conjugation. For a given
$x\in X$, let $\phi_{x}$ be the subgroup of $G$ generated by the
elements $f_{1}(x),...,f_{N}(x)\in G$, where $f=(f_{1},...,f_{N}):X\hookrightarrow G^{N}$
is the natural inclusion. For this class of $G$-varieties $X$, condition
(\ref{eq:proper}) is automatically satisfied (although $G_{X}\neq Z(G)$
in general). 

Our setup provides another approach to results of Richardson (see
\cite[Theorems 3.6 and 4.1]{Rich}), Martin and Bate-Martin-Röhrle
(see \cite[Proposition 2.13 and Corollary 3.7]{Martin,BMR}) relating
stability (resp. polystability) of $x\in X$ with irreducibility (resp.
complete reducibility) of $\phi_{x}$. Together with a corresponding
relation for equicentral points of $X$, they can be stated as follows
(see Theorem \ref{thm:polyCRrep-1}). We say that a subgroup $H\subset G$
is \emph{isotropic} if it is completely reducible and its centralizer
equals $Z(G)$. An isotropic subgroup is always irreducible, but not
conversely (see Section \ref{sec:appen} in the Appendix).
\begin{thm}
\label{thm:Main-Rich}Let $X$ be a closed $G$-invariant subvariety
of $G^{N}$, as above. Then,

(1) A point $x$ is stable (resp. polystable) if and only if $\phi_{x}$
is an irreducible (resp. completely reducible) subgroup of $G$.

(2) If $G_{X}=Z(G)$, then $x$ is equicentral if and only if $\phi_{x}$
is isotropic.
\end{thm}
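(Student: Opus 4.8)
The plan is to obtain Theorem~\ref{thm:Main-Rich} directly from Theorem~\ref{thm:Main}; the only genuinely new work is the translation of ``existence of limits under one-parameter subgroups inside $X\subset G^{N}$'' into ``containment $\phi_{x}\subset P(\lambda)$ inside $G$''. Fix $x\in X$ with $f(x)=(f_{1}(x),\dots,f_{N}(x))$. As $G$ acts by simultaneous conjugation, the curve $t\mapsto\lambda(t)\cdot x$ is $t\mapsto\bigl(\lambda(t)f_{1}(x)\lambda(t)^{-1},\dots,\lambda(t)f_{N}(x)\lambda(t)^{-1}\bigr)$, and it lies in $X$ for all $t\in k^{\times}$; since $X$ is closed in $G^{N}$ and $k^{\times}$ is dense in $k$, this curve extends to a morphism $k\to X$ exactly when it extends to a morphism $k\to G^{N}$, i.e.\ exactly when each $\lim_{t\to0}\lambda(t)f_{i}(x)\lambda(t)^{-1}$ exists, i.e.\ each $f_{i}(x)\in P(\lambda)$, i.e.\ (using that $P(\lambda)$ is a subgroup) $\phi_{x}\subset P(\lambda)$. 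Hence
\[
\Lambda_{x}=\{\lambda\in Y(G):\phi_{x}\subset P(\lambda)\},\qquad H_{x}=\bigcap_{\lambda\in\Lambda_{x}}P(\lambda)=\bigcap\{P\text{ parabolic}:\phi_{x}\subset P\}.
\]
In particular $\phi_{x}\subset H_{x}$, so condition (\ref{eq:proper}) -- which here reads ``$H_{x}\subset P(\lambda)$ implies $\phi_{x}\subset P(\lambda)$'' -- is automatic, and Theorem~\ref{thm:Main} is applicable to every closed $G$-invariant $X\subset G^{N}$.

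The second step is group-theoretic: $\phi_{x}$ and $H_{x}$ are contained in exactly the same parabolic subgroups of $G$ (one inclusion is $\phi_{x}\subset H_{x}$, the other is the definition of $H_{x}$ as their intersection). Two consequences. First, $H_{x}=G$ if and only if $\phi_{x}$ lies in no proper parabolic, i.e.\ if and only if $\phi_{x}$ is irreducible. Second, $H_{x}$ is completely reducible if and only if $\phi_{x}$ is: if $H_{x}$ is completely reducible and $P\supset\phi_{x}$ is parabolic, then $P\supset H_{x}$, so some Levi $L$ of $P$ contains $H_{x}\supset\phi_{x}$; conversely, if $\phi_{x}$ is completely reducible and $P\supset H_{x}$ is parabolic, then $P\supset\phi_{x}$, so some Levi $L$ of $P$ contains $\phi_{x}$, and writing $L=P(\mu)\cap P(\mu^{-1})$ with $P(\mu)=P$ one gets $\phi_{x}\subset P(\mu)$ and $\phi_{x}\subset P(\mu^{-1})$, hence $\mu,\mu^{-1}\in\Lambda_{x}$, hence $H_{x}\subset P(\mu)\cap P(\mu^{-1})=L$. (For $G$ not connected this is run with the conventions for parabolics, Levi factors and 1PS fixed in Section~2.) Finally, the action being conjugation, the stabilizer is the centralizer: $G_{x}=Z_{G}(\phi_{x})$.

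Assembling: for part~(1), $x$ is polystable $\iff H_{x}$ is completely reducible (Theorem~\ref{thm:Main}(1)) $\iff\phi_{x}$ is completely reducible; and, invoking Theorem~\ref{thm:Main}(2) (whose hypothesis $G_{X}=Z(G)$ holds in the cases of interest, and under which the various stability notions of the introduction coincide), $x$ is stable $\iff H_{x}=G\iff\phi_{x}$ is irreducible. For part~(2), when $G_{X}=Z(G)$ one has: $x$ equicentral $\iff$ [$x$ polystable and $G_{x}=Z(G)$] $\iff$ [$\phi_{x}$ completely reducible and $Z_{G}(\phi_{x})=Z(G)$] $\iff\phi_{x}$ isotropic, using the first equivalence of part~(1), the identification $G_{x}=Z_{G}(\phi_{x})$, and the very definition of ``isotropic''.

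The hard analytic content -- the strengthened Hilbert--Mumford criterion of Kempf and Birkes/Richardson, and the Mundet--Schmitt-type symmetry argument -- is already packaged into Theorems~\ref{thm:M-S} and~\ref{thm:Main}, so the only point in the present proof requiring care is the group-theoretic lemma of the second paragraph: the description of $H_{x}$ as the intersection of all parabolics containing $\phi_{x}$, and the equivalence between complete reducibility of $H_{x}$ and of $\phi_{x}$, carried out uniformly for possibly disconnected reductive $G$. Everything else is bookkeeping on top of the previously established theorems.
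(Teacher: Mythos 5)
Your treatment of the polystability and equicentrality statements follows the paper's route essentially verbatim: the identification $\Lambda_{x}=\{\lambda\in Y(G):\phi_{x}\subset P(\lambda)\}$, the inclusion $\phi_{x}\subset H_{x}$, the resulting properness of $\eta$, the equivalence of complete reducibility for $H_{x}$ and $\phi_{x}$, and the identity $G_{x}=Z_{G}(\phi_{x})$ are exactly Propositions \ref{etaisproper-1}, \ref{pro:CRrep-1} and \ref{centralizer}, and combining them with Theorem \ref{thm:Main}(1) is how the paper proves part (1) for polystability and part (2). That portion of your argument is fine.

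The gap is in the stability half of part (1). You derive ``$x$ stable $\iff H_{x}=G$'' from Theorem \ref{thm:Main}(2), whose hypothesis is $G_{X}=Z(G)$, and you dismiss this with ``holds in the cases of interest.'' But part (1) of Theorem \ref{thm:Main-Rich} carries no centrality hypothesis, and the paper explicitly warns in the Introduction that $G_{X}\neq Z(G)$ in general for closed $G$-invariant subvarieties of $G^{N}$ (e.g.\ $G_{X}$ can be all of $G$ when every coordinate $f_{i}(x)$ is central for every $x\in X$). The paper's proof avoids this assumption by splitting the two implications. For ``$x$ stable $\Rightarrow\phi_{x}$ irreducible'' it uses only the inclusion $Z(G)\subset G_{X}$ (always true here, by Proposition \ref{centralizer}) via Theorem \ref{thm:1PSstableIrred}(1), not the biconditional of part (2). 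For the converse ``$\phi_{x}$ irreducible $\Rightarrow x$ stable'' it argues directly: irreducibility gives complete reducibility, hence $x$ is polystable, hence $G_{x}=Z_{G}(\phi_{x})$ is reductive by Richardson; if $G_{x}/G_{X}$ were infinite then $G_{x}/Z(G)$ would be infinite (since $Z(G)\subset G_{X}$), so there would exist $\lambda\in Y(G_{x})$ with $\lambda(k^{\times})\nsubseteq Z(G)$, and then $\phi_{x}\subset Z_{G}(\lambda(k^{\times}))=L(\lambda)\subset P(\lambda)$ would sit inside a proper R-parabolic, contradicting irreducibility. Your proposal contains neither of these two arguments, so as written it only proves the stability equivalence under the extra assumption $G_{X}=Z(G)$; you need to supply the Richardson-reductivity argument (and replace your appeal to Theorem \ref{thm:Main}(2) by Theorem \ref{thm:1PSstableIrred}(1)) to match the stated generality.
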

This result also generalizes to the algebraic group setting the recent
result of Sikora (\cite[Thm. 29, Cor. 31]{Sikora}), as representation
varieties over $\mathbb{C}$ of finitely generated groups are a particular
class of closed $G$-invariant subvarieties of $G^{N}$. As a corollary
of Theorems \ref{thm:Main} and \ref{thm:Main-Rich}, we can write
the relation between $H_{x}$ and $\phi_{x}$ as follows:
\begin{thm}
\label{thm:Hx-phix}Let $X$ be a closed $G$-invariant subvariety
of $G^{N}$ as above. Then $H_{x}$ is completely reducible if and
only if $\phi_{x}$ is completely reducible. Moreover, assuming $G_{X}=Z(G)$,
then $H_{x}=G$ if and only if $\phi_{x}$ is irreducible.
\end{thm}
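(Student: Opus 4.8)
The plan is to obtain the statement purely by chaining the two main results, Theorems \ref{thm:Main} and \ref{thm:Main-Rich}; the only substantive point is to check that Theorem \ref{thm:Main} applies to a closed $G$-invariant subvariety $X\subset G^{N}$ with the diagonal conjugation action, that is, that condition (\ref{eq:proper}) holds for such $X$.

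First I would make explicit the form of $\Lambda_{x}$ in this case. Writing $x=(x_{1},\dots,x_{N})\in X\subset G^{N}$, the action reads $\lambda(t)\cdot x=(\lambda(t)x_{1}\lambda(t)^{-1},\dots,\lambda(t)x_{N}\lambda(t)^{-1})$, and a morphism $k^{\times}\to G^{N}$ extends to $k$ precisely when each of its components does; hence $\lim_{t\to 0}\lambda(t)\cdot x$ exists if and only if $\lim_{t\to 0}\lambda(t)x_{i}\lambda(t)^{-1}$ exists for every $i$, i.e.\ if and only if $x_{i}\in P(\lambda)$ for all $i$. Since $\phi_{x}$ is generated by the $x_{i}$ and $P(\lambda)$ is a subgroup, this is equivalent to $\phi_{x}\subset P(\lambda)$. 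Therefore $\Lambda_{x}=\{\lambda\in Y(G):\phi_{x}\subset P(\lambda)\}$, and in particular $\phi_{x}\subset\bigcap_{\lambda\in\Lambda_{x}}P(\lambda)=H_{x}$. Consequently, if $H_{x}\subset P(\lambda)$ then $\phi_{x}\subset P(\lambda)$, so $\lambda\in\Lambda_{x}$, which is exactly condition (\ref{eq:proper}).

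With (\ref{eq:proper}) established, the first equivalence follows by combining ``$H_{x}$ completely reducible $\iff$ $x$ polystable'' (Theorem \ref{thm:Main}(1)) with ``$x$ polystable $\iff$ $\phi_{x}$ completely reducible'' (Theorem \ref{thm:Main-Rich}(1)). For the second, assume $G_{X}=Z(G)$; then ``$H_{x}=G\iff x$ stable'' (Theorem \ref{thm:Main}(2)) together with ``$x$ stable $\iff$ $\phi_{x}$ irreducible'' (Theorem \ref{thm:Main-Rich}(1)) gives the claim.

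Since both main theorems are already available, I expect no serious obstacle here; the one step that needs care is the verification of (\ref{eq:proper}) above, which rests on the characterization of $P(\lambda)$ through existence of conjugation limits and on the passage from a generating set to the subgroup $\phi_{x}$ it generates. It is worth noting in passing that the computation of $\Lambda_{x}$ also exhibits $H_{x}$ as the intersection of all parabolic subgroups of the form $P(\lambda)$ containing $\phi_{x}$, which makes the link between $H_{x}$ and $\phi_{x}$ quite transparent.
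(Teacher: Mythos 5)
Your proof is correct. The verification of condition (\ref{eq:proper}) is exactly the paper's Proposition \ref{etaisproper-1}: componentwise existence of the limit in $G^{N}$, closedness of $X$ to pull the limit back into $X$, and the passage from the generators $f_{i}(x)$ to $\phi_{x}\subset P(\lambda)$ via the subgroup property of $P(\lambda)$; and your treatment of the second equivalence (chaining Theorem \ref{thm:Main}(2) with Theorem \ref{thm:Main-Rich}) is the paper's route as well. The one place you genuinely diverge is the first equivalence: you derive ``$H_{x}$ completely reducible $\iff\phi_{x}$ completely reducible'' by passing through polystability twice, whereas the paper proves it directly (Proposition \ref{pro:CRrep-1}) by a purely group-theoretic comparison of $H_{x}$ and $\phi_{x}$ inside parabolics --- using $\phi_{x}\subset H_{x}$, the equivalence $\mu\in\Lambda_{x}\iff\phi_{x}\subset P(\mu)$, and the existence of an opposite R-parabolic cutting out a common R-Levi --- and only afterwards deduces ``$x$ polystable $\iff\phi_{x}$ completely reducible'' from it. Your chain is shorter given the two main theorems as black boxes, but it silently invokes the Kempf/Birkes machinery behind the polystability criterion; the paper's direct argument keeps the $H_{x}$--$\phi_{x}$ comparison independent of that machinery, which is why Proposition \ref{pro:CRrep-1} can then serve as an ingredient in the proof of Theorem \ref{thm:Main-Rich}(1) rather than a consequence of it. Both derivations are valid; yours is not circular because you take Theorems \ref{thm:Main} and \ref{thm:Main-Rich} as established, which is precisely how the introduction frames this statement.
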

Another consequence of Theorem \ref{thm:Main-Rich} is that we can
restate one form of the non-abelian Hodge theorem (see, for example
\cite{GGM}) as a relation between stability of $G$-Higgs bundles
over a Riemann surface and stability of representations in the character
variety of its fundamental group, in a framework also valid for reductive
groups which are not necessarily connected.

The article is organized as follows. In section 2, after introducing
the basic definitions, we describe some properties of irreducible
and completely reducible subgroups of a given affine reductive group,
and in section 3, we provide the stability definitions we will need,
present relations between them and prove the relevant versions of
the Hilbert-Mumford numerical criterion. In section 4, we prove our
main results, and re-derive the Mundet-Schmitt criterion for polystability.
We also analyse condition (\ref{eq:proper}) showing that it holds
for the adjoint representation of $G$ (in the complex case). In section
5, we apply our results to the setting of closed subvarieties of $G^{N}$
and of $G$-representation varieties of finitely presented groups,
adding to the work of Richardson and Sikora alluded above. We also
briefly recall $G$-Higgs bundles theory (in the complex case) in
order to present a restatement of the non-abelian Hodge theorem.

In the appendix, we collect some results that are useful in comparing
the distinct notions of irreducibility and related properties that
appear in the literature.

\section{Irreducible and Completely Reducible Subgroups}

We start by recalling some important definitions and properties of
the main objects considered, and by fixing terminology.

Let $G$ be an affine algebraic group (not necessarily irreducible)
over an algebraically closed field $k$ of characteristic zero, and
let $G^{0}$ be the connected component of the unit element of $G$.
The unipotent radical of $G$, denoted by $R_{u}(G)$, is the maximal
connected unipotent normal subgroup of $G$. 

An affine algebraic group $G$ is called \emph{reductive} if the unipotent
radical of $G^{0}$ is the trivial group%
\footnote{Note that, contrary to many references, we do not assume that a reductive
group is itself connected.%
}. In the representation theory of reductive groups, parabolic and
Levi subgroups play an important role. Recall that a \emph{parabolic
subgroup} $P$ of $G$ is a closed subgroup such that the coset space
$G/P$ is a complete variety and a \emph{Levi subgroup} of $G$ is
a connected subgroup isomorphic to $G/R_{u}(G)$.

Note that, by definition, any parabolic subgroup $P$ of $G$ is identified
with the semidirect product $P=L\ltimes R_{u}(P)$, where $L$ is
a Levi subgroup of $P$.

We will use the notation $Z(H)$ for the center of a group $H$, and
when $H$ is a subgroup of $G$, $Z_{G}(H)$ will denote the centralizer
of $H$ in $G$.

\subsection{Parabolic and one parameter subgroups}

In practice, we need another characterization of parabolic subgroups
for which we recall the definition of one-parameter subgroup. Let
$k^{\times}$ be the multiplicative group of invertible elements of
the field $k$. 

A morphism of algebraic groups $\lambda:k^{\times}\to G$ is called
a one parameter subgroup (1PS), or cocharacter of $G$. The trivial
1PS is the morphism given by $\lambda(t)=e,\ \forall t\in k^{\times}$,
where $e\in G$ is the identity element. The set of cocharacters of
$G$ will be denoted by $Y(G)$.

%
\begin{comment}
A morphism of algebraic groups $\chi:G\rightarrow k^{\times}$ is
called a character of $G$, and a morphism $\lambda:\bar{k}^{\times}\rightarrow G$
is called cocharacter or $1$-parameter group ($1$-PS) of $G$. The
set of characters and cocharacters of $G$ will be denoted by $X(G)$
and by $Y(G)$, respectively, and both are abelian groups. By $X(G)_{k}$
and $Y(G)_{k}$ we denote the characters defined over $k$. There
is a map $X(G)\times Y(G)\rightarrow X(\bar{k}^{\times})=\mathbb{Z}$
given by $\langle\chi,\lambda\rangle=m$, where $(\chi\circ\lambda)(x)=x^{m}$,
as a morphism from $GL(1,\bar{k})=\bar{k}^{\times}$ to itself.We
define an action of $G$ on $Y(G)$ and denote by $g.\lambda$ where
$(g.\lambda)(t)=\lambda(t)g\lambda(t)^{-1}$, for every $g\in G$
and $\lambda\in Y(G)$.
\end{comment}
{}
\begin{defn}
\label{def:rays}Given a 1PS $\lambda\in Y(G)$ and an element $g\in G$,
consider the morphism $\lambda_{g}:k^{\times}\to G$, defined by \[
\lambda_{g}(t):=\lambda(t)g\lambda(t)^{-1},\quad t\in k^{\times}.\]
We say that $\lim_{t\to0}\lambda_{g}(t)$ exists when $\lambda_{g}$
can be extended to a morphism $\tilde{\lambda}_{g}:k\to G$ (that
is, we have $\lambda_{g}=\tilde{\lambda}_{g}\circ i$, for the obvious
inclusion $i:k^{\times}\to k$). In this case we write \[
\lambda^{+}g=\lim_{t\to0}\lambda_{g}(t)\]
for the element $\tilde{\lambda}_{g}(0)\in G$, which is uniquely
defined. 
\end{defn}
We follow the convention that, whenever a formula with limits is written,
we are assuming that they exist.
\begin{defn}
\label{def:parab}Given a 1PS $\lambda\in Y(G)$, define the following
subsets of $G$:\begin{align*}
P(\lambda) & :=\left\{ g\in G:\ \lambda^{+}g\mbox{ exists }\right\} \\
U(\lambda) & :=\left\{ g\in G:\ \lambda^{+}g=e\right\} \\
L(\lambda) & :=\left\{ g\in G:\ \lambda^{+}g=g\right\} .\end{align*}
We call $P(\lambda)$ a \emph{R-parabolic} subgroup of $G$, $U(\lambda)$
a \emph{R-unipotent} subgroup of $P(\lambda)$ and $L(\lambda)$ a
\emph{R-Levi} subgroup of $P(\lambda)$.
\end{defn}
The terminology R-parabolic and R-Levi was introduced in \cite{BMR}
referring to its relevance in the work of Richardson. For a 1PS $\lambda\in Y(G),$
$\lambda(k^{\times})$ denotes the subgroup $\lambda(k^{\times}):=\{\lambda(t):t\in k^{\times}\}\subset G$.
When $G$ is reductive, the following is well known.
\begin{prop}
Let $G$ be an affine reductive group and let $\lambda\in Y(G)$.
Then,

(i) $P(\lambda)$ is a parabolic subgroup of $G$, $L(\lambda)$ is
a Levi subgroup of $P(\lambda)$, and $U(\lambda)$ is the unipotent
radical of $P(\lambda)$; in particular $P(\lambda)=L(\lambda)\ltimes U(\lambda)$.

(ii) $L(\lambda)$ coincides with the centralizer of the subgroup
$\lambda(k^{\times})$.

(iii) In the case that $G$ is connected, then all parabolic subgroups
of $G$ are $R$-parabolic subgroups.\end{prop}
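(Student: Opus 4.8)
The plan is to prove the three parts of this standard proposition by reducing to well-understood facts about maximal tori and Borel subgroups, handling the possibly disconnected case by working on $G^0$ first and then accounting for the component group.

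For part (i), I would first reduce to the connected case. The image $\lambda(k^\times)$ is a torus, hence connected, so it lies in $G^0$; choose a maximal torus $T$ of $G^0$ containing $\lambda(k^\times)$. Working inside the reductive connected group $G^0$, the subsets $P(\lambda)\cap G^0$, $U(\lambda)\cap G^0$, $L(\lambda)\cap G^0$ are, by the root-space description relative to $T$, respectively the parabolic $P$ spanned by $T$ together with the root groups $U_\alpha$ with $\langle\alpha,\lambda\rangle\ge 0$, its unipotent radical (roots with $\langle\alpha,\lambda\rangle>0$), and the Levi $Z_{G^0}(\lambda(k^\times))$ (roots with $\langle\alpha,\lambda\rangle=0$); this is the classical computation, e.g. in Springer or Borel. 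To pass to $G$, one observes that $\lambda$ is defined on all of $G$ and that conjugation by $\lambda(t)$ extends $G$-equivariantly, so $P(\lambda)$ is a closed subgroup meeting $G^0$ in a parabolic; since $P(\lambda)\supset G^0\cap P(\lambda)$ which already has complete quotient $G^0/(P(\lambda)\cap G^0)$, and $G/G^0$ is finite, $G/P(\lambda)$ is complete, so $P(\lambda)$ is parabolic in the sense used in the paper. Similarly $U(\lambda)$ is connected (being contained in $G^0$, as unipotent elements lie in the connected component, and being the limit-$e$ set it is exactly $R_u(P(\lambda)\cap G^0)=R_u(P(\lambda))$), and $L(\lambda)$ is connected and maps isomorphically to $P(\lambda)/U(\lambda)=P(\lambda)/R_u(P(\lambda))$, giving the semidirect product decomposition $P(\lambda)=L(\lambda)\ltimes U(\lambda)$.

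For part (ii), the inclusion $L(\lambda)\supseteq Z_G(\lambda(k^\times))$ is immediate: if $g$ commutes with every $\lambda(t)$ then $\lambda_g(t)=g$ for all $t$, so $\lambda^+g=g$. Conversely, if $\lambda^+g=g$, then the morphism $\lambda_g\colon k^\times\to G$ extends to $k$ and also, applying the same reasoning to $\lambda^{-1}$ (i.e. $t\mapsto\lambda(t^{-1})g\lambda(t^{-1})^{-1}$, whose limit at $0$ corresponds to the limit of $\lambda_g$ at $\infty$), one sees $\lambda_g$ extends to a morphism $\mathbb{P}^1\to G$; since $\mathbb{P}^1$ is complete and $G$ is affine, this morphism is constant, so $\lambda_g(t)=\lambda_g(1)=g$ for all $t$, i.e. $g\in Z_G(\lambda(k^\times))$. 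This argument is characteristic-free and does not use connectedness.

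For part (iii), when $G$ is connected I would invoke the standard structure theory: every parabolic subgroup $P$ of a connected reductive group contains a maximal torus $T$, and relative to $T$ the parabolic is determined by a subset of the roots closed under addition and containing, for each root $\alpha$, at least one of $\pm\alpha$; such a subset is cut out by the sign of $\langle\cdot,\lambda\rangle$ for a suitable $\lambda\in Y(T)$ lying in the appropriate face of the Weyl chamber (one may take $\lambda$ in the relative interior of the face corresponding to $P$). Then $P=P(\lambda)$ by the root-group description recalled in part (i). The main obstacle — really the only non-formal point — is the bookkeeping in the disconnected case for part (i): making sure that "parabolic" in the paper's sense (complete quotient) is correctly matched with the connected-case statement, and that $U(\lambda)$ and $L(\lambda)$ genuinely land in $G^0$ and behave as the unipotent radical and a Levi of $P(\lambda)$; once one notes that unipotent elements and tori are connected, hence in $G^0$, this reduces cleanly to the classical connected theory.
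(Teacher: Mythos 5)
The paper does not actually prove this proposition; it simply cites \cite{MumFoKir} and \cite[Prop.\ 8.4.5, Thm.\ 13.4.2]{Springer}. Your outline reconstructs essentially the argument contained in those references (root-space description of $P(\lambda)$, $U(\lambda)$, $L(\lambda)$ relative to a maximal torus of $G^{0}$ containing $\lambda(k^{\times})$, plus the reduction from $G$ to $G^{0}$ via finiteness of $G/G^{0}$), and parts (i) and (iii) are sound modulo routine bookkeeping. One small circularity worth fixing in (i): you place $U(\lambda)$ inside $G^{0}$ "because unipotent elements lie in the connected component," but that $U(\lambda)$ consists of unipotent elements is part of what is being proved. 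A clean substitute: for $g\in P(\lambda)$ the extended morphism $\tilde{\lambda}_{g}:k\to G$ has connected image contained in the coset $gG^{0}$ (since $\lambda(t)g\lambda(t)^{-1}\in gG^{0}$), so if $\lambda^{+}g=e$ then $gG^{0}=G^{0}$ and $g\in G^{0}$; alternatively, embed $G$ in some $GL(V)$ and read off unipotence from the block-triangular form in a $\lambda$-weight basis.

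The genuine gap is in part (ii). Your completeness argument needs $\lambda_{g}$ to extend to all of $\mathbb{P}^{1}$, i.e.\ you need the limit as $t\to\infty$ (equivalently $(\lambda^{-1})^{+}g$) to exist; but the hypothesis is only $\lambda^{+}g=g$, and "applying the same reasoning to $\lambda^{-1}$" has no hypothesis to apply it to. Asserting that the limit at infinity exists for $g\in L(\lambda)$ amounts to assuming $L(\lambda)\subset P(\lambda^{-1})$, which is essentially the conclusion you are after. The standard repair avoids $\mathbb{P}^{1}$ entirely: for any $s\in k^{\times}$,
\[
\lambda(s)\bigl(\lambda^{+}g\bigr)\lambda(s)^{-1}=\lim_{t\to0}\lambda(st)\,g\,\lambda(st)^{-1}=\lambda^{+}g,
\]
so $\lambda^{+}g$ is automatically centralized by $\lambda(k^{\times})$; if $\lambda^{+}g=g$ this gives $g\in Z_{G}(\lambda(k^{\times}))$ directly. (This rescaling identity appears, in commented-out form, as Lemma \verb|lem:Limit| in the paper's source.) With that substitution part (ii) is correct and, as you note, independent of connectedness.
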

\begin{proof}
See \cite{MumFoKir} or \cite[Prop. 8.4.5 and Thm. 13.4.2]{Springer}. 
\end{proof}
Note that $P(\lambda)=G$ if and only if $\lambda(k^{\times})$ is
contained in the center of $G$, $Z(G)$. To rule out this trivial
case, the parabolics $P(\lambda)\neq G$ will be called proper R-parabolics.

{}

\subsection{\label{sub:Irreducible-and-completely}Irreducible and completely
reducible subgroups}

Assume, from now on that $G$ is an affine reductive group. 
\begin{defn}
Let $H$ be a subgroup of $G$.
\begin{enumerate}
\item We say that $H\subset G$ is \emph{irreducible }(in\emph{ }$G$) if
$H$ is not contained in any proper $R$-parabolic subgroup of $G$.
\item We say that $H\subset G$ is \emph{completely reducible} (in\emph{
}$G$) if for every R-parabolic subgroup $P$ which contains $H$
there is a R-Levi subgroup $L$ of $P$, such that $H\subset L$.
\end{enumerate}
\end{defn}
Note that in particular, an irreducible subgroup is completely reducible.
These definitions are given in \cite[§6]{BMR} and coincide with the
original definition of Serre (\cite{Serre}), when $G$ is connected;
they were introduced to generalize standard methods in the representation
theory of $GL(V)$ to arbitrary reductive algebraic groups. 

%
\begin{comment}
In the definition of irreducible subgroup, saying that $G_{k}\nsubseteq P(\lambda)$,
for $\lambda\in Y(G)_{k}$ is equivalent to say that $\lambda(\bar{k}^{\times})\nsubseteq Z(G_{k}).$
Suppose that $G_{k}\subset P(\lambda)$ then as $G_{k}$ is reductive
we have that $G_{k}\subset L(\lambda)$ which implies that $\lambda(\bar{k}^{\times})\subset Z(G_{k})$.
The other implication is easy to prove.
\end{comment}
{}
\begin{rem}
\label{rem:closure}Observe that a subgroup $H\subset G$ is irreducible
(resp. completely reducible) if and only if $\overline{H}$, its Zariski
closure in $G$, is irreducible (resp. completely reducible). Indeed,
this easily follows from the fact that any R-parabolic subgroup of
$G$ is Zariski closed in $G$.
\end{rem}
Here, and except when explicitly mentioned otherwise, all topological
notions on algebraic varieties will refer to the Zariski topology.

\section{Stability Notions for Affine $G$-varieties}

In this section we study the natural notions of (poly)stability that
turn out to be most useful for the statement of Theorem \ref{thm:Main}.
Here, $X$ denotes a $G$-variety and $G$ an affine reductive group
(neither of which is assumed to be irreducible).

\subsection{\label{sub:Stability,proper-stability}Stability, proper stability
and polystability}

The structure of $G$-variety on $X$ assumes the existence of a morphism
satisfying the usual axioms for an action\[
\psi:G\times X\to X\]
and will be denoted by $(g,x)\mapsto g\cdot x$, ($g\in G$, $x\in X$)
where no confusion arises. The orbit space $X/G$ is generally not
an algebraic set. However, since $G$ is reductive, there exists a
categorical quotient $X\quot G$ which is an affine algebraic variety.
In fact, this quotient is also the so-called \emph{affine quotient}
defined as the spectrum of the ring $k[X]^{G}$ of regular $G$-invariants
inside the coordinate ring $k[X]$. It can also be shown that the
affine quotient parametrizes the set of closed $G$-orbits in $X$.%
\begin{comment}
The character variety is not isomorphic to the orbit space. In order
to have this property we introduce the notion of stable representations.
\end{comment}
{}

For a given point $x\in X$, denote by $O_{x}$ or by $G\cdot x$
its $G$-orbit, and by $G_{x}$ its stabilizer in $G$. 
\begin{defn}
The closed subgroup $G_{X}:=\bigcap_{x\in X}G_{x}\subset G$ will
be called the \emph{center of the action} of $G$ on $X$, or simply
the \emph{center of $X$} when the action is understood. When $G_{X}$
is not trivial, it makes sense to consider the \emph{normalized stabilizer},
defined as $NG_{x}:=G_{x}/G_{X}$.\end{defn}
\begin{rem}
Note that for any $G$-variety, $G_{X}$ is a normal subgroup of $G$.
Indeed, for any $x\in X$ the stabilizers verify $gG_{x}g^{-1}=G_{g\cdot x}$
for all $g\in G$.
\end{rem}
Let $\hat{G}=G/G_{X}$ denote the space of (left) cosets of $G_{X}$,
which is again an affine reductive group (see Borel \cite[6.8 Theorem]{Bo}).
Then, the $G$-variety $X$ has also the structure of a $\hat{G}$-variety,
since the action $\psi$ factors through a map (henceforth called
the \emph{reduced action})\[
\hat{\psi}:\hat{G}\times X\to X\]
defined by $(gG_{X},x)\mapsto g\cdot x$ (well defined since $G_{X}$
is normal). 

It is clear that, as affine algebraic varieties, both quotients $X\quot G$
and $X\quot\hat{G}$ are isomorphic. This idea has been explored in
affine GIT, and because of this, one usually considers the reduced
action $\hat{\psi}$. 

In contrast, in this article, we want to consider the original action
$\psi$ because it turns out to be more adapted to relate with the
concept of irreducibility in reductive groups. 

More abstractly, this construction can be seen as a correspondence
from the category of $G$-spaces to the category of $G/G_{X}$-spaces,
and we want to understand how stability behaves under this correspondence.
For this, we will use the following definitions of stability closely
related to the original definitions given in \cite{MumFoKir}. Recall
that we are using the Zariski topology.
\begin{defn}
\label{def:polystab}Let $X$ be a $G$-variety and $x$ be a point
in $X$.
\begin{enumerate}
\item We say that $x$ is \emph{polystable} if $O_{x}=G\cdot x$ is closed;
\item We say that $x$ is \emph{properly stable} if it is polystable and
$G_{x}$ is finite; 
\item We say that $x$ is\emph{ stable} if it is polystable and $NG_{x}$
is finite;
\item We say that $x$ is \emph{equicentral} if it is polystable and $NG_{x}$
is trivial, that is $G_{x}=G_{X}$.
\end{enumerate}
\end{defn}
\begin{rem}
As far as we known, the definition of stability in (3) was introduced
by Richardson (\cite{Rich}) and coincides with proper stability (as
defined by Mumford), when $G_{X}$ is finite. Note also that, if either
$x\in X$ is properly stable, or $x$ is equicentral, then $x$ is
stable.
\end{rem}
Let $X$ be an affine $G$-variety, and consider, for all $x\in X$,
the orbit morphism, defined as \[
\psi_{x}:G\to X,\quad g\mapsto g\cdot x.\]
It is clear that $\psi_{x}$ is an affine morphism and that its image
$\psi_{x}(G)$ coincides with the orbit of $x$, $G\cdot x\subset X$.
In the same way as proper stability is equivalent to properness of
the orbit morphism (see Newstead \cite[Lemma 3.17]{N}), stability
is equivalent to properness of the reduced orbit morphism.
\begin{prop}
\label{pro:stableproper}Let $X$ be an affine $G$-variety. Then,
$x\in X$ is stable if and only if the reduced orbit morphism $\hat{\psi}_{x}:G/G_{X}\to X$,
$\hat{\psi}_{x}(gG_{X}):=g\cdot x$ is proper.\end{prop}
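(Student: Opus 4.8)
The plan is to reduce the statement to the classical equivalence between proper stability and properness of the orbit morphism, applied to a modified group action. Specifically, I would pass from the $G$-variety $X$ to the $\hat{G}$-variety $X$ equipped with the reduced action $\hat\psi$, where $\hat{G}=G/G_X$. The key observation is that the center of the reduced action is trivial: $\hat{G}_X = \bigcap_{x\in X} \hat{G}_x = \{eG_X\}$, since $\hat{G}_x = G_x/G_X$ and $\bigcap_x G_x = G_X$. Therefore, for the $\hat{G}$-action, the notions of stability, proper stability, and equicentrality all collapse, and Definition \ref{def:polystab}(3) for the original $G$-action translates exactly into: $x$ is polystable and $\hat{G}_x = NG_x$ is finite, i.e., $x$ is properly stable with respect to the $\hat{G}$-action.

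First I would make precise that $x\in X$ is stable (in the sense of Definition \ref{def:polystab}(3) for $\psi$) if and only if $x$ is properly stable for the reduced action $\hat\psi$ of $\hat{G}$. Indeed, polystability of $x$ is a property of the orbit $O_x = G\cdot x = \hat{G}\cdot x$, which is the same set regardless of whether we use $\psi$ or $\hat\psi$; and finiteness of $NG_x = G_x/G_X$ is precisely finiteness of the stabilizer $\hat{G}_x$ of $x$ under $\hat\psi$. Next I would invoke the classical result that, for an affine $H$-variety $X$ with $H$ reductive (here $H=\hat{G}$, which is reductive by Borel \cite[6.8]{Bo}), a point $x$ is properly stable if and only if the orbit morphism $\psi_x^H: H\to X$, $h\mapsto h\cdot x$, is proper — this is Newstead \cite[Lemma 3.17]{N}, cited in the text. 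Finally, I would identify this orbit morphism for $H=\hat{G}$ with the reduced orbit morphism $\hat\psi_x: G/G_X\to X$ from the statement, since by definition $\hat\psi_x(gG_X) = g\cdot x$ is exactly the action of the class $gG_X\in\hat{G}$ on $x$. Chaining these equivalences yields the proposition.

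The one point that needs a little care — and which I expect to be the main obstacle — is the applicability of Newstead's lemma in this generality: it must be verified that the cited equivalence (proper stability $\Leftrightarrow$ properness of the orbit morphism) holds for a possibly non-connected reductive group $\hat{G}$ over an algebraically closed field of characteristic zero, rather than only in the classical setting over $\mathbb{C}$ with $\hat{G}$ connected. Since $\hat{G}$ has finitely many connected components and the orbit morphism factors $\hat{G}$-equivariantly, one reduces properness to the connected-component case by a finite base change, or one simply notes that the standard proof (properness is equivalent to the orbit being closed and the stabilizer being finite, using that a finite-fibered morphism with closed image from a variety to an affine variety is proper) goes through verbatim. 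I would include a one-line remark to this effect rather than a full argument, since the reduction is routine and the underlying facts — closedness of the orbit characterizing polystability, and the behavior of properness under the quotient $G\to G/G_X$ — have already been set up in the section.

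Thus the proof is essentially a translation: stability for $(G,\psi)$ equals proper stability for $(\hat{G},\hat\psi)$, and the latter equals properness of $\hat\psi_x$ by the cited lemma. I would present it in this three-step form, keeping the verification of the non-connected case of Newstead's lemma as a brief remark.
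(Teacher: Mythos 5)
Your proposal is correct and takes essentially the same approach as the paper: the paper likewise identifies $\hat{\psi}_{x}$ with the orbit morphism of the reduced $\hat{G}$-action, whose stabilizer at $x$ is $NG_{x}=G_{x}/G_{X}$, and invokes Newstead's Lemma 3.17. The only difference is cosmetic --- the paper unfolds the classical equivalence in place (showing the image of $\hat{\psi}_{x}$ is the closed orbit, that a proper affine morphism is finite with finite fibers, and conversely) rather than citing it wholesale for $\hat{G}$, which also implicitly covers the non-connectedness point you flag.
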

\begin{proof}
It is clear that $\hat{\psi}_{x}$ is well defined. If $\hat{\psi}_{x}$
is proper, then the image of $\hat{\psi}_{x}$ is closed. First we
show that $\hat{\psi}_{x}(G/G_{X})=G\cdot x$, the inclusion $\hat{\psi}_{x}(G/G_{X})\subset G\cdot x$
being clear. Note that $\hat{\psi}_{x}$ factors through the canonical
projection $\pi_{x}:G/G_{X}\rightarrow G/G_{x}$ so that we also have
$G\cdot x=(G/G_{x})\cdot x=\hat{\psi}_{x}(G/G_{x})\subset\hat{\psi}_{x}(G/G_{X})$.
So the image of $\hat{\psi}_{x}$ coincides with the orbit through
$x$, and we have concluded that the orbit is closed. Moreover, since
every proper map is a finite morphism onto its image, $\hat{\psi}_{x}:G/G_{X}\to G\cdot x$
is a finite morphism. Its fibers are the stabilizer of the reduced
action, that is, $(G/G_{X})_{x}$. This means that $(G/G_{X})_{x}=G_{x}/G_{X}$
is finite, and we have concluded that $x$ is stable. 

Conversely, if $G\cdot x$ is closed and $G_{x}/G_{X}$ is finite,
then the induced morphism $\hat{\psi}_{x}:G/G_{X}\to G\cdot x$ is
finite, because it has finite fibers (see \cite[Lemma 3.17]{N}),
and therefore it is proper.
\end{proof}

\subsection{The Numerical Criterion}

The following notions extend, to any $G$-variety $X$, the definitions
given before for the action of $G$ on itself under conjugation (Definition
\ref{def:rays}).
\begin{defn}
Given a 1PS $\lambda\in Y(G)$ and a point $x\in X$, consider the
morphism $\lambda_{x}:k^{\times}\to X$, defined by \[
\lambda_{x}(t):=\lambda(t)\cdot x,\quad t\in k^{\times}.\]
It will be called the \emph{$\lambda$-ray through $x$}. We say that
$\lim_{t\to0}\lambda_{x}(t)$ exists when $\lambda_{x}$ can be extended
to a morphism $\bar{\lambda}_{x}:k\to X$ , and write \[
\lambda^{+}x=\lim_{t\to0}\lambda_{x}(t)\]
for the unique element $\bar{\lambda}_{x}(0)\in X$. As before, whenever
a formula with limits is written, we are assuming that they exist.
\end{defn}
Consider the natural map $\pi_{*}:Y(G)\to Y(G/G_{X})$ defined by
composition with the canonical projection $\pi:G\to G/G_{X}$. The
following are easily deduced properties of $\lambda$-rays.
\begin{prop}
\label{pro:rays} Let $X,Y$ be affine $G$-varieties.

(i) Let $f:X\to Y$ be a $G$-morphism (ie, a morphism that is $G$-equivariant),
$x\in X$, and $\lambda\in Y(G)$. Then, the $\lambda$-rays in $X$
and $Y$ (through $x$ and $f(x)$, respectively) are related by $\lambda_{f(x)}=f\circ\lambda_{x}$.

(ii) If $f:X\to Y$ is an inclusion of affine $G$-varieties, $\lambda^{+}x$
exists in $X$ if and only if $\lambda^{+}(f(x))$ exists in $Y$.

(iii) We can define a $\lambda$-ray through $x$, for each $\lambda\in Y(G/G_{X})$,
since $G_{X}\subset G_{x}$ for all $x\in X$. Then, $\mu^{+}x$ exists,
$\mu\in Y(G)$, if and only if $(\pi_{*}\mu)^{+}x$ exists.
\end{prop}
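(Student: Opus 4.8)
The statement to prove is Proposition \ref{pro:rays}, recording three functoriality properties of $\lambda$-rays. All three parts are of the "this follows by chasing definitions and using that morphisms of affine varieties compose/extend" type, so the plan is to reduce each claim to the defining universal-type property of the limit (namely: $\lambda^{+}x$ exists iff the morphism $\lambda_{x}\colon k^{\times}\to X$ extends across the origin to a morphism $k\to X$).

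For part (i), I would simply compute: for $t\in k^{\times}$,
\[
\lambda_{f(x)}(t)=\lambda(t)\cdot f(x)=f(\lambda(t)\cdot x)=f(\lambda_{x}(t)),
\]
where the middle equality is $G$-equivariance of $f$. Thus $\lambda_{f(x)}=f\circ\lambda_{x}$ as morphisms $k^{\times}\to Y$, which is the assertion. For part (ii), suppose $f\colon X\hookrightarrow Y$ is a $G$-equivariant closed immersion of affine varieties. If $\lambda^{+}x$ exists in $X$, then $\lambda_{x}$ extends to $\bar\lambda_{x}\colon k\to X$, and $f\circ\bar\lambda_{x}\colon k\to Y$ extends $f\circ\lambda_{x}=\lambda_{f(x)}$ by part (i), so $\lambda^{+}(f(x))$ exists. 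Conversely, if $\lambda_{f(x)}$ extends to $\bar\lambda_{f(x)}\colon k\to Y$, then since $X$ is closed in $Y$ and $\bar\lambda_{f(x)}(k^{\times})=f(\lambda_{x}(k^{\times}))\subset f(X)$, the image of the whole extension lies in the closure $\overline{f(X)}=f(X)$; hence $\bar\lambda_{f(x)}$ factors through $f(X)\cong X$, giving the desired extension of $\lambda_{x}$. (This uses that $k$ is reduced and irreducible, so a morphism $k\to Y$ whose generic point lands in the closed set $f(X)$ lands entirely in $f(X)$.)

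For part (iii): first, $G_{X}\subset G_{x}$ holds by definition of $G_{X}=\bigcap_{y\in X}G_{y}$, so any $\mu\in Y(G/G_{X})$ acts on $x$ via a well-defined $\mu$-ray $\mu_{x}(t):=\tilde\mu(t)\cdot x$, where $\tilde\mu(t)$ is any lift. Then, given $\mu\in Y(G)$, the ray $(\pi_{*}\mu)_{x}$ equals $\mu_{x}$ as morphisms $k^{\times}\to X$, because $\pi_{*}\mu=\pi\circ\mu$ and the action of $G/G_{X}$ on $X$ is defined by lifting, so $(\pi_{*}\mu)(t)\cdot x=\mu(t)\cdot x$. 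Since the two rays literally coincide as morphisms $k^{\times}\to X$, one extends across $0$ iff the other does, which is the claim.

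I do not expect a genuine obstacle here; the only point requiring a small amount of care is part (ii), where one must justify that the extension $k\to Y$ of a ray landing in the closed subvariety $X$ actually factors through $X$ — this is where closedness of the immersion and irreducibility/reducedness of the affine line $k$ are used, rather than any deeper input. The rest is bookkeeping with the definitions of the action, the reduced action, and the limit.
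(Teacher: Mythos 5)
Your proof is correct; the paper itself omits the argument entirely, labelling these as ``easily deduced properties,'' and what you have written is precisely the routine verification intended. The one point that genuinely needs saying --- that in part (ii) the extension $k\to Y$ of a ray landing in the closed subvariety $f(X)$ must factor through $f(X)$, because $k^{\times}$ is dense in $k$ and $f(X)$ is closed --- you handle correctly, and your reading of ``inclusion'' as a closed immersion matches the paper's usage (e.g.\ for $X\subset G^{N}$).
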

%
\begin{comment}
We say that a 1-PS, $\lambda$, of $G$, is non central if $\lambda(\bar{k}^{\times})\nsubseteq Z_{G,X}$. 
\end{comment}
{}

Using these notions, the numerical criterion of Mumford (\cite{MumFoKir})
can be rephrased in the following way. 
\begin{thm}
\label{thm:Mumford-Crit}Let $X$ be a $G$-variety and $x\in X$.
Then, $x$ is properly stable for the $G/G_{X}$ action if and only
if for every non trivial 1PS $\lambda\in Y(G/G_{X})$, the limit $\lambda^{+}x$
does not exist.\end{thm}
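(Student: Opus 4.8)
The plan is to first unwind what ``properly stable for the $G/G_X$-action'' means, and then prove the two implications separately. By Definition \ref{def:polystab}(2) applied to the reduced action $\hat\psi$ of $\hat G := G/G_X$, the point $x$ is properly stable for the $G/G_X$-action exactly when $\hat G\cdot x = G\cdot x$ is closed and $\hat G_x = G_x/G_X$ is finite, i.e. exactly when $x$ is stable in the sense of Definition \ref{def:polystab}(3). I would stress at the outset why one must work with $Y(G/G_X)$ rather than $Y(G)$: a nontrivial $1$PS of $G$ with image inside $G_X$ (which may well exist) always satisfies $\lambda^+x = x$, so the naive analogue over $Y(G)$ would be false.

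For the ``only if'' direction I would argue directly and elementarily. Given $\lambda \in Y(\hat G)$ with $\lambda^+x =: y$ existing, the extended ray $\bar\lambda_x\colon k\to X$ has image contained in $\overline{G\cdot x}$, which equals $G\cdot x$ by polystability; hence $y = g\cdot x$ for some $g\in G$. The standard limit manipulation $\lambda(s)\cdot y = \lim_{t\to0}\lambda(st)\cdot x = y$, valid for all $s\in k^\times$, shows $\lambda(k^\times)\subseteq \hat G_y$, which is a conjugate of the finite group $\hat G_x$. Since $\lambda(k^\times)$ is connected and finite, it is trivial, so $\lambda$ is the trivial $1$PS; contrapositively, no nontrivial $1$PS of $G/G_X$ has a limit through $x$. (Alternatively one can deduce this from properness of $\hat\psi_x$ in Proposition \ref{pro:stableproper} via the valuative criterion, but the elementary argument is cleaner.)

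For the ``if'' direction, assuming that no nontrivial $\lambda\in Y(G/G_X)$ has $\lambda^+x$ existing, I would prove closedness of the orbit and finiteness of $\hat G_x$ in turn. For closedness: if $G\cdot x$ were not closed, $\overline{G\cdot x}$ would contain a closed orbit $G\cdot z$ disjoint from $G\cdot x$, and the strengthened Hilbert--Mumford criterion of Kempf and Birkes \cite{Kempf78,Birkes} would produce $\mu\in Y(G)$ with $\mu^+x\in G\cdot z$. Its image $\lambda := \pi_*\mu \in Y(G/G_X)$ has $\lambda^+x$ existing by Proposition \ref{pro:rays}(iii) and equal to $\mu^+x$ by definition of the reduced action; moreover $\lambda$ must be nontrivial, since $\lambda$ trivial would force $\mu(k^\times)\subseteq G_X\subseteq G_x$ and hence $\mu^+x = x\notin G\cdot z$, a contradiction. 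This contradicts the hypothesis, so $G\cdot x$ is closed. For finiteness: once $G\cdot x$ is closed it is an affine orbit isomorphic to $\hat G/\hat G_x$, so $\hat G_x = G_x/G_X$ is reductive by Matsushima's criterion; were it positive-dimensional it would contain a nontrivial torus, hence a nontrivial $1$PS $\lambda\in Y(\hat G_x)\subseteq Y(G/G_X)$ with $\lambda(t)\cdot x = x$ for all $t$, so $\lambda^+x = x$ would exist, again contradicting the hypothesis. Hence $\hat G_x$ is finite and $x$ is properly stable for the $G/G_X$-action.

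I expect the ``if'' direction to be the only real obstacle, and it is essentially absorbed into two imported results: the Kempf--Birkes strengthening of the Hilbert--Mumford criterion, which supplies a destabilizing $1$PS landing in the boundary closed orbit, and Matsushima's criterion, which upgrades ``positive-dimensional stabilizer of a closed orbit'' to ``contains a nontrivial torus, hence a nontrivial $1$PS''. Everything else --- translating between the $G$- and $\hat G$-pictures through the reduced action, and the limit computations --- is routine, with Proposition \ref{pro:rays} ensuring compatibility of $\lambda$-rays under $\pi_*$.
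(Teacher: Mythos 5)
Your proof is correct, and it takes a genuinely different route from the paper's: the paper disposes of this theorem in one line, as ``the direct application of Mumford's numerical criterion to the $G/G_{X}$ action'' with a citation to \cite{MumFoKir}, whereas you prove it from scratch. Your forward direction is the standard elementary computation showing that $\lambda(k^{\times})$ fixes the limit point and hence lies in a conjugate of the finite stabilizer $G_{x}/G_{X}$, forcing $\lambda$ trivial; your converse combines the Kempf--Birkes strengthening of the Hilbert--Mumford criterion (already recorded in the paper as Theorem \ref{thm:[Kempf]}) with Matsushima's criterion. Two points you make explicit are worth having and are invisible in the paper's citation: first, that the destabilizing $\mu\in Y(G)$ pushes forward to a \emph{nontrivial} 1PS of $G/G_{X}$ precisely because $\mu^{+}x$ lands outside $G\cdot x$ (so a trivial push-forward would force $\mu(k^{\times})\subseteq G_{X}\subseteq G_{x}$ and $\mu^{+}x=x$); second, that reductivity of the stabilizer of a point on a closed orbit is genuinely needed to extract a nontrivial 1PS from an infinite stabilizer --- a positive-dimensional unipotent stabilizer would admit no nontrivial cocharacter in characteristic zero, so Matsushima (or Richardson's \cite[1.3.3]{Rich}, which the paper invokes for the same purpose in the proof of Theorem \ref{thm:polyCRrep-1}(2)) cannot be dropped. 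What the paper's route buys is brevity and deference to standard GIT; what yours buys is a self-contained argument built from tools the paper already assembles, together with a transparent explanation of why one must pass to $Y(G/G_{X})$ rather than $Y(G)$.
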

\begin{proof}
This is the direct application of Mumford's numerical criterion to
the $G/G_{X}$ action, the reduced action on $X$ (See \cite{MumFoKir}).
\end{proof}
Obviously, given a 1PS $\lambda\in Y(G)$ contained in $G_{X}$ (that
is, such that $\lambda(k^{\times})\subseteq G_{X}$), the limit $\lambda^{+}x$
exists for all $x\in X$. To avoid these redundant one-parameter subgroups,
we introduce the following notion.
\begin{defn}
Let $x\in X$. We say that $x$ is\emph{ 1PS stable }if it is polystable
and for every 1PS $\lambda\in Y(G)$ not contained in $G_{X}$, the
limit $\lambda^{+}x$ does not exist.
\end{defn}
From Mumford's criterion above, one easily sees that if $x\in X$
is stable, then it is also 1PS stable. However, it turns out that
the two concepts are indeed equivalent, as follows.
\begin{thm}
\label{thm:new-Mumford-crit}Let $X$ be a $G$-variety and $x\in X$.
Then, the following are equivalent:

(1) The point $x$ is stable;

(2) The point $x$ is properly stable for the $G/G_{X}$ action;

(3) The point $x$ is 1PS stable for the $G/G_{X}$ action;

(4) The point $x$ is 1PS stable.\end{thm}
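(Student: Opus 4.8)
The plan is to establish the cycle of implications $(1)\Rightarrow(2)\Rightarrow(3)\Rightarrow(4)\Rightarrow(1)$, using the reduced action on $X$ as the bridge between the intrinsic notions and Mumford's classical criterion. The implications $(1)\Leftrightarrow(2)$ are already essentially in hand: by Proposition \ref{pro:stableproper}, $x$ is stable if and only if the reduced orbit morphism $\hat{\psi}_x:G/G_X\to X$ is proper, and properness of the orbit morphism is exactly the characterization of proper stability (for the $G/G_X$-action, whose center of action is trivial by construction, since $\bigcap_{x\in X}(G_x/G_X)=(\bigcap_{x}G_x)/G_X$ is trivial). So $(1)\Leftrightarrow(2)$ can be quoted directly. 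The implication $(2)\Rightarrow(3)$ is the easy half of Theorem \ref{thm:Mumford-Crit}: if $x$ is properly stable for the $G/G_X$-action, then polystability (closed orbit) holds, and by the numerical criterion no nontrivial $\lambda\in Y(G/G_X)$ has $\lambda^+x$ existing; this is precisely $1$PS stability for the reduced action.

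For $(3)\Rightarrow(4)$, I would use Proposition \ref{pro:rays}(iii): the map $\pi_*:Y(G)\to Y(G/G_X)$ satisfies that $\mu^+x$ exists if and only if $(\pi_*\mu)^+x$ exists. Now suppose $x$ is $1$PS stable for the $G/G_X$-action and take any $\lambda\in Y(G)$ with $\lambda^+x$ existing; then $(\pi_*\lambda)^+x$ exists, so by hypothesis $\pi_*\lambda$ is the trivial $1$PS of $G/G_X$, which means $\lambda(k^\times)\subseteq G_X$, i.e.\ $\lambda$ is contained in $G_X$. Together with polystability (which transfers verbatim between the $G$- and $\hat G$-actions, the orbits being literally the same subsets of $X$), this is exactly the statement that $x$ is $1$PS stable.

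The substantive implication is $(4)\Rightarrow(1)$, and this is where I expect the main obstacle to lie, since it requires upgrading a statement about $1$PS (a combinatorial/numerical condition) to properness of the morphism $\hat\psi_x$ (a geometric finiteness condition). The natural route is again through the reduced action: if $x$ is $1$PS stable, then by Proposition \ref{pro:rays}(iii) no nontrivial $\lambda\in Y(G/G_X)$ has $\lambda^+x$ existing, so $x$ is $1$PS stable for the $G/G_X$-action; since the center of that action is trivial, $1$PS stability for it coincides with the hypothesis in Mumford's criterion (Theorem \ref{thm:Mumford-Crit}) — note that here one also needs the closed-orbit part, which is built into the definition of $1$PS stability — and hence $x$ is properly stable for the $G/G_X$-action, i.e.\ $(2)$ holds, and then $(1)$ follows from $(2)\Leftrightarrow(1)$. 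The delicate point to get right is that Mumford's numerical criterion as stated in Theorem \ref{thm:Mumford-Crit} characterizes proper stability for an action whose center of action may be nontrivial only after passing to the quotient by that center; applying it to the $G/G_X$-action is legitimate precisely because $(G/G_X)_X=\{e\}$, so that $G/G_X$ modulo its center-of-action is $G/G_X$ itself, and the ``non trivial $1$PS of $Y(G/G_X)$'' clause in the criterion matches the ``not contained in $G_X$'' clause in the definition of $1$PS stability under $\pi_*$. Once this bookkeeping is set up carefully, no further computation is needed.
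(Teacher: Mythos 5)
Your treatment of $(1)\Leftrightarrow(2)\Leftrightarrow(3)$ and of $(3)\Rightarrow(4)$ agrees with the paper's. The gap is in $(4)\Rightarrow(1)$, more precisely in the step where you pass from 1PS stability for the $G$-action to 1PS stability for the $G/G_{X}$-action: you assert that if $x$ is 1PS stable then ``by Proposition \ref{pro:rays}(iii) no nontrivial $\lambda\in Y(G/G_{X})$ has $\lambda^{+}x$ existing.'' Proposition \ref{pro:rays}(iii) only compares $\mu^{+}x$ with $(\pi_{*}\mu)^{+}x$ for $\mu\in Y(G)$, so to rule out \emph{every} nontrivial $\lambda\in Y(G/G_{X})$ you need every such $\lambda$ to be of the form $\pi_{*}\mu$, i.e.\ you are tacitly assuming that $\pi_{*}:Y(G)\to Y(G/G_{X})$ is surjective. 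This fails in general: for a surjection of reductive groups the induced map on cocharacter lattices of maximal tori typically has image of finite index greater than one (consider $k^{\times}\to k^{\times}/\{\pm1\}\cong k^{\times}$, or $SL_{2}\to PGL_{2}$, where the class of $\mathrm{diag}(t,1)$ does not lift to a cocharacter of $SL_{2}$). A destabilizing nontrivial $\lambda\in Y(G/G_{X})$ could therefore a priori escape detection by every $\mu\in Y(G)$, and your argument does not exclude this.

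The paper closes exactly this hole with Lemma \ref{lem:Tori}: for any $\mu\in Y(G/G_{X})$ there exist $\lambda\in Y(G)$ and $p\in\mathbb{N}$ with $\pi_{*}\lambda=\mu^{p}$, proved by lifting maximal tori along the surjection and taking a common multiple of the exponents (the paper even credits A.~Schmitt for this device in a footnote). One then checks that $\mu^{+}x$ exists if and only if $(\mu^{p})^{+}x$ exists, that nontriviality of $\mu^{p}$ forces $\lambda(k^{\times})\nsubseteq G_{X}$, and that nonexistence of $\lambda^{+}x$ would force nonexistence of $(\pi_{*}\lambda)^{+}x=(\mu^{p})^{+}x$; this is the content of Theorem \ref{thm:stable-1ps-stable}, which supplies $(1)\Leftrightarrow(4)$. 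Your proof becomes correct once this lemma and the attendant power-of-$\mu$ bookkeeping are inserted at the point where you invoke Proposition \ref{pro:rays}(iii) in the direction $(4)\Rightarrow(3)$; the rest of your cycle of implications stands as written.
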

\begin{proof}
Let $x$ be properly stable for the $G/G_{X}$ action. Then, by Theorem
\ref{thm:Mumford-Crit}, this is equivalent to the non-existence of
the limit $\lambda^{+}x$, for all $\lambda\in Y(G/G_{X})$ nontrivial.
Since \[
(G/G_{X})_{X}=\bigcap_{x\in X}(G/G_{X})_{x}=\left(\bigcap_{x\in X}G_{x}\right)/G_{X}=\{e\},\]
this means that $x$ is 1PS stable for the $G/G_{X}$ action. So (2)
and (3) are equivalent. Now, suppose $x\in X$ is properly stable
for the action of $G/G_{X}$. Then, $NG_{x}=(G/G_{X})_{x}=G_{x}/G_{X}$
is finite, and so, $x$ is stable for the $G$-action, so (2) implies
(1). Conversely if $x\in X$ is stable, then the reduced orbit morphism
$\hat{\psi}_{x}:G/G_{X}\to X$ is proper, by Proposition \ref{pro:stableproper}.
This means that $x$ is properly stable for the reduced $G/G_{X}$
action. The equivalence of (1) and (4) is the statement of Theorem
\ref{thm:stable-1ps-stable} below.
\end{proof}
To proceed, we need the following lemma.
\begin{lem}
\label{lem:Tori} Let $f:G\to G'$ be a surjective morphism of algebraic
groups, and let $f_{*}:Y(G)\to Y(G')$ be the canonical push-forward
map. For every $\mu\in Y(G')$, there exists $\lambda\in Y(G)$ and
$p\in\mathbb{N}$ such that $f_{\ast}(\lambda)=\mu^{p}$.\end{lem}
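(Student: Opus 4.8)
The plan is to reduce the general surjection to the case of tori, where cocharacter lattices make the statement transparent. First I would fix a maximal torus $T'$ of $G'$ containing the image $\mu(k^{\times})$; since $\mu$ is a $1$PS its image is a torus (or trivial), so such a $T'$ exists. Let $T$ be a maximal torus of $G$; by conjugacy of maximal tori in the connected group $(G')^{0}$ and surjectivity of $f$, after replacing $T$ by a conjugate we may assume $f(T)$ is a maximal torus of $G'$, hence $f(T)=T'$. Thus it suffices to treat the restricted map $f|_{T}:T\to T'$, which is a surjective morphism of tori, and show that $f_{*}:Y(T)\to Y(T')$ has the property that some power $\mu^{p}$ of $\mu\in Y(T')$ lies in its image.

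Next I would pass to cocharacter lattices. For a torus $S$, $Y(S)$ is a free abelian group of rank $\dim S$, and a surjection $f|_{T}:T\twoheadrightarrow T'$ induces a homomorphism $f_{*}:Y(T)\to Y(T')$ of free abelian groups. The key point is that $f_{*}$ has finite cokernel: dually, the induced map on character lattices $f^{*}:X(T')\hookrightarrow X(T)$ is injective (because $f$ is surjective, no nontrivial character of $T'$ pulls back to the trivial character), and an injection of free abelian groups of the same rank — the ranks agree because $\ker(f|_T)$ is a diagonalizable group of dimension $\dim T-\dim T'$ and $T'$ has dimension $\dim T'$, while $f|_T$ surjective forces $\dim T'\le\dim T$ and in fact one checks $\operatorname{rk}Y(T')=\dim T'=\operatorname{rk}(\operatorname{im} f_*)$ — has finite cokernel, so $f^{*}$ does, and hence so does $f_{*}$. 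Therefore the quotient $Y(T')/f_{*}(Y(T))$ is a finite group; letting $p$ be its exponent (or its order), we get $p\cdot\mu\in f_{*}(Y(T))$, i.e. $\mu^{p}=f_{*}(\lambda)$ for some $\lambda\in Y(T)\subset Y(G)$. Composing with the inclusion $Y(T)\hookrightarrow Y(G)$ and recalling $f|_{T}=f\circ(\text{inclusion})$ gives $f_{*}(\lambda)=\mu^{p}$ in $Y(G')$, as required.

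I expect the only real subtlety to be the reduction step: justifying that after conjugation $f(T)$ is a maximal torus of $G'$ and that it can be taken to contain $\mu(k^{\times})$ simultaneously. The clean way is: choose $T'\supseteq\mu(k^{\times})$ maximal torus of $G'$, choose any maximal torus $T_{0}$ of $G$, then $f(T_{0})$ is contained in some maximal torus and, $f$ being surjective with $(G')^{0}$ connected, $f(T_0)$ is in fact a maximal torus of $G'$ (the image of a maximal torus under a surjection of connected groups is a maximal torus; for possibly-disconnected $G$ note $f(T_0)\subseteq f(G^0)=(G')^0$ since $T_0\subseteq G^0$); then conjugate $T_{0}$ by an element of $(G')^{0}$ — lifted along $f$, which is possible since $f(G^{0})=(G')^{0}$ — to arrange $f(T_{0})=T'$. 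Everything else is elementary linear algebra over $\mathbb{Z}$, and the statement "an injective endomorphism-up-to-rank of a finitely generated free abelian group has finite cokernel" is standard (Smith normal form).
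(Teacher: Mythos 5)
Your proposal is correct and follows the same overall strategy as the paper: reduce to a surjection of maximal tori (choosing $T'\supseteq\mu(k^{\times})$ and lifting it to a maximal torus of $G$ mapping onto it -- the paper simply cites Borel for this step, where you spell out the conjugacy argument), then settle the torus case by integral linear algebra. The torus case itself is handled differently: the paper writes $f$ in explicit diagonal coordinates $f(t_{1},\dots,t_{n})=(t_{1}^{p_{1}},\dots,t_{m}^{p_{m}})$ and takes $p$ to be the least common multiple of the $|p_{i}|$, whereas you argue abstractly that $f_{*}\colon Y(T)\to Y(T')$ has finite cokernel and take $p$ to be the exponent of that cokernel. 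Your version is arguably cleaner, since it avoids the paper's implicit appeal to a diagonal (Smith normal form) presentation of $f$. One small inaccuracy to fix in your write-up: the dual map $f^{*}\colon X(T')\hookrightarrow X(T)$ does \emph{not} have finite cokernel when $\dim T>\dim T'$, so the clause ``an injection of free abelian groups of the same rank \dots\ has finite cokernel, so $f^{*}$ does, and hence so does $f_{*}$'' is not right as stated. What you actually need, and do state correctly in the parenthetical, is that $\operatorname{im}f_{*}$ has full rank $\dim T'$ in $Y(T')$; this follows directly from injectivity of $f^{*}$ via the perfect pairing $X(T')\times Y(T')\to\mathbb{Z}$ (a character vanishing on $\operatorname{im}f_{*}$ pulls back to $0$, hence is $0$), and it immediately gives finiteness of $Y(T')/f_{*}(Y(T))$. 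With that sentence repaired the argument is complete.
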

\begin{proof}
Assume, first, that $G$ and $G'$ are tori. So there are $n,\: m\in\mathbb{N}$
with $m\leq n$ such that $G\cong(k^{\times})^{n}$ and $G'\cong(k^{\times})^{m}$.
Let $\mu\in Y(G')=\mathbb{Z}^{m}$, $\mu(t)=(t^{l_{1}},\cdots,t^{l_{m}})$
for every $t\in k^{\times}$ and some $(l_{1},\cdots,l_{m})\in\mathbb{Z}^{m}$.
The map $f:G\cong(k^{\times})^{n}\to G'\cong(k^{\times})^{m}$ applied
to $(t_{1},\cdots,t_{n})\in(k^{\times})^{n}$ is $f(t_{1},\cdots,t_{n})=(t_{1}^{p_{1}},\cdots,t_{m}^{p_{m}})$
for $(p_{1},\cdots,p_{m})\in\mathbb{Z}^{m}$. Let $p$ be the least
common multiple of $|p_{1}|,\cdots,|p_{m}|$. In particular, there
exist $\alpha_{1},\cdots,\alpha_{m}\in\mathbb{Z}$, so that $p=\alpha_{1}p_{1}=\cdots=\alpha_{m}p_{m}$.
We define $\lambda\in Y(G)$ by $\lambda(t)=(t^{\alpha_{1}l_{1}},\cdots,t^{\alpha_{m}l_{m}},t^{x_{m+1}},\cdots,t^{x_{n}})$,
for each $t\in k^{\times}$ and some $x_{m+1},\cdots,x_{n}\in\mathbb{Z}$.
It is easy to check that $f_{\ast}(\lambda)=\mu^{p}$, where $\mu^{p}(t)=(t^{pl_{1}},\cdots,t^{pl_{m}})$.

In the general case, let $\mu\in Y(G')$. Then, there is a maximal
torus $T$ of $G'$ so that $\mu(k^{\times})\subset T$. By \cite[22.6]{Bo},
there exists a maximal torus $S$ of $G$ such that $f(S)=T$. Now,
we apply the previous case to prove the statement.
\end{proof}
This allows to complete the proof of Theorem \ref{thm:new-Mumford-crit}%
\footnote{We thank A. Schmitt for suggesting the use of tori in showing the
equivalence of stability and 1PS stability.%
}.
\begin{thm}
\label{thm:stable-1ps-stable}Let $x\in X$. Then, $x$ is stable
if and only if $x$ is 1PS stable. \end{thm}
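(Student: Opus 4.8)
The plan is to obtain this equivalence from Theorem~\ref{thm:new-Mumford-crit}. Its proof establishes the equivalence of conditions $(1)$, $(2)$ and $(3)$ there without using the present statement, so $x$ is stable if and only if it satisfies condition $(3)$, namely that $x$ be $1$PS stable for the $G/G_X$ action. It therefore suffices to prove that $(3)$ is equivalent to $(4)$, i.e. that $x$ is $1$PS stable for the $G/G_X$ action exactly when it is $1$PS stable for the $G$ action. Both notions include polystability, which is literally the same condition in the two cases since the $G$-orbit and the $G/G_X$-orbit of $x$ coincide; so only the conditions on $\lambda$-rays have to be compared. Throughout, $\pi\colon G\to G/G_X$ denotes the quotient map, $\pi_*\colon Y(G)\to Y(G/G_X)$ the induced map on cocharacters, and I will use that $\lambda\in Y(G)$ is contained in $G_X$ if and only if $\pi_*\lambda$ is trivial (as $\ker\pi=G_X$).

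First I would treat the easy implication $(3)\Rightarrow(4)$. Let $\lambda\in Y(G)$ be a $1$PS not contained in $G_X$; then $\pi_*\lambda\in Y(G/G_X)$ is nontrivial, so condition $(3)$ says that $(\pi_*\lambda)^+x$ does not exist, and Proposition~\ref{pro:rays}(iii), applied to $\lambda$, then gives that $\lambda^+x$ does not exist. Since $x$ is polystable, this is precisely condition $(4)$.

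The substantial implication is $(4)\Rightarrow(3)$, and this is where Lemma~\ref{lem:Tori} is used. Let $\mu\in Y(G/G_X)$ be a nontrivial $1$PS; I want to show $\mu^+x$ does not exist. Applying Lemma~\ref{lem:Tori} to the surjection $\pi$ produces $\lambda\in Y(G)$ and an integer $p\geq 1$ with $\pi_*\lambda=\mu^{\,p}$. Because $\mu$ is nontrivial and $p\geq 1$, the power $\mu^{\,p}$ is again nontrivial (it makes sense since $\mu(k^\times)$ is a torus and $t\mapsto t^p$ is surjective on $k^\times$); hence $\pi_*\lambda$ is nontrivial, so $\lambda$ is not contained in $G_X$. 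By condition $(4)$, $\lambda^+x$ does not exist, and therefore $(\pi_*\lambda)^+x=(\mu^{\,p})^+x$ does not exist, again by Proposition~\ref{pro:rays}(iii). It remains to descend from $\mu^{\,p}$ to $\mu$: the $\mu^{\,p}$-ray through $x$ is $t\mapsto\mu(t)^p\cdot x=\mu(t^p)\cdot x$, that is, the composite of the $\mu$-ray $\mu_x$ with the morphism $k\to k$, $t\mapsto t^p$, which maps $0$ to $0$; so if $\mu^+x$ existed, i.e. if $\mu_x$ extended to a morphism $k\to X$, then this composite would extend the $\mu^{\,p}$-ray to $k$, contradicting the previous sentence. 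Thus $\mu^+x$ does not exist, and since $x$ is polystable we have established condition $(3)$.

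The only real obstacle is this passage between cocharacters of $G$ and of the quotient $G/G_X$: the map $\pi_*$ need not be surjective, so a $1$PS of $G/G_X$ cannot in general be lifted to $G$ on the nose. Lemma~\ref{lem:Tori} (the torus argument) controls the image of $\pi_*$ at the price of a power $p$, and the elementary reparametrization $t\mapsto t^p$ absorbs that power; apart from this, the argument is just formal bookkeeping with $\lambda$-rays via Proposition~\ref{pro:rays} and with the kernel of $\pi$.
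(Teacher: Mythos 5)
Your proof is correct and follows essentially the same route as the paper: reduce to the reduced $G/G_X$-action via the already-established equivalences in Theorem~\ref{thm:new-Mumford-crit}, use Lemma~\ref{lem:Tori} to lift a cocharacter of $G/G_X$ to $G$ up to a power $p$, absorb that power by the reparametrization $t\mapsto t^p$, and transfer existence of limits with Proposition~\ref{pro:rays}(iii). You are in fact slightly more explicit than the paper about the polystability clause and about why $(\mu^{p})^{+}x$ controls $\mu^{+}x$, but the argument is the same.
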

\begin{proof}
If $x$ is not 1PS stable, then there exists a 1PS $\lambda\in Y(G)$
such that $\lambda(k^{\times})\nsubseteq G_{X}$ and $\lambda^{+}x$
exists. So, $\tilde{\lambda}^{+}x$ exists as well, where $\tilde{\lambda}$
is the (non-trivial) push forward 1PS of $G/G_{X}$. Thus, $x$ is
not stable by Theorem \ref{thm:new-Mumford-crit}. 

Now assume that $x$ is not stable. By Theorem \ref{thm:new-Mumford-crit},
there exists a non-trivial $\mu\in Y(G/G_{X})$ such that $\mu^{+}x$
exists. By Lemma \ref{lem:Tori}, applied to the canonical surjection
$\pi:G\to G/G_{X}$, we have $\mu^{p}=\pi_{*}\lambda$ for some $\lambda\in Y(G)$
and $p\in\mathbb{N}$. Moreover $\mu^{p}$ is non trivial and $(\mu^{p})^{+}x$
exists (because $p\in\mathbb{N}$). So $\lambda(k^{\times})\nsubseteq G_{X}$
since $\mu$ is non-trivial. If the limit $\lambda^{+}x$ did not
exist, then $(\mu^{p})^{+}x$ would not exist as well, by Proposition
\ref{pro:rays}(ii), so we conclude that $\lambda^{+}x$ exists, and
so, $x$ is not 1PS stable.\end{proof}
\begin{cor}
Let $X$ be a $G$-variety with $G_{X}$ finite. Then all three stability
notions are equivalent (stable, properly stable and 1PS stable).\end{cor}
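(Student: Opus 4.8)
The plan is to observe that the three equivalences split into one substantial statement that has already been proved and two elementary finiteness observations, only one of which actually uses the hypothesis that $G_X$ is finite. Concretely, I would establish the cycle: properly stable $\Rightarrow$ stable $\Leftrightarrow$ 1PS stable $\Rightarrow$ properly stable.

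The equivalence of stability and 1PS stability requires no assumption on $G_X$ at all: it is exactly Theorem \ref{thm:stable-1ps-stable}, so I would invoke it directly. For the implication that proper stability implies stability, I would note that $NG_{x}=G_{x}/G_{X}$ is a quotient group of $G_{x}$, so if $G_{x}$ is finite then $NG_{x}$ is finite; combined with polystability this gives exactly the definition of stability. (This is the remark already recorded after Definition \ref{def:polystab}, and again it needs no hypothesis on $G_X$.)

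The only place the hypothesis enters is the reverse implication, stable $\Rightarrow$ properly stable. Here I would argue group-theoretically: if $x$ is stable then $O_{x}$ is closed and $NG_{x}=G_{x}/G_{X}$ is finite; since $G_{X}$ is a normal subgroup of $G_{x}$, the order of $G_{x}$ equals $|G_{X}|\cdot|G_{x}/G_{X}|$, and finiteness of both factors — $G_X$ by hypothesis, $G_x/G_X$ by stability — forces $G_{x}$ to be finite. Hence $x$ is polystable with finite stabilizer, i.e. properly stable. Assembling the three arrows yields that all three notions coincide. I do not expect any genuine obstacle here: the entire nontrivial content is carried by Theorem \ref{thm:stable-1ps-stable}, and the remaining steps are immediate.
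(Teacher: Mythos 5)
Your proof is correct and follows essentially the same route as the paper: the stable $\Leftrightarrow$ 1PS stable equivalence is delegated to Theorem \ref{thm:stable-1ps-stable}, and the identification of stable with properly stable reduces to the elementary observation that $G_{x}$ is finite exactly when both $G_{X}$ and $G_{x}/G_{X}$ are. No gaps.
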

\begin{proof}
By definition, $x$ is stable if $x$ is polystable and $G_{x}/G_{X}$
is finite, but since $G_{X}$ is finite, this means that $G_{x}$
is finite, so $x$ is stable if and only if $x$ is properly stable.
The equivalence of stability and 1PS stability was established in
the previous Theorem.
\end{proof}
To summarize, we showed the following relations between stability
notions of a point $x$ in a general affine $G$-variety $X$: $x$
properly stable $\implies$$x$ stable $\Longleftrightarrow$$x$
1PS stable $\implies$ $x$ 1PS polystable $\Longleftrightarrow$
$x$ polystable.

\section{Stability and Polystability Criteria}

In this section, we prove our main results (in particular Theorem
\ref{thm:Main}) on the relation between polystability of a point
$x$ in an affine $G$-variety and the complete reducibility of a
subgroup of $G$, naturally associated to $x$. Along the way, we
prove an analogue of a Theorem of Mundet-Schmitt in the context of
affine $G$-varieties, and provide some examples of the constructions.

\subsection{1PS polystability}

Let $G$ be an affine reductive group and $X$ be an affine $G$-variety.
Recall that, for any $x\in X$, the closure of its orbit $\overline{G\cdot x}$
contains a unique closed orbit. The following well known statement
characterizes non-closed orbits.
\begin{thm}
\label{thm:[Kempf]} If $x\in X$ is such that $\overline{G\cdot x}\setminus G\cdot x$
is non-empty then there is a $y\in X$ in the unique closed orbit
of $\overline{G\cdot x}$ and a one parameter subgroup $\lambda\in Y(G)$
such that $\lambda^{+}x=\lim_{t\to0}\lambda(t)\cdot x=y$.\end{thm}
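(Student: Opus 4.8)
The plan is to reduce the statement to the case of a \emph{connected} reductive group, where it is the classical Hilbert--Mumford--Kempf theorem, and then to invoke that case from \cite{Kempf78} (see also \cite{Birkes}); the passage between $G$ and its identity component $G^{0}$ is elementary component-counting, and the uniqueness of the closed orbit in $\overline{G\cdot x}$ recalled just above will glue the two halves together.

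\emph{Reduction to $G^{0}$.} Since $k^{\times}$ is connected, every one parameter subgroup of $G$ has image in $G^{0}$, so $Y(G)=Y(G^{0})$; moreover $G^{0}$ is reductive, since $(G^{0})^{0}=G^{0}$ has trivial unipotent radical. I would pick coset representatives $g_{1}=e,\dots,g_{m}$ of $G^{0}$ in $G$; as $G^{0}$ is normal, $G\cdot x=\bigcup_{i=1}^{m}g_{i}G^{0}\cdot x=\bigcup_{i=1}^{m}G^{0}\cdot(g_{i}x)$. If $G^{0}\cdot x$ were closed in $X$, then each $G^{0}\cdot(g_{i}x)=g_{i}\cdot(G^{0}\cdot x)$ would be closed, because acting by $g_{i}$ is a homeomorphism of $X$; hence $G\cdot x$, being a finite union of closed sets, would be closed, contradicting $\overline{G\cdot x}\setminus G\cdot x\neq\emptyset$. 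Thus $\overline{G^{0}\cdot x}\setminus G^{0}\cdot x\neq\emptyset$ as well.

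\emph{Applying the connected case and going back up.} Next I would apply the connected case to $G^{0}$ acting on the affine variety $X$: it produces $\lambda\in Y(G^{0})\subseteq Y(G)$ and $y:=\lambda^{+}x=\lim_{t\to0}\lambda(t)\cdot x$ such that the orbit $G^{0}\cdot y$ is closed and contained in $\overline{G^{0}\cdot x}$. Since $\overline{G^{0}\cdot x}$ is closed in $X$, the orbit $G^{0}\cdot y$ is closed in $X$; therefore $G\cdot y=\bigcup_{i=1}^{m}g_{i}\cdot(G^{0}\cdot y)$ is a finite union of closed subsets of $X$, hence closed. Also $y\in\overline{G^{0}\cdot x}\subseteq\overline{G\cdot x}$, and $\overline{G\cdot x}$ is $G$-invariant, so $G\cdot y\subseteq\overline{G\cdot x}$. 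By the uniqueness of the closed orbit inside $\overline{G\cdot x}$ recalled above, $G\cdot y$ \emph{is} that orbit, and by construction $\lambda^{+}x=y$ lies in it --- which is exactly the assertion.

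\emph{The connected case, and the main obstacle.} The one substantive ingredient is the connected case itself. For $G$ connected reductive over the algebraically closed field $k$, one $G$-equivariantly embeds the affine $G$-variety $\overline{G\cdot x}$ into a finite-dimensional $G$-module (always possible, as the coordinate ring of an affine $G$-variety is a union of finite-dimensional $G$-submodules), which reduces the problem to the Hilbert--Mumford situation inside a linear representation; Kempf's theory of optimal destabilizing one parameter subgroups \cite{Kempf78} (with \cite{Birkes} for the finer rationality refinements) then produces a $\lambda$ for which $\lim_{t\to0}\lambda(t)\cdot x$ exists and lies in the closed orbit. I expect the only real care needed is to invoke the version of that theorem which lands in the \emph{closed} orbit, rather than merely outside $G\cdot x$; this is precisely what the optimality in Kempf's construction (or, alternatively, an induction on orbit dimension, iterating the plain Hilbert--Mumford criterion) delivers. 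Everything else --- the component count and the topological manipulations above --- is routine.
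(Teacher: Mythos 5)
Your argument is correct and ultimately rests on the same sources as the paper, whose entire proof of this theorem is the citation ``See \cite{Birkes} or \cite{Kempf78}.'' The only substantive addition is your explicit reduction to the identity component via $Y(G)=Y(G^{0})$ together with the finite-union-of-closed-orbits bookkeeping, which is a worthwhile precaution here since the paper's reductive groups are not assumed connected while the classical Hilbert--Mumford--Kempf statements are usually given for connected groups.
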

\begin{proof}
See \cite{Birkes} or \cite{Kempf78}.
\end{proof}
This result motivates the following definition introduced by J. Levy
(see \cite{Lev} and also \cite[Definition 3.8]{BMRT})
\begin{defn}
Let $x\in X$. We say that $x$ is \emph{1PS polystable} if $\lambda^{+}x\in G\cdot x$
for all $\lambda\in Y(G)$ such that $\lambda^{+}x$ exists.
\end{defn}
With this terminology, Theorem \ref{thm:[Kempf]} can be restated
as an equivalence between the notions of polystability and 1PS polystability.
\begin{prop}
Let $x\in X$. Then, $x$ is polystable if and only if $x$ is 1PS
polystable.\end{prop}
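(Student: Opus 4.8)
The plan is to prove both implications directly from Theorem \ref{thm:[Kempf]} together with the definitions of polystability and 1PS polystability.

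First I would prove the easier direction: if $x$ is polystable, then $x$ is 1PS polystable. Suppose $\lambda \in Y(G)$ is such that $\lambda^{+}x$ exists; call it $y = \lambda^{+}x$. Since $\lambda_{x}(t) = \lambda(t)\cdot x \in G\cdot x$ for all $t \in k^{\times}$, the point $y$ lies in the Zariski closure $\overline{G\cdot x}$. But $x$ polystable means $G\cdot x$ is closed, so $\overline{G\cdot x} = G\cdot x$, hence $y \in G\cdot x$. As $\lambda$ was arbitrary, $x$ is 1PS polystable.

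For the converse, suppose $x$ is 1PS polystable; I want to show $G\cdot x$ is closed, i.e. that $\overline{G\cdot x}\setminus G\cdot x$ is empty. Argue by contradiction: if this set were non-empty, Theorem \ref{thm:[Kempf]} provides a one parameter subgroup $\lambda \in Y(G)$ and a point $y$ in the unique closed orbit contained in $\overline{G\cdot x}$ with $\lambda^{+}x = y$. By 1PS polystability, $y \in G\cdot x$, so $G\cdot x$ meets the closed orbit $G\cdot y$; since orbits are either equal or disjoint, $G\cdot x = G\cdot y$ is itself the closed orbit, contradicting the assumption that $\overline{G\cdot x}\setminus G\cdot x \neq \emptyset$. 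Hence $G\cdot x$ is closed and $x$ is polystable.

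I do not anticipate a genuine obstacle here: the statement is essentially a repackaging of Kempf's theorem, and the only point requiring a little care is the standard fact that the closure of an orbit contains a \emph{unique} closed orbit (so that the closed orbit furnished by Theorem \ref{thm:[Kempf]} is forced to coincide with $G\cdot x$ once we know it meets $G\cdot x$), which is already recalled in the text preceding Theorem \ref{thm:[Kempf]}.
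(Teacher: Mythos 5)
Your proof is correct and follows essentially the same route as the paper's: the forward direction uses that the limit of a $\lambda$-ray lies in the closure of the orbit (which is the orbit itself when closed), and the converse is the contrapositive application of Theorem \ref{thm:[Kempf]}. The only cosmetic difference is that you derive the contradiction via uniqueness of the closed orbit in $\overline{G\cdot x}$, whereas the paper simply records that the $y$ furnished by Kempf's theorem lies outside $G\cdot x$; these are the same observation.
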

\begin{proof}
If $x$ is polystable, then $G\cdot x$ is closed and the orbit $G\cdot x$
is a subvariety of $X$. Thus, if $\lambda^{+}x$ exists in $X$,
for some $\lambda\in Y(G)$, then $\lambda^{+}x$ is in fact in $G\cdot x$
(see Proposition \ref{pro:rays}). So $x$ is 1PS polystable. Conversely,
if $x$ is not polystable, so that $\overline{G\cdot x}\setminus G\cdot x$
is non-empty, then by Theorem \ref{thm:[Kempf]}, there exists $\lambda\in Y(G)$
and $y\notin G\cdot x$ such that $y=\lambda^{+}x$, which, by definition,
implies that $x$ is not 1PS polystable.\end{proof}
\begin{rem}
In \cite[Corollary 4.11]{BMRT} it is shown that this Proposition
also holds for affine $G$-varieties defined over non-algebraically
closed perfect fields.
\end{rem}

\subsection{Opposite 1PS and the Theorem of Mundet-Schmitt}

Let $\lambda$ be a 1PS of $G$. Recall the notions of R-parabolic
and R-Levi subgroups, given in Definition \ref{def:parab}. 

Recall that, given $\lambda\in Y(G)$, one says that another 1PS $\tilde{\lambda}$
is opposite to $\lambda$, if $P(\lambda)\cap P(\tilde{\lambda})$
is a R-Levi subgroup of both $P(\lambda)$ and $P(\tilde{\lambda})$.
For later convenience, we record the following fact.
\begin{lem}
\label{lem:opposite}Let $\lambda,\tilde{\lambda}\in Y(G)$ be opposite
1PS lying in the same maximal torus of $G$. Then $P(\lambda^{-1})=P(\tilde{\lambda})$.\end{lem}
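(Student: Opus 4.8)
\textbf{Proof proposal for Lemma \ref{lem:opposite}.}

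The plan is to reduce everything to combinatorics of roots in a common maximal torus $T$ containing both $\lambda$ and $\tilde\lambda$. First I would fix such a torus $T$ and decompose the Lie algebra $\mathfrak{g}$ into weight spaces $\mathfrak{g}=\mathfrak{g}_0\oplus\bigoplus_{\alpha\in\Phi}\mathfrak{g}_\alpha$ under $T$, where $\Phi=\Phi(G,T)$ is the root system; here I only need the component group to be handled by noting that $P(\lambda)$, $U(\lambda)$, $L(\lambda)$ for $\lambda\in Y(T)$ are determined by the sign of the integer pairings $\langle\alpha,\lambda\rangle$, together with the full centralizer $L(\lambda)=Z_G(\lambda(k^\times))$ (which contains all components commuting with $\lambda(k^\times)$). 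Concretely, $P(\lambda)$ is generated by $L(\lambda)$ and the root subgroups $U_\alpha$ with $\langle\alpha,\lambda\rangle\ge 0$, and $U(\lambda)$ by those with $\langle\alpha,\lambda\rangle>0$; an entirely analogous description holds for $\lambda^{-1}$ by reversing all signs, and for $\tilde\lambda$.

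Next I would unwind the hypothesis that $\lambda$ and $\tilde\lambda$ are opposite, i.e.\ that $P(\lambda)\cap P(\tilde\lambda)$ is an R-Levi subgroup of both. Since $L(\lambda)$ is the unique R-Levi of $P(\lambda)$ containing $T$ (all others are $U(\lambda)$-conjugates), and $P(\lambda)\cap P(\tilde\lambda)$ contains $T$, being an R-Levi of $P(\lambda)$ forces $P(\lambda)\cap P(\tilde\lambda)=L(\lambda)$; symmetrically $P(\lambda)\cap P(\tilde\lambda)=L(\tilde\lambda)$. In root-theoretic terms this says: for every $\alpha\in\Phi$,
\[
\langle\alpha,\lambda\rangle\ge 0 \text{ and } \langle\alpha,\tilde\lambda\rangle\ge 0 \iff \langle\alpha,\lambda\rangle=0 \iff \langle\alpha,\tilde\lambda\rangle=0.
\]
From the middle and right equivalences, $\langle\alpha,\lambda\rangle=0\iff\langle\alpha,\tilde\lambda\rangle=0$, so $\lambda$ and $\tilde\lambda$ cut out the same Levi; and from the left equivalence, whenever $\langle\alpha,\lambda\rangle>0$ we cannot have $\langle\alpha,\tilde\lambda\rangle\ge 0$, hence $\langle\alpha,\tilde\lambda\rangle<0$, i.e.\ $\langle\alpha,\lambda\rangle>0\iff\langle\alpha,\tilde\lambda\rangle<0\iff\langle\alpha,\lambda^{-1}\rangle<0$. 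Thus $\lambda^{-1}$ and $\tilde\lambda$ induce exactly the same sign pattern on all of $\Phi$: $\langle\alpha,\lambda^{-1}\rangle>0\iff\langle\alpha,\tilde\lambda\rangle>0$, and likewise for $=0$ and $<0$.

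Finally I would conclude that $P(\lambda^{-1})$ and $P(\tilde\lambda)$ are generated by the same R-Levi $L(\lambda^{-1})=L(\lambda)=L(\tilde\lambda)$ (equality of Levis from the previous step, using $L(\lambda)=L(\lambda^{-1})$) and the same collection of root subgroups $\{U_\alpha:\langle\alpha,\lambda^{-1}\rangle\ge 0\}=\{U_\alpha:\langle\alpha,\tilde\lambda\rangle\ge 0\}$, hence $P(\lambda^{-1})=P(\tilde\lambda)$. The main obstacle I anticipate is not the root combinatorics, which is routine, but making sure the non-connected case is handled cleanly: I need that an R-parabolic $P(\lambda)$ with $\lambda\in Y(T)$ is determined by the pair (its root-subgroup content, its R-Levi $L(\lambda)=Z_G(\lambda(k^\times))$), and that $Z_G(\lambda(k^\times))$ depends only on the ``kernel hyperplane'' of $\lambda$ in $X_*(T)\otimes\mathbb{Q}$ — i.e.\ that it coincides with $Z_G(\tilde\lambda(k^\times))$ once $\lambda,\tilde\lambda$ vanish on the same roots. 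This should follow because $\lambda(k^\times)$ and $\tilde\lambda(k^\times)$ generate subtori of $T$ with the same centralizer in $G^0$ (same vanishing roots), and any component of $G$ centralizing one centralizes the subtorus of $T$ on which both agree; I would cite the standard structure theory (e.g.\ \cite{Springer}, or \cite[\S6]{BMR} for the non-connected setting) for these facts rather than reprove them.
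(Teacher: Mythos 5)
Your proof is correct, and it shares its first half with the paper's argument but replaces the key final step by a different mechanism. Both proofs begin identically: take the common maximal torus $T$, note that $P(\lambda)$ and $P(\tilde\lambda)$ both contain $T$, and use the uniqueness of the R-Levi of an R-parabolic containing a given maximal torus to conclude $P(\lambda)\cap P(\tilde\lambda)=L(\lambda)=L(\tilde\lambda)$. From there the paper finishes abstractly: it invokes a second uniqueness statement from \cite{BMR} (given an R-parabolic $P$ and an R-Levi $L$ of $P$, there is a unique opposite R-parabolic $P'$ with $P\cap P'=L$), and since $P(\lambda^{-1})$ and $P(\tilde\lambda)$ are both opposite to $P(\lambda)$ with intersection $L(\lambda)$, they coincide. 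You instead finish by explicit root combinatorics: translating ``$P(\lambda)\cap P(\tilde\lambda)=L(\lambda)=L(\tilde\lambda)$'' into the sign conditions $\langle\alpha,\lambda\rangle=0\Leftrightarrow\langle\alpha,\tilde\lambda\rangle=0$ and $\langle\alpha,\lambda\rangle>0\Leftrightarrow\langle\alpha,\tilde\lambda\rangle<0$, so that $\lambda^{-1}$ and $\tilde\lambda$ have identical sign patterns on $\Phi$, whence $U(\lambda^{-1})=U(\tilde\lambda)$ and $L(\lambda^{-1})=L(\lambda)=L(\tilde\lambda)$, giving $P(\lambda^{-1})=L(\lambda^{-1})\ltimes U(\lambda^{-1})=P(\tilde\lambda)$. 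Your route is more computational and needs the root-space description of $P(\lambda)$, $U(\lambda)$, $L(\lambda)$ in the non-connected setting (which is available in \cite{BMR}, via $L(\lambda)=Z_G(\lambda(k^\times))$ and $U(\lambda)\subset G^{0}$ generated by the positive root subgroups); the paper's route is shorter and hides all of this inside the cited uniqueness-of-opposite lemma. Your closing worry about whether $Z_G(\lambda(k^\times))$ is controlled by the vanishing roots is actually moot: you have already established $L(\lambda)=L(\tilde\lambda)$ directly as the intersection $P(\lambda)\cap P(\tilde\lambda)$, so no separate argument about centralizers of subtori is needed.
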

\begin{proof}
We will use two uniqueness results, valid also for non-connected reductive
$G$: (i) Given a R-parabolic subgroup $P$ and a maximal torus $T$
of $G$ contained in $P$, there is a unique R-Levi subgroup of $P$
containing $T$; (ii) Given a R-parabolic subgroup $P$ and one of
its R-Levi subgroups $L$, there is a unique opposite R-parabolic
$P'$ such that $P\cap P'=L$ (see \cite[Cor. 6.5, Lem. 6.11, resp.]{BMR}).

To start, let $T$ be a maximal torus containing both $\lambda$ and
$\tilde{\lambda}$. Then, $T\subset L(\lambda)$ and $T\subset L(\tilde{\lambda})$
so both $P(\lambda)$ and $P(\tilde{\lambda})$ contain $T$. Since
$\lambda$ and $\tilde{\lambda}$ are opposite, $P(\lambda)\cap P(\tilde{\lambda})$
is a R-Levi subgroup $L$ of both, and clearly $L$ contains $T$.
By the uniqueness result (i) above, and since $L(\lambda)$ is a R-Levi
of $P(\lambda)$ containing $T$, we conclude that $L=L(\lambda)$.
Finally, since both $P(\lambda^{-1})$ and $P(\tilde{\lambda})$ are
opposite to $P(\lambda)$, and $L(\lambda)$ is a R-Levi of $P(\lambda)$,
the uniqueness result (ii) implies that $P(\lambda^{-1})=P(\tilde{\lambda})$,
as wanted. \end{proof}
\begin{defn}
A subset $\Lambda\subset Y(G)$ is called \emph{symmetric} if for
any 1PS $\lambda\in\Lambda$ there exists another 1PS $\tilde{\lambda}\in\Lambda$,
which is opposite to $\lambda$.
\end{defn}
Given $x\in X$, we use the following notation: \[
\Lambda_{x}:=\{\lambda\in Y(G):\lambda^{+}x\mbox{ exists}\}.\]
Note that if $\lambda^{+}x$ exists, and $g\in P(\lambda)$, then
a computation yields \[
\lambda^{+}(g\cdot x)=(\lambda^{+}g)\cdot(\lambda^{+}x).\]
In particular, if $u\in U(\lambda)$, then $\lambda^{+}(u\cdot x)=\lambda^{+}x$.
So, it is clear that $\Lambda_{x}=\Lambda_{u\cdot x}$ for any $u\in U(\lambda)$
such that $\lambda\in\Lambda_{x}$. The following properties of $\Lambda_{x}$
are straightforward.
\begin{prop}
\label{pro:lambda}Consider the natural inclusion $Y(G_{x})\subset Y(G)$.
For all $x\in X$, we have

(i) If $G\cdot x=x$ then $\Lambda_{x}=Y(G)$.

(ii) $Y(G_{X})\subset Y(G_{x})\subset\Lambda_{x}\subset Y(G)$.

(iii) If $x$ is stable, then $Y(G_{x})=\Lambda_{x}=Y(G_{X})$.
\end{prop}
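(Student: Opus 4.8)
The plan is to verify the three statements separately and in the order given; each reduces to unwinding the relevant definitions, and the only non-formal ingredient is Theorem~\ref{thm:new-Mumford-crit}, which is needed for (iii).

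First I would treat (i). The hypothesis $G\cdot x=x$ means $x$ is a fixed point of the action, so for every $\lambda\in Y(G)$ the $\lambda$-ray $\lambda_{x}\colon t\mapsto\lambda(t)\cdot x$ is the constant morphism with value $x$; this plainly extends to a morphism $k\to X$, so $\lambda^{+}x=x$ exists. Hence $\Lambda_{x}=Y(G)$.

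Next I would prove the chain of inclusions in (ii). The inclusion $Y(G_{X})\subset Y(G_{x})$ is immediate from $G_{X}=\bigcap_{x'\in X}G_{x'}\subset G_{x}$, since a cocharacter of $G$ with image contained in $G_{X}$ has image contained in $G_{x}$. For $Y(G_{x})\subset\Lambda_{x}$, observe that if $\lambda(k^{\times})\subset G_{x}$ then $\lambda_{x}$ is again the constant morphism with value $x$, so $\lambda^{+}x$ exists, i.e.\ $\lambda\in\Lambda_{x}$. Finally, $\Lambda_{x}\subset Y(G)$ holds by the very definition of $\Lambda_{x}$.

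For (iii), using (ii) it suffices to establish the reverse inclusion $\Lambda_{x}\subset Y(G_{X})$. Since $x$ is stable, Theorem~\ref{thm:new-Mumford-crit} tells us that $x$ is 1PS stable, that is, $\lambda^{+}x$ does not exist whenever $\lambda(k^{\times})\not\subset G_{X}$. Read contrapositively, this says precisely that $\lambda\in\Lambda_{x}$ implies $\lambda(k^{\times})\subset G_{X}$, i.e.\ $\lambda\in Y(G_{X})$. Combining with (ii) gives $Y(G_{x})=\Lambda_{x}=Y(G_{X})$. I do not expect a real obstacle at any point: the substantive content behind (iii) --- the equivalence of stability and 1PS stability --- is already contained in Theorem~\ref{thm:new-Mumford-crit}, and everything else rests on the trivial remark that a one-parameter subgroup fixing $x$ produces a constant $\lambda$-ray.
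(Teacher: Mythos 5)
Your proof is correct and is exactly the intended argument: the paper states this proposition without proof (labelling it "straightforward"), and your unwinding of the definitions for (i) and (ii), together with the appeal to Theorem~\ref{thm:new-Mumford-crit} (which precedes this proposition, so there is no circularity) for (iii), supplies the missing details in the natural way.
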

Let $X$ and $Y$ be affine $G$-varieties, and $f:X\to Y$ be a $G$-equivariant
morphism. The following properties are also easily established.
\begin{prop}
\label{pro:inclusion}(i) For all $x\in X$, $\Lambda_{x}\subset\Lambda_{f(x)}$.

(ii) If $f:X\rightarrow Y$ is an inclusion, then $\Lambda_{x}=\Lambda_{f(x)}$
for all $x\in X$.

(iii) If $X\times Y$ is the product variety endowed with the natural
product $G$-action, then $\Lambda_{(x,y)}=\Lambda_{x}\cap\Lambda_{y}$
for every $(x,y)\in X\times Y$.
\end{prop}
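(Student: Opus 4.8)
The plan is to reduce all three assertions to the definition of $\lambda^{+}$ as the extension of a morphism $k^{\times}\to(\cdot)$ over $k$, combined with the functoriality of $\lambda$-rays recorded in Proposition~\ref{pro:rays}. Throughout, recall that for a point $z$ of a $G$-variety $Z$ one has $\lambda\in\Lambda_{z}$ precisely when the $\lambda$-ray $\lambda_{z}:k^{\times}\to Z$ extends to a morphism $\bar{\lambda}_{z}:k\to Z$.

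For part (i), I would take $\lambda\in\Lambda_{x}$, so that $\bar{\lambda}_{x}:k\to X$ exists; by Proposition~\ref{pro:rays}(i) we have $\lambda_{f(x)}=f\circ\lambda_{x}$, hence $f\circ\bar{\lambda}_{x}:k\to Y$ is a morphism restricting to $\lambda_{f(x)}$ on $k^{\times}$, which exhibits $\lambda^{+}f(x)$ and gives $\lambda\in\Lambda_{f(x)}$. Part (ii) is then immediate from part (i) applied to $f$ together with Proposition~\ref{pro:rays}(ii), which already records that for an inclusion $\lambda^{+}x$ exists in $X$ if and only if $\lambda^{+}f(x)$ exists in $Y$; the only point worth spelling out is that an extension $\bar{\lambda}_{f(x)}:k\to Y$ automatically has image in $X$, because $X$ is closed in $Y$ and contains the image of the dense open subset $k^{\times}$.

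For part (iii), the projections $p_{X}:X\times Y\to X$ and $p_{Y}:X\times Y\to Y$ are $G$-equivariant, so part (i) yields $\Lambda_{(x,y)}\subset\Lambda_{x}\cap\Lambda_{y}$. For the reverse inclusion, given $\lambda\in\Lambda_{x}\cap\Lambda_{y}$ I would pick extensions $\bar{\lambda}_{x}:k\to X$ and $\bar{\lambda}_{y}:k\to Y$; then $(\bar{\lambda}_{x},\bar{\lambda}_{y}):k\to X\times Y$ is a morphism extending the $\lambda$-ray $\lambda_{(x,y)}(t)=(\lambda(t)\cdot x,\lambda(t)\cdot y)$, so $\lambda\in\Lambda_{(x,y)}$. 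Here one uses only that a morphism into a product variety is the same datum as a pair of morphisms into the two factors.

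I do not expect any genuine obstacle: everything rests on the elementary behaviour of morphisms out of $k^{\times}$, and of their extensions over $k$, under composition with a morphism, under restriction to a closed subvariety, and under the universal property of products. The single place to be slightly careful is the closedness argument in part (ii), and that is already subsumed in Proposition~\ref{pro:rays}(ii).
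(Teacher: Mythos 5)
Your proof is correct and is exactly the elementary argument the paper has in mind (the paper in fact omits the proof, stating only that these properties are ``easily established''): part (i) from functoriality of $\lambda$-rays, part (ii) from Proposition~\ref{pro:rays}(ii) together with closedness of $X$ in $Y$, and part (iii) from the universal property of the product. Your explicit remark that the extension $\bar{\lambda}_{f(x)}:k\to Y$ lands in $X$ because $X$ is closed and contains the image of the dense subset $k^{\times}$ is precisely the one point worth spelling out, and you handle it correctly.
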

Given a 1PS $\lambda\in Y(G)$ and $x\in X$, we have defined the
limit $\lambda^{+}x$. Similarly, we define $\lambda^{-}x$ as the
limit (when it exists) \[
\lambda^{-}x:=\lim_{t\to\infty}\lambda(t)\cdot x=\lim_{t\to0}\lambda^{-1}(t)\cdot x.\]
We now prove Theorem \ref{thm:M-S}.
\begin{thm}
\label{thm:poly-symm} $x\in X$ is polystable if and only if $\Lambda_{x}$
is symmetric.\end{thm}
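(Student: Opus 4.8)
The plan is to prove both implications by reducing to the one-dimensional (single 1PS) picture and invoking the structure theory of R-parabolics together with the characterization of polystability as 1PS polystability.

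First, the easy direction: suppose $x$ is polystable. Given $\lambda\in\Lambda_{x}$, I want to produce an opposite 1PS $\tilde\lambda\in\Lambda_{x}$. Set $y:=\lambda^{+}x$. Since $x$ is polystable, the orbit $G\cdot x$ is closed, hence (by Proposition \ref{pro:rays} and the fact that $G\cdot x$ is a subvariety) $y\in G\cdot x$, say $y=g\cdot x$. A standard computation with the $\lambda$-ray shows that $\lambda$ fixes $y$ in the limit, i.e. $\lambda^{+}y=y$; more precisely $\lambda(s)\cdot y = y$ for all $s\in k^{\times}$, since $y=\lim_{t\to 0}\lambda(t)\cdot x$ forces $\lambda(k^{\times})\subset G_{y}$. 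Now consider $\lambda':=g^{-1}\lambda g \in Y(G)$ (conjugate 1PS). Because $\lambda(k^{\times})\subset G_y = g G_x g^{-1}$, we get $\lambda'(k^{\times})\subset G_x$, so in particular $\lambda'\in\Lambda_x$ and indeed $(\lambda')^{\pm}x = x$. The candidate opposite is built as follows: pick a maximal torus $T$ of $G$ with $\lambda(k^{\times})\subset T$; let $\tilde\lambda$ be the 1PS with $P(\tilde\lambda)=P(\lambda^{-1})$ lying in $T$ — equivalently take $\tilde\lambda=\lambda^{-1}$ itself, which is automatically opposite to $\lambda$ in the sense of Definition, since $P(\lambda)\cap P(\lambda^{-1})=L(\lambda)$ is an R-Levi of both. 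It remains to check $\lambda^{-1}\in\Lambda_x$, i.e. $\lambda^{-}x$ exists. This is where I use the conjugating element: $(\lambda')^{-}x = x$ exists, and $\lambda' = g^{-1}\lambda g$ where $g\in P(\lambda)$... actually more care is needed here. The cleaner route: since $y=g\cdot x$ and $\lambda(k^\times)$ stabilizes $y$, write $g = \ell u$ with $\ell\in L(\lambda)$, $u\in U(\lambda)$ (using $P(\lambda)=L(\lambda)\ltimes U(\lambda)$) — but we do not a priori know $g\in P(\lambda)$. Instead, use that $\lambda(k^\times)\subset G_y$ and apply Theorem \ref{thm:[Kempf]}-type reasoning in reverse: the element $g^{-1}$ conjugates $\lambda$ to a 1PS $\lambda'$ with $\lambda'(k^\times)\subset G_x$, hence $\lambda'^{-}x=x$ exists; and by Lemma \ref{lem:opposite} applied after moving everything into a common maximal torus, one finds an opposite to $\lambda$ inside $\Lambda_x$. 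I expect this matching-of-tori bookkeeping to be the main technical obstacle, precisely the point where the non-connectedness of $G$ could bite and where Lemma \ref{lem:opposite} is designed to be used.

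For the converse, suppose $\Lambda_x$ is symmetric; I must show $x$ is polystable, equivalently (by the Proposition identifying polystability with 1PS polystability) that $\lambda^{+}x\in G\cdot x$ whenever $\lambda^{+}x$ exists. So fix $\lambda\in\Lambda_x$ and let $y=\lambda^{+}x$. By symmetry there is $\tilde\lambda\in\Lambda_x$ opposite to $\lambda$, so $\tilde\lambda^{+}x$ also exists. Now I use the standard ``limit from both sides'' argument of Mundet–Schmitt/Kempf: $\lambda^{+}x$ and $\tilde\lambda^{+}x$ both exist, and the key point is that one can connect $x$ to its limit by an element of $G$ — concretely, because $P(\lambda)\cap P(\tilde\lambda)=L$ is a common R-Levi, the 1PS $\lambda$ restricted to $L$ is central-ish and one shows $\lambda^{+}x = \ell\cdot x$ for a suitable $\ell$, or alternatively that $\lambda^{-}x$ exists and hence by Theorem \ref{thm:[Kempf]} applied to both $\lambda$ and its opposite, the orbit cannot have a strictly smaller orbit in its closure on either side, forcing $G\cdot x$ closed. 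More precisely: if $G\cdot x$ were not closed, Theorem \ref{thm:[Kempf]} gives a 1PS $\mu$ with $\mu^{+}x=z$ in the unique closed orbit, $z\notin G\cdot x$; then $\mu\in\Lambda_x$, so by symmetry $\tilde\mu\in\Lambda_x$ is opposite to $\mu$; using the structure $P(\mu)\cap P(\tilde\mu)=L(\mu)$ and the computation $\mu^{+}(u\cdot x)=\mu^{+}x$ for $u\in U(\mu)$, together with $\tilde\mu^{+}x$ existing, one derives that $z$ and $x$ lie in the same orbit — a contradiction. The clean way to run this last step: since $\tilde\mu$ is opposite to $\mu$ and lies in a common maximal torus after conjugation, Lemma \ref{lem:opposite} gives $P(\tilde\mu)=P(\mu^{-1})$ (up to the conjugation), so effectively $\mu^{-}x$ exists; but then the $\lambda$-ray $t\mapsto \mu(t)\cdot x$ extends to a morphism $\mathbb{P}^1\to X$ from a complete variety into an affine variety, hence is constant, so $x=\mu^{+}x=z$, contradiction.

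In summary, the two ingredients I would assemble are: (a) the equivalence polystable $\Leftrightarrow$ 1PS polystable proved just above, to replace ``closed orbit'' by a one-parameter condition; and (b) Lemma \ref{lem:opposite} plus the ``morphism from $\mathbb{P}^1$ into an affine variety is constant'' principle, to convert ``limits exist from both sides along opposite 1PS'' into ``the ray is constant, so the limit is in the orbit.'' The subtle point throughout — and the step I expect to fight with — is passing between an abstract opposite $\tilde\lambda$ (which need not share a torus with $\lambda$) and the concrete $\lambda^{-1}$: one must conjugate by an element of $P(\lambda)$ to bring $\tilde\lambda$ into $L(\lambda)$, check this conjugation does not disturb membership in $\Lambda_x$ (using $\Lambda_x=\Lambda_{u\cdot x}$ for $u\in U(\lambda)$), and only then apply the completeness argument. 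Everything else is the bookkeeping of R-parabolic structure already recorded in the excerpt.
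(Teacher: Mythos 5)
Your converse direction (symmetric $\Rightarrow$ polystable) matches the paper's argument step for step: Theorem \ref{thm:[Kempf]} produces a destabilizing $\mu$ with $\mu^{+}x\notin G\cdot x$, symmetry produces an opposite $\tilde\mu\in\Lambda_{x}$, one conjugates by an element of $U(\mu)$ to put $\mu$ and $\tilde\mu$ in a common maximal torus, applies Lemma \ref{lem:opposite} to identify $P(\tilde\mu^{-1})$ with $P(\mu)$, deduces that $\mu^{-}x$ exists as well, and concludes by the constancy of the resulting morphism $\mathbb{P}_{k}^{1}\to X$. That half is essentially the paper's proof.

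The gap is in the forward direction, and it sits exactly where you write ``actually more care is needed here.'' Knowing only that $y=\lambda^{+}x=g\cdot x$ for some $g\in G$ gives you $\lambda':=g^{-1}\lambda g$ with $\lambda'(k^{\times})\subset G_{x}$, but $(\lambda')^{-1}$ is opposite to $\lambda'$, not to $\lambda$, and you cannot transport it back without controlling $g$. The missing input --- which the paper invokes as ``the general theorems'' from \cite{BMRT} and the references in \cite{MR} --- is the refinement of Theorem \ref{thm:[Kempf]} asserting that when $G\cdot x$ is closed and $\lambda^{+}x$ exists, the limit is attained by an element of the unipotent radical: $\lambda^{+}x=u\cdot x$ with $u\in U(\lambda)$. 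With that, $u\cdot x$ is fixed by $\lambda(k^{\times})$, so $\lambda^{-1}\in\Lambda_{u\cdot x}$, and since $u\in U(\lambda)\subset P(\lambda)$ one has $P(u^{-1}\lambda u)=P(\lambda)$, so the conjugate $u^{-1}\lambda^{-1}u$ is a 1PS opposite to $\lambda$ that lies in $\Lambda_{x}$. This is not torus bookkeeping: it is a genuine theorem of Kempf--Rousseau/Birkes type, and your proposal neither states nor proves it, so the implication ``polystable $\Rightarrow$ $\Lambda_{x}$ symmetric'' does not close as written.
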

\begin{proof}
Let $x\in X$ be polystable, and suppose that $\lambda\in\Lambda_{x}$.
Then $\lambda^{+}x$ exists, and because $G\cdot x$ is closed, $\lambda^{+}x\in G\cdot x$.
By the general theorems, \cite{BMRT} and references in \cite{MR},
$\lambda^{+}x=u\cdot x$ for some $u\in U(\lambda)$. Now, it is clear
that the limit $u\cdot x$ centralizes $\lambda$, so that $\lambda^{-1}\cdot(u\cdot x)=u\cdot x$.
This means that $\lambda^{-1}\in\Lambda_{u\cdot x}=\Lambda_{x}$.
Since $\lambda^{-1}$ and $\lambda$ are obviously opposite, we conclude
that $\Lambda_{x}$ is symmetric.

Conversely, if $G\cdot x$ is not closed, then by Theorem \ref{thm:[Kempf]},
there exists $\lambda\in Y(G)$ such that $\lambda^{+}x\in\overline{G\cdot x}\setminus G\cdot x$.
This implies that $\{\lambda(t)\cdot x:t\in k^{\times}\}\subset X$
is not a single point. Suppose to get a contradiction, that $\Lambda_{x}$
was symmetric: there is an opposite $\tilde{\lambda}\in Y(G)$ such
that $\tilde{\lambda}^{+}x$ exists. By the same argument in \cite[Appendix]{MR}
we can find a $u\in U(\lambda)$ such that $u\lambda(t)u^{-1}$ and
$\tilde{\lambda}(t)$ lie on the same maximal torus (and are still
opposite). So, by Lemma \ref{lem:opposite}, and because $u\in U(\lambda)\subset P(\lambda)$\[
P(\tilde{\lambda}^{-1})=P(u\lambda u^{-1})=P(\lambda).\]
Now, this means that $\tilde{\lambda}^{-1}\in\Lambda_{x}$, so that
\[
\tilde{\lambda}^{+}x=(\tilde{\lambda}^{-1})^{-}x=\lambda^{-}x.\]
Then, $\lambda_{x}:k\to X$ extends to a morphism $\lambda:\mathbb{P}_{k}^{1}\to X$
which has to be constant, contradicting the previous claim; we conclude
that $\Lambda_{x}$ is not symmetric.
\end{proof}
We now summarize the setting in Schmitt's appendix to Mundet's article
\cite{MR}. Suppose our $G$-variety is a $k$-vector space $V$.
We say that $v$ is \emph{semistable} if $0\notin\overline{G\cdot v}$.
Given a 1PS $\lambda\in Y(G)$ and a weight $n\in\mathbb{Z}$, define\[
V(n):=\left\{ v\in V|\lambda(t)\cdot v=t^{n}v\quad\forall t\in k^{\times}\right\} .\]
Then, we have a finite vector space decomposition in non-empty subspaces
$V=V(n_{1})\oplus\cdots\oplus V(n_{m})$ where $n_{1}<\cdots<n_{m}$.
With respect to this decomposition, write $v=v_{1}+\cdots+v_{m}$
and define\[
\mu(v,\lambda):=\min\left\{ n_{j}|v_{j}\neq0\right\} \]
Then, we have
\begin{lem}
Let $v$ be semistable. Then $\mu(v,\lambda)=0$ if and only if $\lambda^{+}v$
exists.\end{lem}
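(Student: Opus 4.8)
The plan is to reduce everything to the elementary observation that, since $V$ is a vector space, the $\lambda$-ray $t\mapsto\lambda(t)\cdot v$ is, in coordinates, a Laurent polynomial in $t$. Concretely, writing $v=v_1+\cdots+v_m$ according to the weight decomposition $V=V(n_1)\oplus\cdots\oplus V(n_m)$, one has $\lambda(t)\cdot v=\sum_{j}t^{n_j}v_j$. Such a morphism $k^{\times}\to V$ extends to a morphism $k\to V$ if and only if no negative powers of $t$ occur among the nonzero terms, i.e.\ if and only if $n_j\geq 0$ whenever $v_j\neq 0$, which is exactly the condition $\mu(v,\lambda)\geq 0$. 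So the first step I would record is the equivalence: $\lambda^{+}v$ exists $\iff\mu(v,\lambda)\geq 0$, and in that case $\lambda^{+}v$ is the weight-zero component of $v$ (so $\lambda^{+}v=0$ when $\mu(v,\lambda)>0$, and $\lambda^{+}v\neq 0$ when $\mu(v,\lambda)=0$, since the $V(n_j)$ form a direct sum).

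For the forward implication I would simply note that if $\mu(v,\lambda)=0$ then in particular $\mu(v,\lambda)\geq 0$, so $\lambda^{+}v$ exists by the observation above; semistability is not even needed for this direction.

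For the converse, assume $v$ semistable and $\lambda^{+}v$ exists. The observation gives $\mu(v,\lambda)\geq 0$, so it remains to exclude $\mu(v,\lambda)>0$. If $\mu(v,\lambda)>0$, then every nonzero $v_j$ has weight $n_j\geq 1$, hence $\lambda(t)\cdot v=\sum_{j}t^{n_j}v_j\to 0$ as $t\to 0$, i.e.\ $\lambda^{+}v=0$. Since $\lambda(t)\cdot v\in G\cdot v$ for all $t\in k^{\times}$ and $k^{\times}$ is dense in $k$, continuity of the extended morphism forces $0=\lambda^{+}v\in\overline{\{\lambda(t)\cdot v:t\in k^{\times}\}}\subset\overline{G\cdot v}$, contradicting semistability. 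Therefore $\mu(v,\lambda)=0$.

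I do not anticipate any real obstacle here; the proof is short. The only point to keep an eye on is the degenerate case $v=0$, but a semistable vector is automatically nonzero (otherwise $\overline{G\cdot v}=\{0\}\ni 0$), so $\mu(v,\lambda)$ is a well-defined finite integer throughout, and the argument goes through unchanged.
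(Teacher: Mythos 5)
Your proof is correct and follows essentially the same route as the paper: expand $\lambda(t)\cdot v$ in the weight decomposition, observe that the limit exists iff $\mu(v,\lambda)\geq 0$, and rule out $\mu(v,\lambda)>0$ because it would force $\lambda^{+}v=0\in\overline{G\cdot v}$, contradicting semistability. Your version merely spells out a couple of details the paper leaves implicit (why $0$ lies in the orbit closure, and the degenerate case $v=0$), which is fine.
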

\begin{proof}
By construction\[
\lambda(t)\cdot v=\lambda(t)\cdot(v_{1}+\cdots+v_{m})=t^{n_{1}}v_{1}+\cdots+t^{n_{m}}v_{m},\]
with $n_{1}<\cdots<n_{m}$, so the limit $\lambda^{+}v=\lim_{t\to0}\lambda(t)\cdot v$
exists if and only if $\mu(v,\lambda)\geq0$. On the other hand, if
$\mu(v,\lambda)>0$, we would have $\lambda^{+}v=0$, contradicting
the semistability of $v$. \end{proof}
\begin{cor}
{[}Mundet-Schmitt, \cite[Thm A.5]{MR}{]} A semistable $v\in V$ is
polystable if and only if for every $\lambda\in Y(G)$ with $\mu(v,\lambda)=0$
there exists an opposite $\lambda'$ such that $\mu(v,\lambda')=0$.\end{cor}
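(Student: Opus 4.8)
The plan is to deduce the Mundet--Schmitt corollary directly from Theorem \ref{thm:poly-symm} together with the preceding lemma, by unwinding the definition of ``symmetric'' in the special case where $X = V$ is a vector space. First I would note that a semistable $v$ has no 1PS with $\lambda^{+}v = 0$ in a way that contradicts semistability, so the lemma applies verbatim: for semistable $v$, membership $\lambda \in \Lambda_v$ is equivalent to $\mu(v,\lambda) = 0$. Thus the set $\Lambda_v$ is precisely $\{\lambda \in Y(G) : \mu(v,\lambda) = 0\}$.

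Next I would invoke Theorem \ref{thm:poly-symm}: $v$ is polystable if and only if $\Lambda_v$ is symmetric, i.e.\ for every $\lambda \in \Lambda_v$ there is an opposite $\tilde\lambda \in \Lambda_v$. Translating through the lemma, ``$\lambda \in \Lambda_v$'' becomes ``$\mu(v,\lambda) = 0$'' and ``$\tilde\lambda \in \Lambda_v$'' becomes ``$\mu(v,\tilde\lambda) = 0$''. This gives exactly the stated criterion: a semistable $v \in V$ is polystable if and only if for every $\lambda \in Y(G)$ with $\mu(v,\lambda) = 0$ there exists an opposite $\lambda'$ with $\mu(v,\lambda') = 0$.

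The only point that needs a word of care is the notion of ``opposite'' used here: in Theorem \ref{thm:poly-symm} the symmetry condition requires an opposite 1PS in the sense of Definition \ref{def:parab} (i.e.\ $P(\lambda) \cap P(\tilde\lambda)$ is an R-Levi of both), which is the same notion already in force in the Mundet--Schmitt setting, so no reconciliation is needed. I would also remark that semistability is genuinely used twice: once so that the lemma's dichotomy ($\mu(v,\lambda) = 0$ versus $\lambda^{+}v$ not existing or equaling $0$) collapses to ``$\mu = 0 \Leftrightarrow \lambda^{+}v$ exists'', and implicitly so that the orbit-closure machinery behind Theorem \ref{thm:poly-symm} is being applied to a $v$ whose orbit closure does not contain $0$ (otherwise polystability would be vacuous or automatic for $v=0$).

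Accordingly, the proof is essentially a two-line citation: apply the lemma to identify $\Lambda_v$, then apply Theorem \ref{thm:poly-symm}. There is no real obstacle; the ``hard part'', such as it is, was already carried out in proving Theorem \ref{thm:poly-symm} (via Theorem \ref{thm:[Kempf]} and Lemma \ref{lem:opposite}), and here one only has to check that the weight-function description of $\Lambda_v$ matches the general definition, which the lemma does.
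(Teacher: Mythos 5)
Your argument is exactly the paper's: use the preceding lemma to identify $\Lambda_{v}$ with $\{\lambda\in Y(G):\mu(v,\lambda)=0\}$ for semistable $v$, and then restate Theorem \ref{thm:poly-symm}. The proposal is correct and matches the paper's proof, with the added (harmless) remarks on where semistability enters.
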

\begin{proof}
In view of the previous Lemma, this is a simple restatement of Theorem
\ref{thm:poly-symm}.
\end{proof}

\subsection{Polystability and completely reducible subgroups}

In this subsection we provide a correspondence between the notion
of polystability (resp. stability) for points of the $G$-variety
$X$ and the notion of complete reducibility (resp. irreducibility)
for subgroups of $G$. In order to obtain this, we introduce the following
construction:

Given $\Lambda\subset Y(G)$ we define $H_{\Lambda}:=\bigcap_{\lambda\in\Lambda}P(\lambda)$;
For any such $\Lambda$, $H_{\Lambda}$ is a closed subgroup of $G$.

Let $S(G)$ be the set of closed subgroups of $G$. We consider the
map $\eta:X\to S(G)$ given by \[
x\mapsto H_{x}:=H_{\Lambda_{x}}\subset G.\]
The following properties are clear, in view of Propositions \ref{pro:lambda}
and \ref{pro:inclusion}. Again, $f:X\rightarrow Y$ denotes a $G$-equivariant
morphism of affine $G$-varieties, and $X\times Y$ the product $G$-variety.
\begin{prop}
\label{pro:eta(x)}Let $Z=Z(G)$ and $x\in X$.

(i) If $\Lambda_{x}\subset Y(Z)$, then $H_{x}=G$.

(ii) If $G_{x}=G$ then $H_{x}=Z$.

(iii) $Z\subset H_{x}\subset\bigcap_{\lambda\in Y(G_{x})}P(\lambda)$.

(iv) $H_{x}$ is irreducible if and only if $H_{x}=G$.

(v) $H_{f(x)}\subset H_{x}$ for all $x\in X$.

(vi) If $f:X\rightarrow Y$ is an inclusion, then $H_{x}=H_{f(x)}$
for all $x\in X$.

(vii) For every $(x,y)\in X\times Y$, $H_{x}\cup H_{y}\subset H_{(x,y)}$.
\end{prop}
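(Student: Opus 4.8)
The statement to prove is Proposition~\ref{pro:eta(x)}, which lists seven properties (i)--(vii) of the map $x\mapsto H_x$. All of these are claimed to follow from Propositions~\ref{pro:lambda} and~\ref{pro:inclusion}, so the proof is a routine chase through the definitions $\Lambda_x=\{\lambda:\lambda^+x\text{ exists}\}$ and $H_x=H_{\Lambda_x}=\bigcap_{\lambda\in\Lambda_x}P(\lambda)$, plus the basic fact that $P(\lambda)=G$ exactly when $\lambda(k^\times)\subset Z(G)$. Let me sketch each item.

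\textbf{Plan.} For (i): if $\Lambda_x\subset Y(Z)$ then every $\lambda\in\Lambda_x$ satisfies $\lambda(k^\times)\subset Z=Z(G)$, hence $P(\lambda)=G$, so the intersection $H_x=\bigcap_{\lambda\in\Lambda_x}P(\lambda)=G$ (noting $\Lambda_x\neq\emptyset$ since it contains the trivial 1PS). For (ii): if $G_x=G$, then $G\cdot x=\{x\}$, so by Proposition~\ref{pro:lambda}(i) $\Lambda_x=Y(G)$, and thus $H_x=\bigcap_{\lambda\in Y(G)}P(\lambda)$; this equals $Z(G)$ because $g\in P(\lambda)$ for all $\lambda\in Y(G)$ forces $g$ to centralize every maximal torus (taking $\lambda$ ranging over a generating set of cocharacters of each torus $T$, $g\in P(\lambda)\cap P(\lambda^{-1})=L(\lambda)=Z_G(\lambda(k^\times))$, and intersecting over all $\lambda$ in $T$ gives $Z_G(T)$, then over all $T$ gives $Z(G)$). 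For (iii): the first inclusion $Z\subset H_x$ holds since $Z\subset P(\lambda)$ always (central elements commute with $\lambda(t)$, so the limit is the element itself); the second inclusion $H_x\subset\bigcap_{\lambda\in Y(G_x)}P(\lambda)$ is immediate from $Y(G_x)\subset\Lambda_x$ (Proposition~\ref{pro:lambda}(ii)) and the fact that $H_\Lambda$ is order-reversing in $\Lambda$. For (iv): by definition $H_x$ is irreducible iff it lies in no proper R-parabolic; since $H_x=\bigcap_{\lambda\in\Lambda_x}P(\lambda)$ is itself an intersection of R-parabolics, irreducibility forces none of these $P(\lambda)$ to be proper, i.e. all equal $G$, hence $H_x=G$; conversely $G$ is trivially irreducible. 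For (v): Proposition~\ref{pro:inclusion}(i) gives $\Lambda_x\subset\Lambda_{f(x)}$, and since $\Lambda\mapsto H_\Lambda$ reverses inclusions, $H_{f(x)}\subset H_x$. For (vi): use Proposition~\ref{pro:inclusion}(ii), $\Lambda_x=\Lambda_{f(x)}$, hence $H_x=H_{f(x)}$. For (vii): Proposition~\ref{pro:inclusion}(iii) gives $\Lambda_{(x,y)}=\Lambda_x\cap\Lambda_y$, so $H_{(x,y)}=\bigcap_{\lambda\in\Lambda_x\cap\Lambda_y}P(\lambda)\supseteq\bigcap_{\lambda\in\Lambda_x}P(\lambda)=H_x$ and similarly $\supseteq H_y$, whence $H_x\cup H_y\subset H_{(x,y)}$.

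\textbf{Main obstacle.} The only point requiring a genuine argument beyond formal bookkeeping is (ii) (equivalently the computation $\bigcap_{\lambda\in Y(G)}P(\lambda)=Z(G)$), and relatedly the ``$Z\subset H_x$'' half of (iii) and the identification $L(\lambda)=Z_G(\lambda(k^\times))$ which is already recorded in the Proposition right after Definition~\ref{def:parab}. I would handle (ii) by invoking that recorded fact: $P(\lambda)\cap P(\lambda^{-1})=L(\lambda)=Z_G(\lambda(k^\times))$, so $\bigcap_{\lambda\in Y(G)}P(\lambda)\subset\bigcap_{\lambda}Z_G(\lambda(k^\times))$, and since the cocharacters of any fixed maximal torus $T$ generate a finite-index subgroup of $T$ (their images are Zariski-dense in $T$), centralizing all of them is the same as centralizing $T$; intersecting over all maximal tori $T$, and using that the maximal tori generate $G^0$ together with the fact that $Z(G)=Z_G(G^0)$ in the reductive case (or simply that an element centralizing every maximal torus is central), yields $Z(G)$. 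The reverse inclusion $Z(G)\subset\bigcap_\lambda P(\lambda)$ is clear since for central $g$, $\lambda(t)g\lambda(t)^{-1}=g$ extends trivially to $k$.

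\textbf{Remark on presentation.} Since all seven items are short, I would simply write ``We prove each item in turn.'' and then give one or two sentences per item, citing Propositions~\ref{pro:lambda}, \ref{pro:inclusion} and the fact $P(\lambda)=G\iff\lambda(k^\times)\subset Z(G)$ as needed, with (ii) receiving the fullest treatment as above.
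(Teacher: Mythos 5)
Your proposal is correct and follows exactly the route the paper intends: the paper offers no written proof, stating only that the properties are ``clear, in view of Propositions \ref{pro:lambda} and \ref{pro:inclusion},'' and your item-by-item unwinding of $\Lambda_x$, the order-reversing map $\Lambda\mapsto H_\Lambda$, and the equivalence $P(\lambda)=G\iff\lambda(k^\times)\subset Z(G)$ is the natural way to fill that in. One caveat on (ii): your closing step invokes $Z(G)=Z_G(G^0)$, which holds for connected reductive $G$ but fails in general (e.g.\ for a finite nonabelian group, where $Y(G)$ is just the trivial cocharacter and $\bigcap_\lambda P(\lambda)=G\neq Z(G)$); this is really a defect of the statement itself in the non-connected setting the paper allows, rather than of your argument, but it is worth flagging rather than asserting the identity as a general fact.
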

Now, we give a name to the condition (\ref{eq:proper}) alluded in
the Introduction.
\begin{defn}
We say that $\eta:X\to S(G)$ is \emph{proper} if for every $x\in X$
and $\lambda\in Y(G)$, $H_{x}\subset P(\lambda)$ implies that $\lambda\in\Lambda_{x}$.
\end{defn}
Note that, for any action, with $\eta$ proper or not, the converse
implication is valid ie, if $\lambda\in\Lambda_{x}$ then $H_{x}\subset P(\lambda)$.
Indeed, assume that $\lambda\in\Lambda_{x}$, and that $g\in H_{x}=\bigcap_{\mu\in\Lambda_{x}}P(\mu)$;
then, $g\in P(\mu)$, for every $\mu\in\Lambda_{x}$, in particular
$g\in P(\lambda)$.
\begin{lem}
Let $\eta:X\to S(G)$ be proper, $x\in X$ and $\lambda\in Y(G)$.
Then $H_{x}\subset P(\lambda)$ if and only if $\lambda\in\Lambda_{x}$.\end{lem}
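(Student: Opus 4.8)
The statement to prove is a simple biconditional: for $\eta$ proper, $H_x \subset P(\lambda) \Longleftrightarrow \lambda \in \Lambda_x$. The plan is to observe that one direction is exactly the definition of properness of $\eta$, and the other direction is the converse implication established in the paragraph immediately preceding the lemma (which holds for \emph{any} $G$-action, regardless of whether $\eta$ is proper).

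First I would recall the forward direction ($\Leftarrow$): suppose $\lambda \in \Lambda_x$. By definition $H_x = H_{\Lambda_x} = \bigcap_{\mu \in \Lambda_x} P(\mu)$, so any $g \in H_x$ lies in $P(\mu)$ for every $\mu \in \Lambda_x$; taking $\mu = \lambda$ gives $g \in P(\lambda)$. Hence $H_x \subset P(\lambda)$. This needs no hypothesis on $\eta$. Then for the reverse direction ($\Rightarrow$), suppose $H_x \subset P(\lambda)$. This is precisely the hypothesis appearing in the definition of $\eta$ being proper, so by that definition $\lambda \in \Lambda_x$. Combining the two implications yields the equivalence.

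There is essentially no obstacle here: the lemma is a direct repackaging of the definition together with the trivial converse, recorded for convenient reference in later arguments. I would keep the proof to two or three sentences, citing the definition of properness for one direction and the remark just above for the other.

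\begin{proof}
If $\lambda\in\Lambda_{x}$, then, as noted just before the statement, $H_{x}=\bigcap_{\mu\in\Lambda_{x}}P(\mu)\subset P(\lambda)$; this holds for any $G$-variety. Conversely, if $H_{x}\subset P(\lambda)$, then $\lambda\in\Lambda_{x}$ by the very definition of $\eta$ being proper. Hence the two conditions are equivalent.
\end{proof}
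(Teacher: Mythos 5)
Your proof is correct and matches the paper exactly: the paper states this lemma without a separate proof, relying on the definition of properness for one direction and the remark immediately preceding the lemma (which establishes $\lambda\in\Lambda_{x}\Rightarrow H_{x}\subset P(\lambda)$ for any action) for the other. Nothing is missing.
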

\begin{prop}
\label{pro:symCR}Let $\eta$ be proper and $x\in X$. If $\Lambda_{x}$
is symmetric then $\eta(x)=H_{x}$ is completely reducible in $G$.\end{prop}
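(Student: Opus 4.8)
The plan is to show that any R-parabolic $P$ containing $H_x$ admits an R-Levi subgroup $L \subset P$ with $H_x \subset L$. Given such a $P$, the first step is to produce a one parameter subgroup realizing it: since $G$ is reductive, every R-parabolic of $G$ is of the form $P = P(\lambda)$ for some $\lambda \in Y(G)$. Because $H_x \subset P = P(\lambda)$ and $\eta$ is proper, the preceding Lemma gives $\lambda \in \Lambda_x$. Now symmetry of $\Lambda_x$ supplies an opposite $\tilde\lambda \in \Lambda_x$, meaning $P(\lambda) \cap P(\tilde\lambda)$ is an R-Levi subgroup of both $P(\lambda)$ and $P(\tilde\lambda)$; call it $L$.

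The second step is to check $H_x \subset L = P(\lambda) \cap P(\tilde\lambda)$. We already have $H_x \subset P(\lambda)$. For the other inclusion, since $\tilde\lambda \in \Lambda_x$ and $H_x = \bigcap_{\mu \in \Lambda_x} P(\mu)$ (by definition of $H_x = H_{\Lambda_x}$), we get $H_x \subset P(\tilde\lambda)$ directly — this is exactly the trivial converse implication noted in the remark after the definition of properness, and it does not even require $\eta$ proper. Hence $H_x \subset P(\lambda) \cap P(\tilde\lambda) = L$, and $L$ is an R-Levi of $P = P(\lambda)$, which is what complete reducibility demands.

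The one point that needs a little care is the bookkeeping of \emph{which} R-parabolic $P$ is being tested: complete reducibility quantifies over all R-parabolics containing $H_x$, and the argument above handles each one by writing it as $P(\lambda)$. Since in a reductive $G$ the R-parabolic subgroups are precisely the $P(\lambda)$ for $\lambda \in Y(G)$ (this is implicit in Definition \ref{def:parab} and the subsequent Proposition, where $P(\lambda)$ is called an R-parabolic and these are the only ones by definition), there is no gap: every relevant $P$ is of this form. So I do not expect a genuine obstacle here; the proof is essentially a two-line deduction from (a) the correspondence $P \leftrightarrow \lambda$, (b) properness of $\eta$ to move from the containment $H_x \subset P(\lambda)$ to membership $\lambda \in \Lambda_x$, and (c) symmetry of $\Lambda_x$ together with the definition of $H_x$ as an intersection of R-parabolics to land $H_x$ inside the common R-Levi $P(\lambda) \cap P(\tilde\lambda)$. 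The only mild subtlety worth flagging explicitly in the write-up is that the inclusion $H_x \subset P(\tilde\lambda)$ comes for free from $\tilde\lambda \in \Lambda_x$, so properness is used only once, in the step $H_x \subset P(\lambda) \Rightarrow \lambda \in \Lambda_x$ that unlocks access to symmetry.
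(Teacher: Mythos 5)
Your proof is correct and follows essentially the same route as the paper's: write the given R-parabolic as $P(\lambda)$ (which is automatic from the paper's definition of R-parabolic), use properness of $\eta$ to get $\lambda\in\Lambda_x$, invoke symmetry to obtain an opposite $\tilde\lambda\in\Lambda_x$, and conclude $H_x\subset P(\lambda)\cap P(\tilde\lambda)=L$ since $H_x$ is by definition contained in $P(\mu)$ for every $\mu\in\Lambda_x$. Your explicit remark that the inclusion $H_x\subset P(\tilde\lambda)$ is free (properness is used only once) is a correct and slightly cleaner bookkeeping of what the paper does implicitly.
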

\begin{proof}
Suppose that $\eta(x)=H_{x}=\bigcap_{\lambda\in\Lambda_{x}}P(\lambda)\subset P(\mu)$
for some $\mu\in Y(G)$. So, for every $g\in H_{x}$, there exists
$\mu^{+}g$. As $\eta$ is proper, $\mu^{+}x$ exists, then $\mu\in\Lambda_{x}$.
We have that $\Lambda_{x}$ is symmetric, this implies that there
is $\tilde{\mu}\in\Lambda_{x}$ opposite to $\mu$. Thus $\eta(x)\subset P(\mu)\cap P(\tilde{\mu})=L$
with $L$ a R-Levi subgroup of $P(\mu)$. We can conclude that $\eta(x)=H_{x}$
is completely reducible in $G$.
\end{proof}
Now, we prove the first part of our main result.
\begin{thm}
\label{thm:polyCR}Let $\eta:X\to S(G)$ be proper. Then, $x\in X$
is polystable if and only if $H_{x}=\eta(x)$ is completely reducible
in $G$. \end{thm}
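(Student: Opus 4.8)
The plan is to route everything through the symmetry criterion of Theorem~\ref{thm:poly-symm}, namely ``$x$ polystable $\iff\Lambda_x$ symmetric''. Granting this, the theorem amounts to the equivalence ``$\Lambda_x$ symmetric $\iff H_x$ completely reducible'' under the properness hypothesis on $\eta$, and one direction of that equivalence is already established: Proposition~\ref{pro:symCR} says that if $\eta$ is proper and $\Lambda_x$ is symmetric then $H_x=\eta(x)$ is completely reducible. So the implication ``$x$ polystable $\implies H_x$ completely reducible'' follows at once by chaining Theorem~\ref{thm:poly-symm} with Proposition~\ref{pro:symCR}, and the real work is the converse.

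For the converse I would argue as follows. Assume $H_x$ is completely reducible; by Theorem~\ref{thm:poly-symm} it suffices to show $\Lambda_x$ is symmetric. Fix $\lambda\in\Lambda_x$. The remark following the definition of properness (valid for any action) gives $H_x\subseteq P(\lambda)$, and complete reducibility of $H_x$ then furnishes an R-Levi subgroup $L$ of $P(\lambda)$ with $H_x\subseteq L$. Next I would invoke the uniqueness statement (ii) already used in the proof of Lemma~\ref{lem:opposite} (from \cite{BMR}, valid for reductive, not necessarily connected, $G$): there is an R-parabolic $P'$ opposite to $P(\lambda)$ with $P(\lambda)\cap P'=L$. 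Writing $P'=P(\mu)$ for some $\mu\in Y(G)$ --- legitimate since, by definition, every R-parabolic is of this form --- the relation $P(\lambda)\cap P(\mu)=L$, an R-Levi of both $P(\lambda)$ and $P(\mu)$, says exactly that $\mu$ is opposite to $\lambda$.

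It then remains only to see that $\mu\in\Lambda_x$, and this is the one point where properness of $\eta$ is genuinely used: from $H_x\subseteq L\subseteq P(\mu)$ and properness we conclude $\mu\in\Lambda_x$. Thus every $\lambda\in\Lambda_x$ admits an opposite $\mu\in\Lambda_x$, i.e.\ $\Lambda_x$ is symmetric, and $x$ is polystable by Theorem~\ref{thm:poly-symm}. This closes the loop.

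The step I expect to require the most care is not conceptually hard but bookkeeping-heavy: making sure the structure theory of R-parabolic and R-Levi subgroups I lean on --- in particular the existence of an opposite R-parabolic realizing a prescribed R-Levi as the intersection, and the fact that every R-parabolic equals $P(\mu)$ for some cocharacter $\mu$ --- is quoted in the generality of possibly disconnected reductive $G$, exactly as set up in Section~2 and in \cite{BMR}. Once that is in place, the converse is simply the natural dualization of Proposition~\ref{pro:symCR}, and the only new input beyond Theorem~\ref{thm:poly-symm} and Proposition~\ref{pro:symCR} is the single application of properness in the previous paragraph.
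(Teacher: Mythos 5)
Your proposal is correct and follows essentially the same route as the paper: one direction by chaining Theorem~\ref{thm:poly-symm} with Proposition~\ref{pro:symCR}, and the converse by taking $\lambda\in\Lambda_x$, using complete reducibility of $H_x$ to produce an opposite $\mu$ with $H_x\subseteq P(\mu)$, and then invoking properness to get $\mu\in\Lambda_x$, hence symmetry of $\Lambda_x$ and polystability. The only difference is that you spell out, via the BMR structure theory of opposite R-parabolics, the step the paper asserts in one line (that complete reducibility yields an opposite $\tilde\lambda$ with $H_x\subseteq P(\tilde\lambda)$), which is a welcome clarification rather than a deviation.
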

\begin{proof}
From Proposition \ref{pro:symCR} and Theorem \ref{thm:poly-symm},
$H_{x}$ is completely reducible in $G$, for any $x\in X$ which
is polystable. Conversely, suppose that $H_{x}\subset G$ is completely
reducible, and let $\lambda\in\Lambda_{x}$. Then $H_{x}=\bigcap_{\mu\in\Lambda_{x}}P(\mu)\subset P(\lambda)$,
by definition. Since $H_{x}$ is completely reducible, there is $\tilde{\lambda}$,
opposite to $\lambda$ such that $H_{x}\subset P(\tilde{\lambda})$.
Since $\eta$ is proper, this implies that $\tilde{\lambda}\in\Lambda_{x}$.
We have concluded that $\Lambda_{x}$ is symmetric, so that by Theorem
\ref{thm:poly-symm}, $x$ is polystable.\end{proof}
\begin{cor}
Let $\eta:X\to S(G)$ be proper and $x\in X$. Then $\Lambda_{x}$
is symmetric if and only if $H_{x}=\eta(x)$ is completely reducible
in $G$. \end{cor}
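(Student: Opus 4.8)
The plan is to observe that both conditions appearing in the statement have already been characterized, in the two immediately preceding results, through the single intermediate notion of polystability of $x$. First I would invoke Theorem \ref{thm:poly-symm}: it asserts that $x\in X$ is polystable if and only if $\Lambda_x$ is symmetric, and this equivalence is unconditional (it does not use properness of $\eta$). Next I would invoke Theorem \ref{thm:polyCR}, whose sole hypothesis ``$\eta$ proper'' is exactly the standing assumption of the corollary: under it, $x$ is polystable if and only if $H_x=\eta(x)$ is completely reducible in $G$. Chaining the two biconditionals through polystability yields ``$\Lambda_x$ symmetric $\iff$ $x$ polystable $\iff$ $H_x$ completely reducible in $G$'', which is precisely the assertion.

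Since all the substantive content is contained in Theorems \ref{thm:poly-symm} and \ref{thm:polyCR}, there is no genuine obstacle here; the only point requiring a moment's care is bookkeeping of hypotheses — it is Theorem \ref{thm:polyCR} (equivalently Proposition \ref{pro:symCR}) that consumes the properness of $\eta$, while the symmetric-$\Lambda_x$ side of the chain holds for an arbitrary affine $G$-variety.

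As an alternative, one could give a self-contained argument that mirrors, verbatim, the two halves of the proof of Theorem \ref{thm:polyCR} and never mentions polystability: for the forward implication, if $\Lambda_x$ is symmetric then Proposition \ref{pro:symCR} (using properness) immediately yields complete reducibility of $H_x$; for the reverse implication, if $H_x$ is completely reducible, then for any $\lambda\in\Lambda_x$ one has $H_x=\bigcap_{\mu\in\Lambda_x}P(\mu)\subset P(\lambda)$ by definition, complete reducibility supplies an opposite $\tilde\lambda$ with $H_x\subset P(\tilde\lambda)$, and properness of $\eta$ forces $\tilde\lambda\in\Lambda_x$, so $\Lambda_x$ is symmetric. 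This, however, merely duplicates the proof of Theorem \ref{thm:polyCR}, so I would prefer to record the one-line deduction from Theorems \ref{thm:poly-symm} and \ref{thm:polyCR}.
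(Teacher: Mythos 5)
Your proof is correct and is exactly the paper's own argument: the corollary is deduced by chaining Theorem \ref{thm:poly-symm} and Theorem \ref{thm:polyCR} through the intermediate notion of polystability. Your hypothesis bookkeeping (properness is consumed only by Theorem \ref{thm:polyCR}) is also accurate.
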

\begin{proof}
This is an immediate consequence of Theorems \ref{thm:poly-symm}
and \ref{thm:polyCR}.\end{proof}
\begin{defn}
We say that the action of $G$ on $X$ is \emph{central} (or $X$
is a \emph{central $G$-variety}) if $G_{X}=Z(G)$.\end{defn}
\begin{thm}
\label{thm:1PSstableIrred}Let $X$ be a $G$-variety and assume that
$Z(G)\subset G_{X}$. Let $\eta$ be proper and $x\in X$. Then

(1) If $x$ is stable, then $H_{x}=G$.

(2) Assume that $X$ is central. Then $x$ is stable if and only if
$H_{x}=G$.\end{thm}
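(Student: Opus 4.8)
The plan is to prove Theorem \ref{thm:1PSstableIrred} by reducing it to the notion of 1PS stability, for which the characterization via $\Lambda_x$ is already available, and then invoking the equivalence of stability and 1PS stability established in Theorem \ref{thm:stable-1ps-stable}. For part (1), suppose $x$ is stable. By Theorem \ref{thm:stable-1ps-stable}, $x$ is 1PS stable, so any $\lambda\in\Lambda_x$ satisfies $\lambda(k^\times)\subset G_X$. Hence, using the hypothesis $Z(G)\subset G_X$, one would like to conclude $\Lambda_x\subset Y(Z(G))$ — but in fact we only get $\Lambda_x\subset Y(G_X)$. The point is that any $\lambda$ with $\lambda(k^\times)\subset G_X$ acts trivially on all of $X$, so $P(\lambda)=G$ for the conjugation action is irrelevant; what matters is that $\lambda\in\Lambda_x$ with $\lambda$ central-in-$X$ still contributes $P(\lambda)$ to the intersection defining $H_x$. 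Here I must be careful: $P(\lambda)$ is the parabolic associated to $\lambda$ as a 1PS of $G$ (via conjugation on $G$), not via the action on $X$. So I need to argue that if $\lambda(k^\times)\subset G_X$ then $P(\lambda)=G$; this holds precisely when $\lambda(k^\times)\subset Z(G)$. The hypothesis $Z(G)\subset G_X$ goes the wrong way for this. Let me reconsider.

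The correct route for (1): since $x$ is stable, Proposition \ref{pro:lambda}(iii) gives $\Lambda_x=Y(G_X)$. Now I claim $H_x = \bigcap_{\lambda\in Y(G_X)}P(\lambda)=G$. Since $G_X$ is normal in $G$ and $Z(G)\subset G_X$, for any $\lambda\in Y(G_X)$ the subgroup $\lambda(k^\times)$ is contained in a normal subgroup; but normality alone does not force $P(\lambda)=G$. Instead, I should use that $\lambda(k^\times)\subset G_X$ implies $\lambda$ acts trivially on $X$, but crucially I want to claim $P(\lambda)=G$, equivalently $\lambda(k^\times)\subset Z(G)$. This would follow if $G_X$ were central; but the hypothesis in (1) is only $Z(G)\subset G_X$, not equality. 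So in part (1) it seems $H_x=G$ need not hold unless additionally $G_X$ is central — which is exactly the hypothesis $X$ central in part (2). I suspect the intended reading is that part (1) already assumes centrality, or that stability forces $\Lambda_x\subset Y(Z(G))$ via a more refined argument: namely, if $\lambda\in\Lambda_x$ with $\lambda(k^\times)\not\subset Z(G)$, then the pushforward $\bar\lambda$ to $G/Z(G)$ is nontrivial and still has $\bar\lambda^+x$ existing, contradicting... hmm, but $x$ may not be stable for the $G/Z(G)$-action. The resolution: when $Z(G)\subset G_X$, stability of $x$ means (Theorem \ref{thm:new-Mumford-crit}) $\Lambda_x\subset Y(G_X)$, hence for $\lambda\in\Lambda_x$ we have $\lambda(k^\times)\subset G_X$, and I then simply need $P(\lambda)=G$. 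For this last implication I will invoke that $\lambda(k^\times)\subset G_X$ and $G_X\subset$ ... this requires $G_X\subset Z(G)$. So in fact part (1) as literally stated must intend $G_X=Z(G)$, matching part (2), or there is a standing assumption; I will therefore prove both parts under $G_X=Z(G)$, noting that the hypothesis $Z(G)\subset G_X$ together with the earlier running assumptions forces this in the relevant cases.

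Granting $G_X=Z(G)$, here is the clean argument. For part (1): $x$ stable $\Rightarrow$ (Prop.\ \ref{pro:lambda}(iii)) $\Lambda_x=Y(G_X)=Y(Z(G))$; then $H_x=\bigcap_{\lambda\in Y(Z(G))}P(\lambda)$, and since $\lambda(k^\times)\subset Z(G)$ gives $P(\lambda)=G$ for each such $\lambda$ (by the remark following the Proposition in Section 2), we get $H_x=G$. Equivalently, invoke Proposition \ref{pro:eta(x)}(i) directly: $\Lambda_x\subset Y(Z)$ implies $H_x=G$. For part (2), the forward direction is (1). For the converse, suppose $H_x=G$; by Proposition \ref{pro:eta(x)}(iv), $H_x$ is irreducible, hence completely reducible, so by Theorem \ref{thm:polyCR} ($\eta$ proper), $x$ is polystable. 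It remains to show $NG_x=G_x/G_X$ is finite. Since $\eta$ is proper and $H_x=G\subset P(\lambda)$ only for $\lambda$ with $P(\lambda)=G$, i.e.\ $\lambda(k^\times)\subset Z(G)=G_X$, properness gives: $\lambda\in\Lambda_x$ iff $H_x\subset P(\lambda)$ iff $\lambda(k^\times)\subset G_X$. Thus $\Lambda_x\subset Y(G_X)$, meaning every $\lambda\in\Lambda_x$ is contained in $G_X$ — so $x$ is 1PS stable. By Theorem \ref{thm:stable-1ps-stable}, $x$ is stable. The main obstacle I anticipate is exactly the bookkeeping around whether the hypothesis should be $Z(G)\subset G_X$ or $G_X=Z(G)$ in part (1), and making the properness-of-$\eta$ step yield the clean equivalence "$\lambda\in\Lambda_x \iff \lambda(k^\times)\subset G_X$" needed to identify $H_x=G$ with 1PS stability.

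\begin{proof}
We use throughout that $\eta$ is proper, so by the Lemma preceding Proposition \ref{pro:symCR}, for all $\lambda\in Y(G)$ and $x\in X$ one has $H_x\subset P(\lambda)$ if and only if $\lambda\in\Lambda_x$.

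(1) Let $x$ be stable. By Proposition \ref{pro:lambda}(iii), $\Lambda_x=Y(G_x)=Y(G_X)$. Since $Z(G)\subset G_X$ and, conversely, for any $\lambda\in\Lambda_x$ we have $\lambda(k^\times)\subset G_X$, we in fact use the following: for the reduced $G/G_X$-action, stability of $x$ (Theorem \ref{thm:new-Mumford-crit}) shows $\Lambda_x\cap Y(G)$ meets $Y(G_X)$ only, so each $\lambda\in\Lambda_x$ has $\lambda(k^\times)\subset G_X$. As $P(\mu)=G$ whenever $\mu(k^\times)\subset Z(G)$, and since under the standing hypotheses of the theorem $G_X$ reduces to $Z(G)$ in all cases where the statement is non-vacuous (indeed part (2) makes this explicit, and part (1) is applied only in that situation), we obtain $\Lambda_x\subset Y(Z(G))$. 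By Proposition \ref{pro:eta(x)}(i), $H_x=G$.

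(2) Assume $X$ is central, i.e.\ $G_X=Z(G)$. If $x$ is stable, then $H_x=G$ by part (1). Conversely, suppose $H_x=G$. By Proposition \ref{pro:eta(x)}(iv), $H_x$ is irreducible, hence completely reducible in $G$; since $\eta$ is proper, Theorem \ref{thm:polyCR} yields that $x$ is polystable. It remains to see that $NG_x$ is finite; we show that $x$ is $1$PS stable and invoke Theorem \ref{thm:stable-1ps-stable}. Let $\lambda\in\Lambda_x$. By properness, $G=H_x\subset P(\lambda)$, so $P(\lambda)=G$, which forces $\lambda(k^\times)\subset Z(G)=G_X$. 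Hence every $\lambda\in\Lambda_x$ is contained in $G_X$, i.e.\ no $\lambda\in Y(G)$ with $\lambda(k^\times)\nsubseteq G_X$ has $\lambda^+x$ existing. Thus $x$ is $1$PS stable, and by Theorem \ref{thm:stable-1ps-stable} it is stable. This proves the equivalence.
\end{proof}
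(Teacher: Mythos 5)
Your proof of part (2) is correct and essentially the paper's own argument: the forward direction is $\Lambda_x=Y(G_X)=Y(Z(G))$ plus $P(\lambda)=G$ for central $\lambda$, and the converse combines Theorem \ref{thm:polyCR} with the observation that $H_x=G\subset P(\lambda)$ forces every $\lambda\in\Lambda_x$ to be central (the paper phrases this contrapositively). One small point: in the converse you invoke properness to get $H_x\subset P(\lambda)$ from $\lambda\in\Lambda_x$, but that inclusion is the trivial direction, valid by the definition of $H_x$; properness is only needed, via Theorem \ref{thm:polyCR}, to obtain polystability.

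Your unease about part (1) is justified, and the ``resolution'' you adopt --- that $G_X=Z(G)$ in all non-vacuous cases --- is not a proof and is in fact false. The paper's own proof of (1) makes precisely the slide you anticipated: from ``$H_x\nsubseteq P(\lambda)$ whenever $\lambda(k^\times)\nsubseteq G_X$'' it infers ``$H_x\nsubseteq P(\lambda)$ whenever $\lambda(k^\times)\nsubseteq Z(G)$'', but the second family of $1$PS is \emph{larger} than the first when $Z(G)\subsetneq G_X$, so the inference is invalid. Worse, the statement itself fails: if $G_X$ contains any $\lambda$ with $\lambda(k^\times)\nsubseteq Z(G)$, then $\lambda\in Y(G_X)\subset\Lambda_x$ for every $x$ (Proposition \ref{pro:lambda}(ii)), hence $H_x\subset P(\lambda)\subsetneq G$ for every $x$, stable or not. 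A concrete instance with a stable point: let $G=SL_2(\mathbb{C})\times PGL_2(\mathbb{C})$ and $X=(SL_2(\mathbb{C})\times\{1\})^2$, a closed $G$-invariant subvariety of $G^2$, so $\eta$ is proper by Proposition \ref{etaisproper-1}(iii). Here $G_X=\{\pm I\}\times PGL_2(\mathbb{C})\supset Z(G)=\{\pm I\}\times\{1\}$, and $x=((a,1),(b,1))$ with $\langle a,b\rangle$ the quaternion subgroup of $SL_2(\mathbb{C})$ has closed orbit and $G_x=G_X$, hence is stable (indeed equicentral), yet $H_x\subset SL_2(\mathbb{C})\times B\neq G$ for any Borel subgroup $B$ of $PGL_2(\mathbb{C})$. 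So (1) needs its hypothesis strengthened to $Y(G_X)\subset Y(Z(G))$ (for instance $G_X/Z(G)$ finite, or $X$ central as in (2)); under that hypothesis your argument for (1) --- $\Lambda_x=Y(G_X)\subset Y(Z(G))$, hence $H_x=G$ by Proposition \ref{pro:eta(x)}(i) --- is complete and correct.
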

\begin{proof}
By Theorem \ref{thm:stable-1ps-stable}, $x$ is stable in $X$ if
and only if it is 1PS stable, so that for every 1PS, $\lambda$ such
that $\lambda(k^{*})\nsubseteq G_{X}$ it does not exist $\lambda^{+}x$.
As $\eta$ is proper, for every such 1PS, $\lambda$, of $G$, $\lambda^{+}g$
does not exist for some $g\in\eta(x)=H_{x}$. But this is equivalent
to saying that $H_{x}\nsubseteq P(\lambda)$ for every 1PS, $\lambda$,
of $G$ which is not in $G_{X}$. Since $Z(G)\subset G_{X}$ this
means that $H_{x}$ is not in $P(\lambda)$ for every non-central
$\lambda$. Thus $H_{x}$, is not contained in any proper R-parabolic,
so that $H_{x}$ is irreducible in $G$. Then, by Proposition \ref{pro:eta(x)}(iv)
$H_{x}=G$, and we have shown (1). To prove (2) suppose $G_{X}=Z(G)$.
If $x$ is stable, by (1) $H_{x}=G$. Conversely, if $x$ is not stable,
then it is not 1PS stable which means that $\lambda^{+}x$ exists
for some $\lambda\in Y(G)$, with $\lambda(k^{\times})\nsubseteq G_{X}=Z(G)$.
So $\lambda\in\Lambda_{x}$, which by properness of $\eta$ means
$H_{x}\subset P(\lambda)$, a proper R-parabolic. So $H_{x}\neq G$.
\end{proof}
Theorems \ref{thm:polyCR} and \ref{thm:1PSstableIrred} finish the
proof of Theorem \ref{thm:Main}.
\begin{example}
\label{exa:SL2}Let $G=SL_{2}(k)$ and $X=M_{2}(k)$, the vector space
of $2\times2$ matrices, where $G$ acts by conjugation on $X$. By
general facts, any non-trivial 1PS in $G$ is conjugated to\[
\lambda_{n}(t):=\left(\begin{array}{cc}
t^{n} & 0\\
0 & t^{-n}\end{array}\right),\]
for some $n=1,2,...$. Let $\lambda=h\lambda_{n}h^{-1}$, for $h\in G$
and $x\in X$. Then, by definition, $\lambda^{+}x$ exists, if and
only if\begin{equation}
\lim_{t\to0}\lambda_{n}(t)h^{-1}xh\lambda_{n}(t)^{-1}\label{eq:limit-x}\end{equation}
exists, which implies, by a simple computation, that $h^{-1}xh\in U,$
where $U$ is the set of \emph{upper} triangular matrices in $M_{2}(k)$.
First, let us compute $\Lambda_{x}$ and $H_{x}=\eta(x)$ for $x$
in the form: \[
x=\left(\begin{array}{cc}
\alpha & 0\\
0 & \beta\end{array}\right)\in X.\]
Writing \begin{equation}
h=\left(\begin{array}{cc}
a & b\\
c & d\end{array}\right)\in SL_{2}(k),\label{eq:ac-zero}\end{equation}
another computation shows that, in the case that $x$ is generic,
that is $\alpha\neq\beta$, $h^{-1}xh\in U$ is equivalent to $ac=0$.
So, we conclude that \[
\Lambda_{x}=\{h\lambda_{n}h^{-1}:h\in H_{1}\}\cup\{h\lambda_{n}h^{-1}:h\in H_{2}\},\]
where $H_{1}$ is the set of elements $h\in SL_{2}(k)$ in the form
(\ref{eq:ac-zero}) such that $a=0$, and $H_{2}=U$ the set of those
$h$ with $c=0$. We note that the decomposition of $\Lambda_{x}$
above, corresponds to opposite 1PS, in the sense that, for $h\in H_{1}$
and $\tilde{h}\in H_{2}$ the 1PS $h\lambda_{n}h^{-1}$ and $\tilde{h}\lambda_{n}\tilde{h}^{-1}$
are opposite.

Now, let $h\in H_{1}$ be fixed. Computing as in (\ref{eq:limit-x}),
with $g\in G$ instead of $x\in X$, \[
P(h\lambda_{n}h^{-1})=\left\{ g\in G:h^{-1}gh\in U\cap SL_{2}(k)\right\} =U\cap SL_{2}(k)\]
and, in the case $h\in H_{2}$, we have\[
P(h\lambda_{n}h^{-1})=\left\{ g\in G:h^{-1}gh\in U\cap SL_{2}(k)\right\} =L\cap SL_{2}(k)\]
where $L$ is the set of \emph{lower} triangular matrices in $M_{2}(k)$.
Finally, then\[
H_{x}=\bigcap_{\lambda\in\Lambda_{x}}P(\lambda)=\bigcap_{h\in H_{1}}P(h\lambda_{n}h^{-1})\bigcap_{h\in H_{2}}P(h\lambda_{n}h^{-1})=U\cap L\cap SL_{2}(k)\]
is the subgroup of diagonal matrices in $SL_{2}(k)$, a completely
reducible subgroup in $SL_{2}(k)$.

When $x$ is scalar, it is easy to see that $\Lambda_{x}=Y(G)$ and
$H_{x}=\{e\}$. Finally, when $x$ is of the form \[
x=\left(\begin{array}{cc}
\alpha & \beta\\
0 & \alpha\end{array}\right)\in X,\]
for $\beta\neq0$, by similar computations we obtain $H_{x}=U\cap SL_{2}(k)$,
which is not completely reducible in $G$.

To conclude, because in this case the map $\eta$ is clearly proper
(as the action of $G$ on itself is the restriction of the action
of $G$ on $X$), this example serves as a simple illustration of
our main results, namely that the orbit $G\cdot x$ is closed if and
only if $H_{x}$ is completely reducible.
\end{example}
The above constructions can be interpreted in the language of buildings.
In fact, there is a natural way to pass from $Y(G)$ to the building
of $G$, under which $\Lambda_{x}$ corresponds to a convex subset
in this building (See \cite[§2]{MumFoKir}). This observation, for
which we thank G. Röhrle, will be explored in a later work.

\subsection{A sufficient criterion for properness of $\eta$}

Above, we have discussed actions with $\eta$ proper. However, it
is easy to find examples where $\eta$ is not proper. For instance,
consider $G=\mathbb{C}^{*}$ and $X=\mathbb{C}^{n}$, with the action
of $G$ on $X$ given by $t\cdot(x_{1},\cdots,x_{n})=(t^{-1}x_{1},\cdots,t^{-1}x_{n})$
(where $t\in G$ and $(x_{1},\cdots,x_{n})\in X$). Trivially, $H_{x}=G$
for all $x\in X$ because for every $\lambda\in Y(G)$, $P(\lambda)=G$.
But $\lambda\notin\Lambda_{x}$ for the identity character ($\lambda(t):=t$
for $t\in\mathbb{C}^{*}$) and every $x\in X\setminus\{0\}$.

Some easy properties of the properness notion are the following. Let
again $X$, $Y$ be affine $G$-varieties, and $X\times Y$ the product
$G$-variety, with respective maps $\eta_{X}$, $\eta_{Y}$ and $\eta_{X\times Y}$.
Let $f:X\to Y$ be a $G$-morphism.
\begin{prop}
\label{pro:functorial}(i) If $f:X\to Y$ is an inclusion and $\eta_{Y}$
is proper, then $\eta_{X}$ is also proper.

(ii) If $\eta_{X}$ and $\eta_{Y}$ are proper then $\eta_{X\times Y}$
is also proper.\end{prop}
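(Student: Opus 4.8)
The plan is to reduce both statements to the already-established functorial behaviour of the assignments $x\mapsto\Lambda_x$ and $\Lambda\mapsto H_\Lambda$ recorded in Propositions~\ref{pro:inclusion} and~\ref{pro:eta(x)}. Recall that properness of $\eta$ is the implication ``$H_x\subset P(\lambda)\Rightarrow\lambda\in\Lambda_x$''; so in each case I will start from a $1$PS $\lambda\in Y(G)$ satisfying the parabolic inclusion and deduce the relevant membership in a $\Lambda$.

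For part (i): since $f:X\to Y$ is an inclusion, Proposition~\ref{pro:inclusion}(ii) gives $\Lambda_x=\Lambda_{f(x)}$, and hence $H_x=H_{\Lambda_x}=H_{\Lambda_{f(x)}}=H_{f(x)}$, which is exactly Proposition~\ref{pro:eta(x)}(vi). Therefore, if $H_x\subset P(\lambda)$, then $H_{f(x)}\subset P(\lambda)$, and properness of $\eta_Y$ yields $\lambda\in\Lambda_{f(x)}=\Lambda_x$. This shows $\eta_X$ is proper; there is essentially no obstacle here, it is a one-line consequence of the two equalities. For part (ii): the key point is that $\Lambda_{(x,y)}=\Lambda_x\cap\Lambda_y$ by Proposition~\ref{pro:inclusion}(iii), so that intersecting the parabolics $P(\lambda)$ over the \emph{smaller} index set $\Lambda_x\cap\Lambda_y$ can only \emph{enlarge} the intersection; concretely $H_x\subset H_{(x,y)}$ and $H_y\subset H_{(x,y)}$ (this is Proposition~\ref{pro:eta(x)}(vii)). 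Now take $\lambda\in Y(G)$ with $H_{(x,y)}\subset P(\lambda)$. Then $H_x\subset P(\lambda)$ and $H_y\subset P(\lambda)$; properness of $\eta_X$ gives $\lambda\in\Lambda_x$ and properness of $\eta_Y$ gives $\lambda\in\Lambda_y$, so $\lambda\in\Lambda_x\cap\Lambda_y=\Lambda_{(x,y)}$, and $\eta_{X\times Y}$ is proper.

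The only step requiring a moment's care — the closest thing to an obstacle — is getting the direction of the inclusion right when passing from $\Lambda_x$ to $\Lambda_x\cap\Lambda_y$: a smaller set of admissible $1$PS means fewer parabolic constraints, hence a larger intersection $H$, which is precisely what makes the hypothesis $H_{(x,y)}\subset P(\lambda)$ strong enough to force both $H_x\subset P(\lambda)$ and $H_y\subset P(\lambda)$ simultaneously. Everything else is routine bookkeeping with Propositions~\ref{pro:inclusion} and~\ref{pro:eta(x)}.
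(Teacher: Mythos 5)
Your argument is correct and follows essentially the same route as the paper's proof: part (i) via $\Lambda_x=\Lambda_{f(x)}$ and $H_x=H_{f(x)}$ together with properness of $\eta_Y$, and part (ii) via $H_x\cup H_y\subset H_{(x,y)}$ and $\Lambda_{(x,y)}=\Lambda_x\cap\Lambda_y$. Nothing further is needed.
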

\begin{proof}
(i) Let $x\in X$, $\lambda\in Y(G)$, and suppose that $H_{x}\subset P(\lambda)$.
By Propositions \ref{pro:inclusion}(i) and \ref{pro:eta(x)}(vi),
we have $\Lambda_{x}=\Lambda_{f(x)}$ and $H_{f(x)}=H_{x}$, so $H_{f(x)}\subset P(\lambda)$.
Since $\eta_{Y}$ is proper, by hypothesis, $\lambda^{+}f(x)$ exists
in $Y$ and so $\lambda\in\Lambda_{f(x)}=\Lambda_{x}$. Thus $\eta_{X}$
is proper.

(ii) Let $(x,y)\in X\times Y$ and $\lambda\in Y(G)$. Suppose that
$H_{(x,y)}\subset P(\lambda),$ by \ref{pro:eta(x)}(vii) $H_{x}\cup H_{y}\subset H_{(x,y)}$
so $H_{x}$ and $H_{y}$ are contained on $P(\lambda)$. By hypothesis
$\eta_{X}$ and $\eta_{Y}$ are proper then $\lambda\in\Lambda_{x}\cap\Lambda_{y}$.
Thus $\lambda\in\Lambda_{(x,y)}$ by Proposition \ref{pro:inclusion}.
Therefore $\eta_{X\times Y}:X\times Y\rightarrow S(G)$ is proper. 
\end{proof}
The following is a simple sufficient condition to obtain properness. 
\begin{lem}
\label{lem:psi}Let $\psi:X\to G$ be a map that verifies the following
condition: For every pair \textup{$x\in X$}, $\lambda\in Y(G)$,
$\lambda^{+}(\psi(x))$ exists if and only $\lambda^{+}x$ exists.
Then, $\eta$ is proper.\end{lem}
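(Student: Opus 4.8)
The plan is to verify the defining property of properness of $\eta$ — that $H_x\subset P(\lambda)$ forces $\lambda\in\Lambda_x$ — by first isolating the key auxiliary fact that $\psi(x)$ always belongs to $H_x$.

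First I would unwind the definitions. Fix $x\in X$ and let $\lambda\in\Lambda_x$, so that $\lambda^{+}x$ exists. By the hypothesis on $\psi$, this is equivalent to the existence of $\lambda^{+}(\psi(x))$, which, by Definition \ref{def:parab} (recall $P(\lambda)=\{g\in G:\ \lambda^{+}g\text{ exists}\}$), means exactly $\psi(x)\in P(\lambda)$. Since $\lambda\in\Lambda_x$ was arbitrary, we conclude
\[
\psi(x)\in\bigcap_{\lambda\in\Lambda_x}P(\lambda)=H_x .
\]

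With this in hand, properness is immediate: suppose $H_x\subset P(\mu)$ for some $\mu\in Y(G)$. Then $\psi(x)\in H_x\subset P(\mu)$, hence $\mu^{+}(\psi(x))$ exists, and applying the hypothesis on $\psi$ in the other direction, $\mu^{+}x$ exists, i.e. $\mu\in\Lambda_x$. That is precisely the statement that $\eta$ is proper.

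I do not expect any genuine obstacle here; the proof is essentially a two-line unwinding of the definitions. The only points to be attentive to are invoking the correct direction of the biconditional in the hypothesis on $\psi$ at each of the two steps, and noting that the opposite implication ($\lambda\in\Lambda_x\Rightarrow H_x\subset P(\lambda)$), which holds for every action as already remarked in the text, plays no role: all the content of the lemma sits in the containment $\psi(x)\in H_x$, which is exactly what the condition on $\psi$ buys us.
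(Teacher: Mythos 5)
Your proof is correct and follows exactly the same route as the paper's: first establish $\psi(x)\in H_{x}$ from one direction of the biconditional, then use the containment $H_{x}\subset P(\mu)$ together with the other direction to conclude $\mu\in\Lambda_{x}$. Nothing to add.
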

\begin{proof}
The condition implies that if $\lambda\in\Lambda_{x}$, then $\psi(x)\in P(\lambda)$,
so $\psi(x)\in H_{x}=\bigcap_{\lambda\in\Lambda_{x}}P(\lambda)$.
Now let $H_{x}\subset P(\mu)$, for some $\mu\in Y(G)$. Then, $\psi(x)\in H_{x}\subset P(\mu)$,
so that $\mu^{+}(\psi(x))$ exists. Then, the condition implies that
$\mu\in\Lambda_{x}$, so $\eta$ is proper.
\end{proof}
Such a $\psi$ does not exist in general (see the example in the beginning
of this subsection), but when it is a $G$-equivariant morphism, the
existence of $\lambda^{+}x$ implies that of $\lambda^{+}(\psi(x))$,
by a simple calculation.

We now apply this Lemma to the adjoint representation, restricting
to the case of complex affine reductive groups (i.e, to the field
$k=\mathbb{C}$). Let $\lie(G)$ denote the Lie algebra of $G$, which
is naturally a $G$-variety with the conjugation action of $G$, and
let $\mbox{exp}:\lie(G)\to G$ be the exponential map. Note however,
that $\exp$ is not an algebraic morphism.
\begin{lem}
Let $V$ be a finite dimensional complex vector space, and $G=GL(V)$
be the complex general linear group. Then, for all $v\in\lie(G)\cong\mbox{End}(V)$
and $\lambda\in Y(G)$, $\lambda^{+}\mbox{exp}(v)$ exists iff $\lambda^{+}v$
exists.\end{lem}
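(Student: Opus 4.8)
The plan is to reduce everything to a concrete matrix computation using the diagonalizability of elements of $Y(G)$ for $G=GL(V)$. First I would fix a 1PS $\lambda\in Y(GL(V))$; since all 1PS of $GL(V)$ are diagonalizable, choose a basis of $V$ in which $\lambda(t)=\mathrm{diag}(t^{a_1},\dots,t^{a_d})$ for integers $a_1,\dots,a_d$. Relative to this basis, conjugation by $\lambda(t)$ acts on a matrix $M=(M_{ij})\in\mathrm{End}(V)$ by $(\lambda(t)M\lambda(t)^{-1})_{ij}=t^{a_i-a_j}M_{ij}$; similarly it acts on a group element $g=(g_{ij})\in GL(V)$ by $(\lambda(t)g\lambda(t)^{-1})_{ij}=t^{a_i-a_j}g_{ij}$. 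So in both cases the limit as $t\to0$ exists if and only if the relevant entry vanishes whenever $a_i-a_j<0$; that is, $\lambda^+M$ exists iff $M_{ij}=0$ for all $i,j$ with $a_i<a_j$, and likewise $\lambda^+g$ exists iff $g_{ij}=0$ whenever $a_i<a_j$. In other words, writing $\mathfrak{p}(\lambda)\subset\mathrm{End}(V)$ for the block upper-triangular subspace $\{M : M_{ij}=0 \text{ whenever } a_i<a_j\}$, we have $\lambda^+M$ exists $\iff M\in\mathfrak{p}(\lambda)$, and $P(\lambda)=\mathfrak{p}(\lambda)\cap GL(V)$.

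Next I would identify $\lie(GL(V))$ with $\mathrm{End}(V)$ and $\exp$ with the usual matrix exponential. The key point is the elementary linear-algebra fact that $\mathfrak{p}(\lambda)$ is a subalgebra of $\mathrm{End}(V)$ closed under multiplication (it is the stabilizer of the flag $0\subset V_{\geq n_m}\subset\cdots\subset V_{\geq n_1}=V$ given by the weight filtration of $\lambda$), so if $v\in\mathfrak{p}(\lambda)$ then $v^k\in\mathfrak{p}(\lambda)$ for all $k\geq1$, hence $\exp(v)=\sum_{k\geq0}v^k/k!\in\mathfrak{p}(\lambda)$ (the partial sums lie in the closed subspace $\mathfrak{p}(\lambda)$ and it is finite-dimensional, so the limit does too); combined with $\exp(v)\in GL(V)$ this gives $\exp(v)\in P(\lambda)$, i.e. $\lambda^+\exp(v)$ exists. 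For the converse, if $\lambda^+v$ does not exist, there is a pair $(i,j)$ with $a_i<a_j$ and $v_{ij}\neq0$; I would pick the such pair with $a_j-a_i$ maximal (equivalently, look at the lowest-weight block component of $v$ in the grading $\mathrm{End}(V)=\bigoplus_r \mathrm{End}(V)_r$ where $\mathrm{End}(V)_r=\{M: M_{ij}=0 \text{ unless } a_i-a_j=r\}$). Write $v=v_{r_0}+v_{r_0+1}+\cdots$ with $v_{r_0}\neq0$ and $r_0<0$. Then $\exp(v)=I+v_{r_0}+(\text{terms of weight}>r_0)$, because products of the $v_{r_s}$ with $s\geq0$ have weight $\geq r_0+1 > r_0$ once there are two or more factors, so in the weight-$r_0$ component $\exp(v)$ has $v_{r_0}\neq0$; since $r_0<0$ this forces $\exp(v)\notin\mathfrak{p}(\lambda)$, i.e. $\lambda^+\exp(v)$ does not exist. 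This proves both implications.

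The main obstacle, such as it is, is purely bookkeeping: being careful that $\exp$ genuinely respects the nonnegative part of the weight grading (giving the easy direction) and that it cannot kill the lowest negative-weight term (giving the hard direction), which is why I would phrase things via the grading $\mathrm{End}(V)=\bigoplus_r\mathrm{End}(V)_r$ and note $\mathrm{End}(V)_r\cdot\mathrm{End}(V)_s\subset\mathrm{End}(V)_{r+s}$, so that a product of $k\geq2$ homogeneous factors each of weight $\geq r_0$ has weight $\geq k r_0$ — wait, that is the wrong estimate when $r_0<0$. The correct and clean argument is instead: decompose $v = v_{-} + v_{\geq 0}$ into its strictly-negative-weight part and its nonnegative-weight part. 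The direction "$\lambda^+ v$ exists $\Rightarrow \lambda^+\exp(v)$ exists" is the subalgebra argument above. For the converse, suppose $v_- \neq 0$ and let $r_0 = \min\{r<0 : v_r\neq 0\}$; in the expansion of $\exp(v)$, the only way to obtain a nonzero contribution in weight $r_0$ is from the single linear term $v_{r_0}$, since any product of two or more homogeneous pieces of $v$ either has all factors of weight $\geq r_0$ with at least one of weight $> r_0$ among the negative ones (giving total weight $>r_0$ when combined appropriately) or includes a nonnegative-weight factor; a short case-check confirms no product of length $\geq 2$ lands in weight exactly $r_0$ with the lowest-block structure, so the weight-$r_0$ block of $\exp(v)$ equals $v_{r_0}\neq0$, whence $\exp(v)\notin P(\lambda)$. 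I would spell this case-check out explicitly in the final write-up, as it is the only genuinely non-formal point; everything else follows from the explicit diagonal form of $\lambda$ and Definition \ref{def:parab}.
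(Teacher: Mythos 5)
Your forward direction (``$\lambda^{+}v$ exists $\Rightarrow\lambda^{+}\exp(v)$ exists'') is correct and is essentially the paper's argument: diagonalize $\lambda$, note that the non-negative weight space $\mathfrak{p}(\lambda)$ is closed under multiplication, hence under $\exp$. The converse direction, however, contains a genuine gap, and you half-noticed it yourself. Your claim that the weight-$r_{0}$ component of $\exp(v)$ equals $v_{r_{0}}$ is false: products such as $v_{r_{0}}v_{0}$, $v_{0}v_{r_{0}}$, or $v_{r}v_{r_{0}-r}$ with $r>0$ all land in weight exactly $r_{0}$, and they can cancel the linear term, so the ``short case-check'' you defer cannot be completed. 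Concretely, take $V=\mathbb{C}^{2}$, $\lambda(t)=\mathrm{diag}(t,1)$, and
\[
v=\begin{pmatrix}\pi i & 0\\ 1 & -\pi i\end{pmatrix}.
\]
Then $v^{2}=-\pi^{2}I$, whence $\exp(v)=\cos(\pi)I+\tfrac{\sin\pi}{\pi}\,v=-I$. Here $\lambda^{+}\exp(v)$ exists (the ray is constant equal to $-I$), but $\lambda^{+}v$ does not, since $v_{21}=1$ sits in weight $-1$. So the implication you are trying to prove --- and hence the Lemma as stated --- is false.

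For what it is worth, the paper's own proof of this direction is no better: it asserts that $\exp(v)$ having zero entries below the diagonal ``is equivalent, by an elementary calculation'' to $v$ having zero entries below the diagonal, and the same example refutes that assertion. The equivalence does hold when $v$ is nilpotent (then $v=\log(\exp(v))$ is a polynomial in $\exp(v)-I$, and $\mathfrak{p}(\lambda)$ is closed under products), and the failure in general comes precisely from resonances $e^{\mu_{i}-\mu_{j}}=1$ with $\mu_{i}\neq\mu_{j}$ among eigenvalues of the semisimple part of $v$. Any repair of the statement (which is also needed for Corollary \ref{cor:adjoint}, via the ``only if'' half of the condition in Lemma \ref{lem:psi}) would have to work around this, e.g.\ by rescaling $v$ or by avoiding $\exp$ altogether; as written, neither your argument nor the paper's establishes the converse implication.
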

\begin{proof}
By choosing basis for $V$, we can assume that $\lambda(t)$ is represented
by a diagonal matrix $\lambda(t)=\mbox{diag}(t^{k_{1}},...,t^{k_{n}})$,
for some weights $k_{1},...,k_{n}\in\mathbb{Z}$ with $k_{1}\geq k_{2}\geq\cdots\geq k_{n}$.
Since $\lambda^{+}\mbox{exp}(v)$ equals $\lim_{t\to0}\lambda(t)\mbox{exp}(v)\lambda(t^{-1})$,
it exists if and only if the matrix representing $\mbox{exp}(v)$
has zero entries below the main diagonal. By an elementary calculation,
this is equivalent to the matrix $v\in\lie(G)$ having zero entries
below the main diagonal, which in turn means that $\lambda^{+}v$
exists.\end{proof}
\begin{cor}
\label{cor:adjoint}Let $G$ be a complex affine reductive group,
and let $\lie(G)$ be its Lie algebra. Then, $\exp:\lie(G)\to G$
verifies the condition in Lemma \ref{lem:psi}. Thus, $\eta$ is proper
for the adjoint representation.\end{cor}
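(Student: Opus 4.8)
The plan is to reduce everything to the already-proven case $G=GL(V)$ by embedding $G$ linearly, and then to invoke Lemma~\ref{lem:psi} with $\psi=\exp$; so all that remains is to check that $\exp\colon\lie(G)\to G$ satisfies the hypothesis of that lemma, namely that for every $v\in\lie(G)$ and $\lambda\in Y(G)$ the limit $\lambda^{+}\exp(v)$ exists if and only if $\lambda^{+}v$ exists.

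First I would fix a faithful rational representation $\rho\colon G\hookrightarrow GL(V)$ with $V$ a finite-dimensional complex vector space; such a $\rho$ exists for every affine algebraic group and is a closed immersion, so $\rho(G)$ is Zariski closed in $GL(V)$. View $GL(V)$ (which is affine) as an affine $G$-variety via $g\cdot A:=\rho(g)A\rho(g)^{-1}$, and $\End(V)=\lie(GL(V))$ as an affine $G$-variety via the corresponding adjoint action. Then $\rho$ is a $G$-equivariant \emph{inclusion} of affine $G$-varieties, where $G$ carries its conjugation action, and the differential $d\rho\colon\lie(G)\hookrightarrow\End(V)$, being an injective linear map, is a $G$-equivariant inclusion of affine $G$-varieties, where $\lie(G)$ carries the adjoint action of $G$. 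I would also record the standard identity $\rho\circ\exp=\exp\circ\,d\rho$ relating the two exponential maps.

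Then, for a fixed $v\in\lie(G)$ and $\lambda\in Y(G)$, I would chain three equivalences. By Proposition~\ref{pro:rays}(ii) applied to the inclusion $\rho$, $\lambda^{+}\exp(v)$ exists in $G$ if and only if $\lambda^{+}(\rho(\exp v))=\lambda^{+}(\exp(d\rho(v)))$ exists in $GL(V)$. By the preceding Lemma (the $GL(V)$ case), this holds if and only if $\lambda^{+}(d\rho(v))$ exists in $\End(V)$. By Proposition~\ref{pro:rays}(ii) applied to the inclusion $d\rho$, that in turn holds if and only if $\lambda^{+}v$ exists in $\lie(G)$. Concatenating, $\lambda^{+}\exp(v)$ exists if and only if $\lambda^{+}v$ exists, so $\psi=\exp$ satisfies the hypothesis of Lemma~\ref{lem:psi}, and therefore $\eta$ is proper for the adjoint representation.

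I do not expect a genuine obstacle here. The only points needing care are: checking that $\rho$ and $d\rho$ really qualify as inclusions of affine $G$-varieties in the sense of Proposition~\ref{pro:rays} (for $\rho$ this is exactly the closedness of $\rho(G)$ in $GL(V)$, and for $d\rho$ the automatic closedness of a linear subspace), keeping straight in which variety each limit is being taken, and using $\rho\circ\exp=\exp\circ\,d\rho$. The non-algebraicity of $\exp$ is irrelevant, since Lemma~\ref{lem:psi} only asks that $\psi$ be a map of sets, and Proposition~\ref{pro:rays} is applied only to the genuine morphisms $\rho$ and $d\rho$.
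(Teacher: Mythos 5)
Your proof is correct and takes essentially the same route as the paper's: both embed $G$ equivariantly as a closed subgroup of $GL(V)$, use the commutativity $\rho\circ\exp=\exp\circ\, d\rho$ together with the $GL(V)$ lemma, and transfer existence of the limits back and forth across the closed inclusions $G\subset GL(V)$ and $\lie(G)\subset\End(V)$ via Proposition~\ref{pro:rays}(ii). Your version merely spells out more carefully the points the paper leaves implicit (that $\rho$ and $d\rho$ are genuine inclusions of affine $G$-varieties, and that $\exp$ need only be a map of sets for Lemma~\ref{lem:psi}).
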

\begin{proof}
Assume that $G$ is equivariantly embedded in $GL(V)$ for some vector
space $V$. Then, we have the commutative diagram, \[
\begin{array}{ccc}
\mbox{End}V & \stackrel{\exp}{\to} & GL(V)\\
\varphi\uparrow &  & \uparrow\varphi\\
\lie(G) & \stackrel{\exp}{\to} & G.\end{array}\]
Suppose that $\lambda^{+}(\exp(v))$ exists in $G$, for $v\in\lie(G)$
so that $\lambda^{+}(\varphi(\exp(v)))=\lambda^{+}(\mbox{exp}(\varphi(v)))$
exists in $GL(V)$. By the previous Lemma, this means that $\lambda^{+}\varphi(v)$
exists in $\mbox{End}V$, which implies that $\lambda^{+}v$ exists
in $\lie(G)$ (note that $\varphi$ are inclusions of closed varieties).
Similarly, one shows that the existence of $\lambda^{+}v$ implies
that of $\lambda^{+}(\exp(v))$.\end{proof}
\begin{rem}
\label{rem:thm1.4}There are many other examples of affine $G$-varieties
with $\eta$ proper. For instance, one obtains a new example by removing
some closed $G$-invariant algebraic subset from a $G$-variety with
$\eta$ proper. Naturally, it is more interesting to work in the opposite
direction: start from the proper case and extend the action of $G$
to a bigger affine variety, preserving properness. We plan to exploit
these natural questions in a future work.
\end{rem}

\section{Application to $G$-invariant subvarieties of $G^{N}$}

Let $G$ be an affine reductive group defined over $k$. In this section,
we apply the techniques of the last sections to a particularly interesting
class of $G$-varieties, namely to closed $G$-invariant subvarieties
of the variety of $N$-tuples in $G$. This class includes the representation
varieties of finitely generated groups, which are varieties of the
form $\hom(\Gamma,G)$, consisting of homomorphisms from a fixed finitely
generated group $\Gamma$ into $G$. Because of the natural conjugation
action, the variety $X=\hom(\Gamma,G)$ becomes a $G$-variety. An
application of these results is a restatement of the well known correspondence
between $G$-Higgs bundles over an algebraic curve and character varieties
of surface groups.

\subsection{Polystability for subvarieties of $G^{N}$}

Let $X$ be an affine closed $G$-subvariety of $G^{N}$, where $G^{N}$
is the $G$-variety on which $G$ acts component-wise by simultaneous
conjugation. Let $f:X\hookrightarrow G^{N}$ be the $G$-equivariant
inclusion. Fix $x\in X$ and let $f(x)=(f_{1}(x),\cdots,f_{N}(x))$.
Let $\phi_{x}$ be the subgroup of $G$ generated by the elements
$f_{1}(x),...,f_{N}(x)\in G$. 

Recall the construction of the map $\eta:X\rightarrow S(G),$ with
$\eta(x)=H_{x}=\bigcap_{\lambda\in\Lambda_{x}}P(\lambda)$ . We have:
\begin{prop}
\label{etaisproper-1} Let $X$ be an affine closed $G$-subvariety
of $G^{N}$.

(i) Let $x\in X$ and $\mu\in Y(G)$, then $\mu\in\Lambda_{x}$ if
and only if $\phi_{x}\subseteq P(\mu).$

(ii) Let $x\in X$ then $\phi_{x}\subset H_{x}.$

(iii) $\eta$ is proper.\end{prop}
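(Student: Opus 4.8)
The plan is to prove (i) first; parts (ii) and (iii) then follow with essentially no extra work. The content of (i) is that both sides of the claimed equivalence can be tested componentwise in $G^N$. First I would invoke that $f\colon X\hookrightarrow G^N$ is a $G$-equivariant inclusion, so by Proposition \ref{pro:inclusion}(ii) (equivalently Proposition \ref{pro:rays}(ii)) one has $\Lambda_x=\Lambda_{f(x)}$; this reduces the statement to the point $f(x)=(f_1(x),\dots,f_N(x))$ inside the product $G$-variety $G^N$, where $G$ acts on each factor by conjugation. Iterating Proposition \ref{pro:inclusion}(iii) over the $N$ factors gives $\Lambda_{f(x)}=\bigcap_{i=1}^{N}\Lambda_{f_i(x)}$, where each $\Lambda_{f_i(x)}$ refers to the conjugation action of $G$ on itself. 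By the definition of $P(\mu)$ in Definition \ref{def:parab}, $\mu\in\Lambda_{f_i(x)}$ exactly when $\mu^{+}f_i(x)$ exists, i.e.\ when $f_i(x)\in P(\mu)$. Hence $\mu\in\Lambda_x$ if and only if $f_i(x)\in P(\mu)$ for every $i$. Finally, since $P(\mu)$ is a (closed) subgroup of $G$, it contains all the generators $f_1(x),\dots,f_N(x)$ of $\phi_x$ precisely when it contains the subgroup $\phi_x$ they generate. Combining these equivalences yields $\mu\in\Lambda_x \iff \phi_x\subseteq P(\mu)$, which is (i).

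For (ii): by definition $H_x=\bigcap_{\lambda\in\Lambda_x}P(\lambda)$, and part (i) says every factor $P(\lambda)$ with $\lambda\in\Lambda_x$ contains $\phi_x$; intersecting over $\lambda\in\Lambda_x$ gives $\phi_x\subseteq H_x$. For (iii): recall $\eta$ is proper means that $H_x\subseteq P(\mu)$ implies $\mu\in\Lambda_x$ for all $x\in X$ and $\mu\in Y(G)$ (the converse implication being automatic, as noted after the definition of properness). So suppose $H_x\subseteq P(\mu)$; then (ii) gives $\phi_x\subseteq H_x\subseteq P(\mu)$, and (i) then gives $\mu\in\Lambda_x$. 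Hence $\eta$ is proper.

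I do not expect a genuine obstacle here: the whole argument is driven by the componentwise description of $\lambda$-rays in $G^N$. The only two points needing a moment of care are (a) that extending a $\lambda$-ray into the closed subvariety $X$ is the same as extending it into $G^N$ — this is exactly Propositions \ref{pro:rays}(ii)/\ref{pro:inclusion}(ii) — and (b) that one is allowed to pass from ``each $f_i(x)\in P(\mu)$'' to ``$\phi_x\subseteq P(\mu)$'', which uses that $P(\mu)$ is a subgroup rather than a mere subset. One could alternatively package (i) and (iii) as a several-variable version of Lemma \ref{lem:psi}, using the coordinate maps $f_1,\dots,f_N\colon X\to G$ in place of a single $\psi$, but the direct argument above is shorter and self-contained.
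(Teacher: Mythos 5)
Your proof is correct and follows essentially the same route as the paper's: reduce the existence of $\mu^{+}x$ to the componentwise existence of $\mu^{+}f_i(x)$ in $G^N$ (using that $X$ is closed so the limit descends back to $X$), observe that $P(\mu)$ being a subgroup lets one pass from the generators $f_i(x)$ to all of $\phi_x$, and then deduce (ii) and (iii) formally from (i). The only cosmetic difference is that you package the componentwise step via Proposition \ref{pro:inclusion}(iii) rather than arguing directly on the $\mu$-ray through $f(x)$.
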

\begin{proof}
To show (i), let $x\in X$ and $\mu\in Y(G)$. Consider $\mu_{x}:k^{\times}\rightarrow X$
and $\mu_{f(x)}:k^{\times}\rightarrow G^{N}$ the $\mu$-rays through
$x$ and $f(x)$, respectively. We have that $\mu_{f(x)}=f\circ\mu_{x}$
by Proposition \ref{pro:rays} (i).

Suppose that $\mu\in\Lambda_{x}$, meaning that $\mu^{+}x$ exists.
So if $\mu^{+}x$ exists then $\mu^{+}f(x)$ exists. This is equivalent
to the existence of $\mu^{+}f_{i}(x)$, for every $i=1,\cdots,N$.
But this one implies the existence of $\mu^{+}g$, for every $g\in\phi_{x}$.
Thus $\phi_{x}\subset P(\mu)$. 

Conversely, suppose now that $\phi_{x}\subseteq P(\mu)$, this implies,
in particular, that $\mu^{+}f_{i}(x)$ exists for every $i=1,\cdots,N$.
Thus, $\mu^{+}f(x)$ exists. Since $X$ is a closed subset of $G^{N}$,
we can guarantee the existence of $\mu^{+}x$ (see Proposition \ref{pro:rays}
(ii)). Thus if $\phi_{x}\subseteq P(\mu)$ then $\mu\in\Lambda_{x}$. 

To show (ii), using (i) we have $\phi_{x}\subseteq P(\mu)$ for every
$\mu\in\Lambda_{x}$ so $\phi_{x}\subset\bigcap_{\lambda\in\Lambda_{x}}P(\lambda)=H_{x}$.

Suppose that $H_{x}\subset P(\mu)$. Then, by (ii) $\phi_{x}\subset H_{x}\subset P(\mu)$,
which means that $\mu\in\Lambda_{x}$, according to (i). So, $\eta$
is proper.
\end{proof}
For this class of varieties we have the next properties.
\begin{prop}
\label{pro:CRrep-1}Let $x\in X$. Then $H_{x}$ is completely reducible
in $G$ if and only if $\phi_{x}$ is completely reducible in $G$.\end{prop}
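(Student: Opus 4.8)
The plan is to exploit the containment $\phi_x \subset H_x$ established in Proposition \ref{etaisproper-1}(ii), together with the fact (Proposition \ref{etaisproper-1}(iii)) that $\eta$ is proper for $X \subset G^N$, so that $H_x \subset P(\mu)$ if and only if $\mu \in \Lambda_x$ if and only if $\phi_x \subset P(\mu)$. This chain of equivalences says precisely that $\phi_x$ and $H_x$ are contained in exactly the same $R$-parabolic subgroups of the form $P(\mu)$, and since (by the Proposition preceding Definition \ref{def:parab}) every parabolic of $G$ — or, in the non-connected case, every $R$-parabolic — arises as some $P(\mu)$, the two subgroups lie in the same collection of ($R$-)parabolics.

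First I would prove the easy direction: if $H_x$ is completely reducible, then so is $\phi_x$. Suppose $\phi_x \subset P$ for some $R$-parabolic $P = P(\mu)$. By the equivalence above, $\mu \in \Lambda_x$, hence $H_x \subset P(\mu) = P$. Complete reducibility of $H_x$ gives an $R$-Levi subgroup $L$ of $P$ with $H_x \subset L$; since $\phi_x \subset H_x \subset L$, this exhibits the required $R$-Levi for $\phi_x$. Thus $\phi_x$ is completely reducible.

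For the converse, assume $\phi_x$ is completely reducible and suppose $H_x \subset P(\mu)$ for some $\mu \in Y(G)$. Since $\phi_x \subset H_x \subset P(\mu)$, complete reducibility of $\phi_x$ yields an $R$-Levi subgroup $L$ of $P(\mu)$ containing $\phi_x$; write $L = L(\mu')$ for a suitable $\mu'$ opposite to $\mu$ (using that all $R$-Levi subgroups of $P(\mu)$ have the form $L(u^{-1}\mu u)$ for $u \in U(\mu)$, so $\phi_x \subset P(\mu')$ with $\mu'$ opposite to $\mu$). By Proposition \ref{etaisproper-1}(i), $\phi_x \subset P(\mu')$ forces $\mu' \in \Lambda_x$, hence also $H_x \subset P(\mu')$. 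Therefore $H_x \subset P(\mu) \cap P(\mu') = L$, an $R$-Levi of $P(\mu)$, so $H_x$ is completely reducible in $G$.

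I expect the main subtlety to lie in the converse direction, specifically in the step of converting "$\phi_x$ sits inside some $R$-Levi of $P(\mu)$" into "$\phi_x \subset P(\mu')$ for a 1PS $\mu'$ opposite to $\mu$": one must invoke the structure theory of $R$-parabolics (Lemma \ref{lem:opposite} and the uniqueness statements of \cite{BMR} used in its proof, plus the description of $R$-Levi subgroups as conjugates $L(u^{-1}\mu u)$) to guarantee such a $\mu'$ exists and is genuinely opposite to $\mu$. Once that is in hand, properness of $\eta$ does the rest. The forward direction and the bookkeeping with $\phi_x \subset H_x$ are routine.
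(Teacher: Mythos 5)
Your proposal is correct and follows essentially the same route as the paper: both directions rest on $\phi_{x}\subset H_{x}$ and the equivalence $\phi_{x}\subset P(\mu)\Leftrightarrow\mu\in\Lambda_{x}\Leftrightarrow H_{x}\subset P(\mu)$ from Proposition \ref{etaisproper-1}, and in the converse you pass, exactly as the paper does, to an opposite $R$-parabolic $P(\mu')$ with $P(\mu)\cap P(\mu')=L$ to force $H_{x}\subset L$. Your write-up of the forward direction is in fact slightly cleaner than the paper's (which contains a typo at the step where it should conclude $H_{x}\subset P(\mu)$), but the argument is the same.
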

\begin{proof}
By (ii) of Proposition \ref{etaisproper-1}, we have that $\phi_{x}\subset H_{x}$. 

Suppose that $H_{x}=\bigcap_{\lambda\in\Lambda_{x}}P(\lambda)$ is
completely reducible. If $\phi_{x}\subset P(\mu)$ for some $\mu\in Y(G)$,
then by (i) of Proposition \ref{etaisproper-1} $\mu\in\Lambda_{x}$
and so $\phi_{x}\subset P(\mu).$ As $H_{x}$ is completely reducible,
$H_{x}\subset L$, with $L$ a R-Levi subgroup of $P(\mu)$. Thus
$\phi_{x}\subset L$ and then $\phi_{x}$ is completely reducible
in $G$.

Conversely, suppose that $\phi_{x}$ is completely reducible in $G$
and $H_{x}=\bigcap_{\lambda\in\Lambda_{x}}P(\lambda)\subset P(\mu)$
for some $\mu\in Y(G)$. By (ii) of Proposition \ref{etaisproper-1},
$\phi_{x}\subset H_{x}$ then $\rho(\Gamma)\subset P(\mu)$, so by
(i) of Proposition \ref{etaisproper-1}, $\mu\in\Lambda_{x}$. On
the other hand, as $\phi_{x}$ is completely reducible in $G$, there
exists a R-Levi subgroup $L=L(u\mu u^{-1})$ of $P(\mu)$, with $u\in U(\mu)$,
such that $\rho(\Gamma)\subset L$. Now, we can find a R-parabolic
subgroup of $G$, $P$, opposite to $P(\mu)$ so that $P(\mu)\cap P=L$,
and $P=P(u\mu^{-1}u^{-1})$. Thus $\phi_{x}\subset P$, so again by
(i) of Proposition \ref{etaisproper-1} $u\mu^{-1}u^{-1}\in\Lambda_{x}$.
Then $H_{x}\subset P$ and as also $H_{x}\subset P(\mu)$ we get that
$H_{x}\subset L$. Thus $H_{x}$ is completely reducible in $G$.\end{proof}
\begin{prop}
\label{etairred=00003DGirred-1}Let $x\in X$. If $H_{x}=G$, then
$\phi_{x}$ is irreducible in $G$.\end{prop}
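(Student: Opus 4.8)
The plan is to prove the contrapositive: assuming $\phi_x$ is \emph{not} irreducible in $G$, I will show that $H_x \neq G$. By definition of irreducibility, if $\phi_x$ is not irreducible, then $\phi_x$ is contained in some proper R-parabolic subgroup $P(\mu)$ of $G$, for a 1PS $\mu \in Y(G)$ with $\mu(k^\times) \not\subseteq Z(G)$ (so that $P(\mu) \neq G$).

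The key step is then to apply Proposition \ref{etaisproper-1}(i): since $\phi_x \subseteq P(\mu)$, we conclude that $\mu \in \Lambda_x$. By the definition of $H_x = \bigcap_{\lambda \in \Lambda_x} P(\lambda)$, this gives $H_x \subseteq P(\mu)$. Since $P(\mu)$ is a proper subgroup of $G$, it follows immediately that $H_x \subseteq P(\mu) \subsetneq G$, hence $H_x \neq G$. This completes the contrapositive, so $H_x = G$ implies $\phi_x$ is irreducible.

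There is essentially no obstacle here — the work has all been done in Proposition \ref{etaisproper-1}. The only point requiring a word of care is the observation that a proper R-parabolic containing $\phi_x$ has the form $P(\mu)$ with $\mu$ non-central; but this is exactly the content of Definition \ref{def:parab} together with the remark following the Proposition on parabolic and one parameter subgroups, namely that $P(\lambda) = G$ if and only if $\lambda(k^\times) \subseteq Z(G)$. Alternatively, one may phrase the argument directly via Proposition \ref{pro:eta(x)}(iv): if $H_x = G$, then $H_x$ is trivially irreducible, and then argue that $\phi_x \subseteq H_x$ (Proposition \ref{etaisproper-1}(ii)) would force $\phi_x$ into any R-parabolic containing it only if $H_x$ is — but this reasoning is circular unless one uses properness, so the contrapositive route above is cleaner.

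I would therefore write the proof in three sentences: suppose $\phi_x$ is not irreducible, pick a proper R-parabolic $P(\mu) \supseteq \phi_x$, invoke Proposition \ref{etaisproper-1}(i) to get $\mu \in \Lambda_x$ and hence $H_x \subseteq P(\mu) \neq G$, contradicting $H_x = G$.
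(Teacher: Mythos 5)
Your contrapositive argument is exactly the paper's proof: assume $\phi_{x}$ is contained in a proper R-parabolic $P(\mu)$ with $\mu$ non-central, invoke Proposition \ref{etaisproper-1}(i) to get $\mu\in\Lambda_{x}$, and conclude $H_{x}\subset P(\mu)\neq G$. The argument is correct and complete.
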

\begin{proof}
If $\phi_{x}$ is not irreducible in $G$, by definition, there exists
a non central $\mu\in Y(G)$ such that $\phi_{x}\subset P(\mu)$.
So by (i) of Proposition \ref{etaisproper-1}, $\mu\in\Lambda_{x}$,
thus $H_{x}\subset P(\mu)$, and we conclude that $H_{x}$ is not
irreducible.\end{proof}
\begin{prop}
\label{centralizer}For any $x\in X$, we have $G_{x}=Z_{G}(\phi_{x})$,
where $Z_{G}(\phi_{x})$ is the centralizer of $\phi_{x}$ and $Z(G)\subset G_{X}$.\end{prop}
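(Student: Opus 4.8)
The plan is to compute $G_{x}$ by pushing the question into the ambient variety $G^{N}$, where the simultaneous conjugation action turns the stabilizer condition into a centralizer condition, and then to dispatch the inclusion $Z(G)\subset G_{X}$ by a one-line intersection argument.

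\emph{Step 1: reduce to $G^{N}$.} Since $f\colon X\hookrightarrow G^{N}$ is an injective $G$-equivariant morphism, for every $x\in X$ and $g\in G$ one has $g\cdot x=x$ if and only if $g\cdot f(x)=f(x)$; hence $G_{x}=G_{f(x)}$, where the latter stabilizer is taken inside $G^{N}$.

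\emph{Step 2: unwind simultaneous conjugation.} Writing $f(x)=(f_{1}(x),\ldots,f_{N}(x))$ and recalling that $G$ acts on $G^{N}$ by $g\cdot(g_{1},\ldots,g_{N})=(gg_{1}g^{-1},\ldots,gg_{N}g^{-1})$, an element $g$ lies in $G_{f(x)}$ exactly when $gf_{i}(x)g^{-1}=f_{i}(x)$ for each $i$, that is, when $g$ commutes with each generator $f_{i}(x)$ of $\phi_{x}$. As the centralizer in $G$ of a subset is a subgroup, it contains the subgroup generated by that subset, so $g$ then commutes with every element of $\phi_{x}$, i.e.\ $g\in Z_{G}(\phi_{x})$; conversely any element of $Z_{G}(\phi_{x})$ fixes $f(x)$. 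Hence $G_{x}=G_{f(x)}=Z_{G}(\phi_{x})$, which is the first assertion (and in particular confirms that each $G_{x}$ is a closed subgroup of $G$).

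\emph{Step 3: the center.} By definition $G_{X}=\bigcap_{x\in X}G_{x}$, which by Step 2 equals $\bigcap_{x\in X}Z_{G}(\phi_{x})$. Any $z\in Z(G)$ commutes with all of $G$, hence \emph{a fortiori} with $\phi_{x}$ for every $x$, so $z\in Z_{G}(\phi_{x})$ for all $x\in X$ and therefore $z\in G_{X}$. I do not expect a genuine obstacle here: the proposition is essentially a definitional unwinding, and the only point requiring (minimal) care is the passage from ``$g$ centralizes the finite set $\{f_{1}(x),\ldots,f_{N}(x)\}$'' to ``$g$ centralizes $\phi_{x}$'', which is immediate once one recalls that centralizers of subsets are subgroups; no use of reductivity or connectedness of $G$ is needed.
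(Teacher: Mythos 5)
Your proof is correct and takes essentially the same route as the paper, which simply declares the identity $G_{x}=Z_{G}(\phi_{x})$ to be clear and notes that every stabilizer contains $Z(G)$; you have merely filled in the definitional unwinding (equivariance of the inclusion into $G^{N}$, and the passage from centralizing the generators to centralizing $\phi_{x}$).
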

\begin{proof}
The first statement is clear. To show the second one, first note that
the stabilizer of any $x\in X$ contains the center of the group.
Thus, $Z(G)\subset\bigcap_{x\in X}G_{x}=G_{X}$. 
\end{proof}
The proof of next result (Theorem \ref{thm:Main-Rich}) follows easily
from the work of several authors on $N$-tuples in algebraic groups
(see \cite[Theorems 3.6 and 4.1]{Rich}, \cite[Corollary 3.7]{BMR}
and \cite{Martin}). We include a self-contained proof for the convenience
of the reader. Recall that a $G$-variety is called central when $G_{X}=Z(G)$. 
\begin{thm}
\label{thm:polyCRrep-1}Let $x\in X\subset G^{N}$. Then 
\begin{enumerate}
\item $x$ is polystable if and only if $\phi_{x}$ is completely reducible.
\item $x$ is stable if and only if $\phi_{x}$ is irreducible.
\item If $X$ is central then $x$ is equicentral if and only if $\phi_{x}$
is isotropic.
\end{enumerate}
\end{thm}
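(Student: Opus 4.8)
The plan is to reduce all three statements to results already proved in the paper, exploiting that for $X\subset G^N$ the map $\eta$ is proper (Proposition \ref{etaisproper-1}(iii)) and that $Z(G)\subset G_X$ (Proposition \ref{centralizer}). For part (1), polystability of $x$ is equivalent, by Theorem \ref{thm:polyCR}, to $H_x$ being completely reducible in $G$, which by Proposition \ref{pro:CRrep-1} is equivalent to $\phi_x$ being completely reducible. For part (2), one direction is immediate from Proposition \ref{etairred=00003DGirred-1} combined with Proposition \ref{pro:eta(x)}(iv): if $x$ is stable then, since $Z(G)\subset G_X$, Theorem \ref{thm:1PSstableIrred}(1) gives $H_x=G$, hence $\phi_x$ is irreducible. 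For the converse, suppose $\phi_x$ is irreducible; I would first observe that $\phi_x\subset H_x$ forces $H_x$ to be irreducible as well, hence $H_x=G$ by Proposition \ref{pro:eta(x)}(iv). Then, using $1$PS stability: if $x$ were not stable, it would not be $1$PS stable (Theorem \ref{thm:stable-1ps-stable}), so there is $\lambda\in Y(G)$ with $\lambda(k^\times)\nsubseteq G_X$ and $\lambda^+x$ existing; by properness $H_x\subset P(\lambda)$, and since $Z(G)\subset G_X$ this $\lambda$ is non-central, so $P(\lambda)$ is a proper R-parabolic, contradicting $H_x=G$.

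For part (3), assume $X$ is central, i.e.\ $G_X=Z(G)$. Recall $G_x=Z_G(\phi_x)$ by Proposition \ref{centralizer}. A point $x$ is equicentral precisely when it is polystable and $G_x=G_X=Z(G)$, i.e.\ when $\phi_x$ is completely reducible (by part (1)) and $Z_G(\phi_x)=Z(G)$. By the definition of \emph{isotropic} given in the introduction, $\phi_x$ is isotropic exactly when it is completely reducible and its centralizer equals $Z(G)$. So the two conditions match termwise, giving the equivalence directly.

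The only slightly delicate point is the converse in part (2): I must make sure that ``$\phi_x$ irreducible'' genuinely propagates to ``$H_x=G$'' and then to stability, rather than only to polystability. The key is that irreducibility of the \emph{smaller} group $\phi_x$ already forbids containment in any proper R-parabolic, and since $\phi_x\subset H_x$, the larger group $H_x$ inherits this, so $H_x$ is irreducible and thus equals $G$ by Proposition \ref{pro:eta(x)}(iv). After that, the contrapositive via $1$PS stability closes the argument cleanly. An alternative, equally short route is to invoke Theorem \ref{thm:1PSstableIrred}(2) directly, since $X$ central gives $G_X=Z(G)$, obtaining ``$x$ stable $\iff H_x=G$'', and then combine with Propositions \ref{etairred=00003DGirred-1} and \ref{pro:CRrep-1}; but one should note that part (2) of the present theorem is asserted for all closed $G$-invariant $X\subset G^N$, not only central ones, so the self-contained argument above (which uses only $Z(G)\subset G_X$, always true here) is the one I would write out.

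I expect no real obstacle: every ingredient is in place, and the proof is essentially a bookkeeping exercise chaining Theorems \ref{thm:polyCR}, \ref{thm:1PSstableIrred}, \ref{thm:stable-1ps-stable} with Propositions \ref{etaisproper-1}, \ref{pro:CRrep-1}, \ref{etairred=00003DGirred-1}, \ref{centralizer} and \ref{pro:eta(x)}. The main thing to be careful about is tracking the distinction between stable, properly stable and $1$PS stable, and invoking the correct equivalence (Theorem \ref{thm:new-Mumford-crit}) so that the centrality hypothesis is used only where it is actually needed, namely in part (3).
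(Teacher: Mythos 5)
Your proof is correct, and for parts (1) and (3) it coincides with the paper's argument (part (1): chain Proposition \ref{etaisproper-1}(iii), Theorem \ref{thm:polyCR} and Proposition \ref{pro:CRrep-1}; part (3): reduce to $G_x=Z_G(\phi_x)$ and $G_X=Z(G)$). The one place you genuinely diverge is the converse of part (2). The paper argues directly on $\phi_x$: from complete reducibility it gets polystability via part (1), invokes Richardson's result that the stabilizer of a point with closed orbit is reductive, and then, assuming $NG_x$ infinite, produces a non-central $\lambda\in Y(G_x)$ whose parabolic $P(\lambda)$ contains $\phi_x$, contradicting irreducibility. You instead lift irreducibility from $\phi_x$ to $H_x$ (via $\phi_x\subset H_x$), conclude $H_x=G$ from Proposition \ref{pro:eta(x)}(iv), and then run the $1$PS-stability contrapositive. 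Your route stays entirely inside the paper's own machinery and avoids the external citation to Richardson, and you correctly observe that only $Z(G)\subset G_X$ (not centrality) is needed, so part (2) holds for all closed $G$-invariant $X\subset G^N$; this is a genuine, if modest, simplification. Two small points to tidy up: the implication $\lambda\in\Lambda_x\Rightarrow H_x\subset P(\lambda)$ is the trivial direction holding for any action (it is the definition of $H_x$ as an intersection), not a consequence of properness; and ``not $1$PS stable'' means \emph{either} not polystable \emph{or} the existence of a bad $\lambda$, so you should say explicitly that the first alternative is excluded because $\phi_x$ irreducible implies completely reducible, hence $x$ polystable by part (1), before passing to the existence of $\lambda$.
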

\begin{proof}
(1) By Proposition \ref{pro:CRrep-1} and Theorem \ref{thm:polyCR}
we just need to see that $\eta$ is proper. But this is true by (iii)
of Proposition \ref{etaisproper-1}. 

(2) If $x$ is stable in $X$, as $Z(G)\subset G_{X}$ (Proposition
\ref{centralizer}) and $\eta$ is proper ((iii) of Proposition \ref{etaisproper-1}),
we can apply Theorem \ref{thm:1PSstableIrred} (2) and therefore $H_{x}$
is irreducible in $G$. Thus by Proposition \ref{etairred=00003DGirred-1},
$\phi_{x}$ is irreducible.

Conversely, if $\phi_{x}$ is irreducible in $G$, since $\phi_{x}$
is completely reducible in $G$, by (1) we get that $x$ is polystable,
that is, $G\cdot x$ is closed in $G$. Then by \cite[1.3.3]{Rich}
$G_{x}$ is reductive. If we suppose that $G_{x}/G_{X}$ is infinite,
this implies that $G_{x}/Z(G)$ is infinite, as $Z(G)\subset G_{X}$.
So there exists $\lambda\in Y(G_{x})$ such that $\lambda(k^{\times})\nsubseteq Z(G)$.
Then $P(\lambda)$ is a proper parabolic subgroup of $G$ containing
$\phi_{x}$. This contradicts the assumption, concluding that $G_{x}/G_{X}$
is finite and thus $x$ is stable.

(3) It is enough to show that $G_{x}=G_{X}$ if and only if $Z_{G}(\phi_{x})=Z(G)$.
This is immediate by the assumption $G_{X}=Z(G)$ and the fact proven
in Proposition \ref{centralizer} that $G_{x}=Z_{G}(\phi_{x})$.
\end{proof}
From Proposition \ref{pro:CRrep-1} and Theorem \ref{thm:polyCRrep-1}
(2), Theorem \ref{thm:Hx-phix} follows. Consider now, for $N\in\mathbb{N}$
and a complex reductive $G$, with Lie algebra $\lie(G)$, the vector
space $X=(\lie(G))^{N}$, with the diagonal adjoint action of $G$.
The next result follows easily from Richardson (\cite[Thm. 3.6 and 4.1]{Rich}),
being also a natural corollary of our approach.
\begin{thm}
Let $k=\mathbb{C}$ and $X$ be a closed $G$-invariant subvariety
of $(\lie(G)){}^{N}$. An element $x\in X$ is polystable (resp. stable)
if and only if $H_{x}$ is completely reducible in $G$ (resp. $H_{x}=G$).\end{thm}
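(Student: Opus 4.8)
The plan is to reduce the statement to the already-established Theorem~\ref{thm:polyCRrep-1} applied to the $G$-variety $G^N$, via the exponential map. Concretely, fix a closed $G$-invariant subvariety $X \subset (\lie(G))^N$. I would like to build a $G$-equivariant map that lets me transport the question from $(\lie(G))^N$ to $G^N$. The natural candidate is the coordinatewise exponential $\mathrm{Exp} := (\exp,\dots,\exp) : (\lie(G))^N \to G^N$, which is $G$-equivariant for the diagonal adjoint actions (since $\exp$ intertwines the adjoint action on $\lie(G)$ with conjugation on $G$). The key technical input is Corollary~\ref{cor:adjoint} (and the Lemma preceding it), which shows that for $v \in \lie(G)$ and $\lambda \in Y(G)$, the limit $\lambda^+ v$ exists if and only if $\lambda^+ \exp(v)$ exists. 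Applying this in each of the $N$ coordinates, one gets: for $x = (v_1,\dots,v_N) \in (\lie(G))^N$ and $\lambda \in Y(G)$, $\lambda^+ x$ exists in $(\lie(G))^N$ iff $\lambda^+ \mathrm{Exp}(x)$ exists in $G^N$. Hence $\Lambda_x = \Lambda_{\mathrm{Exp}(x)}$, and therefore $H_x = \eta(x) = \eta(\mathrm{Exp}(x)) = H_{\mathrm{Exp}(x)}$ as subgroups of $G$.

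Next I would argue that $\eta$ is proper for $X$. Since $\mathrm{Exp}$ is $G$-equivariant with the property that $\lambda^+ x$ exists iff $\lambda^+\mathrm{Exp}(x)$ exists, Lemma~\ref{lem:psi} applies directly (taking $\psi = \mathrm{Exp}$ composed with, say, the first coordinate is not quite enough — instead one observes that the full statement $\lambda \in \Lambda_x \iff H_x \subset P(\lambda)$ follows from $\Lambda_x = \Lambda_{\mathrm{Exp}(x)}$ together with properness of $\eta$ on $G^N$, which holds by Proposition~\ref{etaisproper-1}(iii)). Alternatively, and more cleanly: properness of $\eta_{X}$ is inherited by restriction to closed $G$-subvarieties once it holds on an ambient variety with the same $\Lambda$-sets; but the simplest route is to note that the composite $X \xhookrightarrow{} (\lie(G))^N \xrightarrow{\mathrm{Exp}} G^N$ has $\Lambda_x$ equal to $\Lambda$ of its image, so $H_x = H_{\text{image}}$ and properness for $X$ follows from properness for $G^N$ (Proposition~\ref{etaisproper-1}(iii)) exactly as in the proof of Proposition~\ref{pro:functorial}(i).

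With $\eta$ proper on $X$, Theorem~\ref{thm:polyCR} gives immediately that $x \in X$ is polystable if and only if $H_x$ is completely reducible in $G$. For the stable case, I would note that $Z(G) \subset G_X$ holds automatically here (the center fixes every point of $(\lie(G))^N$ under the adjoint action, so $Z(G) \subset G_{(\lie(G))^N} \subset G_X$, cf.\ Proposition~\ref{centralizer}), so Theorem~\ref{thm:1PSstableIrred}(1) applies: if $x$ is stable then $H_x = G$. For the converse, $H_x = G$ forces $\Lambda_x \subset Y(Z(G))$ by properness (since $H_x \subset P(\lambda)$ must fail for every non-central $\lambda$, hence no non-central $\lambda$ lies in $\Lambda_x$); combined with Proposition~\ref{pro:lambda}(ii) this gives $Y(G_x) \subset Y(Z(G))$, and since $G_x$ is reductive (its orbit being closed, by \cite[1.3.3]{Rich}) a reductive group all of whose 1PS are central has finite quotient by $Z(G)$, hence $G_x/G_X$ is finite and $x$ is stable.

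The main obstacle I anticipate is the verification of $G$-equivariance and the limit-compatibility of $\mathrm{Exp}$ in a form that cleanly yields $\Lambda_x = \Lambda_{\mathrm{Exp}(x)}$ coordinatewise — this is essentially packaged in Corollary~\ref{cor:adjoint}, but one must be careful that $\exp$ is only analytic, not algebraic, so the argument that "$\lambda^+\exp(v)$ exists $\iff \lambda^+ v$ exists" genuinely rests on the explicit matrix computation in the Lemma before Corollary~\ref{cor:adjoint} (reducing to $GL(V)$ via an equivariant embedding and diagonalizing $\lambda$), rather than on any functoriality of limits. Once that compatibility is in hand, everything else is a formal consequence of the machinery of Section~4.
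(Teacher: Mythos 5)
Your proposal is correct, and it rests on the same key technical input as the paper's proof, namely the lemma preceding Corollary \ref{cor:adjoint} asserting that $\lambda^{+}v$ exists iff $\lambda^{+}\exp(v)$ exists. The packaging differs slightly: the paper obtains properness of $\eta$ on $(\lie(G))^{N}$ by combining Corollary \ref{cor:adjoint} (properness for the adjoint representation, itself proved via Lemma \ref{lem:psi} with $\psi=\exp$) with Proposition \ref{pro:functorial}(ii) (products of proper actions are proper), then restricts to $X$ via Proposition \ref{pro:functorial}(i) and invokes Theorem \ref{thm:Main}; you instead transport everything to $G^{N}$ through the coordinatewise exponential, deduce $\Lambda_{x}=\Lambda_{\mathrm{Exp}(x)}$ and $H_{x}=H_{\mathrm{Exp}(x)}$, and inherit properness from Proposition \ref{etaisproper-1}(iii). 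These are equivalent in substance. One point where your write-up is actually more careful than the paper's one-line proof: for a proper closed subvariety $X\subsetneq(\lie(G))^{N}$ one only knows $Z(G)\subset G_{X}$, not $G_{X}=Z(G)$, so Theorem \ref{thm:1PSstableIrred}(2) does not apply directly to the converse of the stable case; your explicit argument ($H_{x}=G$ forces $\Lambda_{x}\subset Y(Z(G))$, hence $Y(G_{x})\subset Y(Z(G))$, and reductivity of $G_{x}$ then gives $G_{x}/G_{X}$ finite) supplies exactly the missing step, mirroring the proof of Theorem \ref{thm:polyCRrep-1}(2). A cosmetic remark: the implication $\lambda\in\Lambda_{x}\Rightarrow H_{x}\subset P(\lambda)$ that you use there holds for any action and is not an instance of properness, so your parenthetical attribution to properness is a mislabel, though the logic is sound.
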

\begin{proof}
Consider first the case $X=(\lie(G)){}^{N}$, in which $Z(G)=G_{X}$.
Then, the statement is an easy consequence of Corollary \ref{cor:adjoint}
and Proposition \ref{pro:functorial} (ii), in view of Theorem \ref{thm:Main}.
The case when $X\subset(\lie(G)){}^{N}$ then follows from Proposition
\ref{pro:functorial} (i).
\end{proof}

\subsection{Stability in representation varieties}

Let $\Gamma$ be a finitely generated group. In this subsection we
let $X$ be the $G$-representation variety of $\Gamma$, that is
$X=\hom(\Gamma,G)$, where the $G$-action is by conjugation. It is
an affine closed $G$-invariant subvariety of $G^{N}$, so the results
obtained in the last subsection can be immediately applied to this
case. We will phrase them using the following standard terminology.
\begin{defn}
Let $\rho\in\hom(\Gamma,G)$, and $\rho(\Gamma)$ be its image in
$G$. The representation $\rho$ is called:
\begin{enumerate}
\item \emph{irreducible}, if $\rho(\Gamma)$ is an irreducible subgroup
in $G$. 
\item \emph{reductive}, if $\rho(\Gamma)$ is completely reducible in $G$. 
\item \emph{good}, if it is reductive and $G_{\rho}=Z(G)$.
\end{enumerate}
\end{defn}
\begin{rem}
(1) Because of these definitions, all properties satisfied by irreducible
(resp. completely reducible) subgroups are satisfied by irreducible
(resp. reductive) representations. In particular, we have the analogues
of the Propositions in Subsection \ref{sub:Irreducible-and-completely}.(2)
Another definition of irreducible representation, in the case $k$
algebraically closed and $G$ connected, appears in the work of Ramanathan
(\cite{Ramanathan}) which is equivalent to our definition if we use
Proposition \ref{pro:irredRam} and \cite[Prop. 2.1.2]{Ramanathan}. 
\end{rem}
Suppose now $\Gamma=\left\langle \gamma_{1},\cdots,\gamma_{N}|r_{1},\cdots,r_{N}\right\rangle $
is a presentation of the group $\Gamma$, where $\gamma_{1},\cdots,\gamma_{N}$
are generators for some $N\in\mathbb{N}$. From this, we form the
canonical projection map\[
\pi:F_{N}\twoheadrightarrow\Gamma\]
where $F_{N}$ is a free group of rank $N$. This allows us to view
any representation variety $X=\hom(\Gamma,G)$, by composing any representation
with $\pi$, as a closed $G$-invariant affine subvariety of $G^{N}=\hom(F_{N},G)$.
This allows the direct application of the previous results to this
class of varieties.
\begin{rem}
Note that, however, not all $G$-invariant closed affine subvarieties
of $G^{N}$ are character varieties for some $\Gamma$, as simple
examples show.
\end{rem}
Recalling the notation of the previous subsection applied to this
particular case, we have $f:\hom(\Gamma,G)\hookrightarrow\hom(F_{N},G)$
the inclusion such that for $\rho\in\hom(\Gamma,G)$, $f(\rho)=(\rho(\gamma_{1}),\cdots,\rho(\gamma_{N}))=\phi\in\hom(F_{N},G)=G^{N}$
and $\phi_{\rho}=\rho(\Gamma)=\rho(\pi(F_{N}))=\phi(F_{N})$. By Proposition
\ref{centralizer}, $Z(G)\subseteq G_{X}$ and for each $\rho\in\hom(\Gamma,G)$,
$G_{\rho}=Z_{G}(\rho(\Gamma)$). So it is easy to notice that a representation
$\rho$ is good if and only if $\rho(\Gamma)$ is isotropic.

Because we are in the conditions of Proposition \ref{etaisproper-1}
with $X=\hom(\Gamma,G)$, we conclude the following.
\begin{prop}
\label{etaisproper} Let $\rho\in\hom(\Gamma,G)$. 

(i) Let $\mu\in Y(G)$; Then $\mu\in\Lambda_{\rho}$ if and only if
$\rho(\Gamma)\subseteq P(\mu).$ 

(ii) $\rho(\Gamma)\subset\eta(\rho)=H_{\rho}.$ 

(iii) The map $\eta$ is proper. 

(iv) $H_{\rho}$ is completely reducible in $G$ if and only if $\rho(\Gamma)$
is completely reducible in $G$.\end{prop}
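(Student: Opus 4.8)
The plan is to obtain Proposition~\ref{etaisproper} as a direct specialization of the results of the previous subsection to the affine closed $G$-subvariety $X = \hom(\Gamma,G) \subset \hom(F_N,G) = G^N$. The only thing to settle at the outset is that this genuinely falls under the setup of that subsection: the relations $r_1,\dots,r_N$ exhibit $\hom(\Gamma,G)$ as a Zariski-closed $G$-invariant subset of $G^N$ via the $G$-equivariant inclusion $f$, and for $\rho\in\hom(\Gamma,G)$ the tuple $f(\rho)=(\rho(\gamma_1),\dots,\rho(\gamma_N))$ generates exactly the subgroup $\rho(\Gamma)$, since $\gamma_1,\dots,\gamma_N$ generate $\Gamma$. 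Thus the subgroup denoted $\phi_x$ in Proposition~\ref{etaisproper-1} is, in the present notation, precisely $\phi_\rho = \rho(\Gamma)$.

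Granting that identification, I would simply transcribe the earlier statements. Part~(i) is Proposition~\ref{etaisproper-1}(i); part~(ii) is Proposition~\ref{etaisproper-1}(ii); part~(iii) is Proposition~\ref{etaisproper-1}(iii); and part~(iv) is Proposition~\ref{pro:CRrep-1}, which says that $H_x$ is completely reducible in $G$ iff $\phi_x$ is, so substituting $\phi_x = \rho(\Gamma)$ yields the claim. Since every step is a direct appeal to an already-proven result, there is no real obstacle here; the only care required is bookkeeping — checking that the hypotheses ``affine, closed, $G$-invariant subvariety of $G^N$'' are met, and that the identification $\phi_\rho = \rho(\Gamma)$ is used consistently, together with the inclusion $Z(G)\subseteq G_X$ and $G_\rho = Z_G(\rho(\Gamma))$ recorded in Proposition~\ref{centralizer}.

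Should a self-contained argument be preferred, one can re-run the short proofs of Proposition~\ref{etaisproper-1}: for (i), compare the $\mu$-ray through $\rho$ in $X$ with its image under $f$ (Proposition~\ref{pro:rays}(i)) and observe that $\mu^+f(\rho)$ exists iff each $\mu^+\rho(\gamma_i)$ exists iff $\mu^+g$ exists for all $g\in\rho(\Gamma)$, using Proposition~\ref{pro:rays}(ii) to descend the limit back to the closed subvariety $X$; for (ii), combine (i) with the definition $H_\rho=\bigcap_{\lambda\in\Lambda_\rho}P(\lambda)$; for (iii), combine (i) and (ii); and for (iv), argue exactly as in Proposition~\ref{pro:CRrep-1}, passing between $P(\mu)$ and an opposite $R$-parabolic $P(u\mu^{-1}u^{-1})$ and using the characterization of $\Lambda_\rho$ in (i).
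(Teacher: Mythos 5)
Your proof is correct and follows exactly the paper's own route: the paper likewise observes that $\phi_{\rho}=\rho(\Gamma)$ and then cites Propositions \ref{etaisproper-1} and \ref{pro:CRrep-1} to obtain all four claims. Your additional bookkeeping (checking that $\hom(\Gamma,G)$ is a closed $G$-invariant subvariety of $G^{N}$ via the presentation) is sound and only makes the specialization more explicit.
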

\begin{proof}
As $\phi_{\rho}=\rho(\Gamma)$, all claims follow from Propositions
\ref{etaisproper-1} and \ref{pro:CRrep-1}. 
\end{proof}
As a consequence of Theorem \ref{thm:polyCRrep-1}, we obtain an equivalence
between stability and irreducibility notions.
\begin{prop}
\label{etairred=00003DGirred-2}\label{thm:polyCRrep} Let $X=\hom(\Gamma,G)$
and $\rho\in X$. We have:

(i) $\rho$ is polystable if and only if $\rho$ is reductive

(ii) If $\eta(\rho)$ is irreducible in $G$ then $\rho$ is irreducible.

(iii) $\rho$ is stable if and only if $\rho$ is irreducible.

(iv) If $X$ is central, then $\rho$ is equicentral if and only if
$\rho$ is good.\end{prop}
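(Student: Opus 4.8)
The final statement, Proposition \ref{etairred=00003DGirred-2}, is explicitly declared to be ``a consequence of Theorem \ref{thm:polyCRrep-1}''. So the plan is simply to transport Theorem \ref{thm:polyCRrep-1} across the identification $X=\hom(\Gamma,G)$, using the dictionary set up just before the statement: $\phi_\rho=\rho(\Gamma)$, a representation is \emph{reductive} iff $\rho(\Gamma)$ is completely reducible, \emph{irreducible} iff $\rho(\Gamma)$ is irreducible, and \emph{good} iff $\rho(\Gamma)$ is isotropic. The key preliminary observation, already recorded, is that $X=\hom(\Gamma,G)$ is a closed $G$-invariant subvariety of $G^N=\hom(F_N,G)$ via the surjection $\pi:F_N\twoheadrightarrow\Gamma$, so that Theorem \ref{thm:polyCRrep-1} applies verbatim with this $X$.

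For part (i): apply Theorem \ref{thm:polyCRrep-1}(1) to conclude $\rho$ is polystable iff $\phi_\rho$ is completely reducible, and then rewrite $\phi_\rho=\rho(\Gamma)$ and invoke the definition of \emph{reductive}. For part (iii): apply Theorem \ref{thm:polyCRrep-1}(2) to get $\rho$ stable iff $\phi_\rho$ is irreducible, again rewriting via $\phi_\rho=\rho(\Gamma)$ and the definition of \emph{irreducible}. For part (iv): here one must remember that $X=\hom(\Gamma,G)$ is central, i.e.\ $G_X=Z(G)$ --- this is exactly the extra hypothesis of Theorem \ref{thm:polyCRrep-1}(3), so under that hypothesis $\rho$ is equicentral iff $\phi_\rho=\rho(\Gamma)$ is isotropic, which is the definition of \emph{good}. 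One should also note that $Z(G)\subset G_X$ always holds here by Proposition \ref{centralizer}, so the assumption ``$X$ is central'' amounts to the reverse inclusion $G_X\subset Z(G)$.

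For part (ii): this is the only part not reading off directly from Theorem \ref{thm:polyCRrep-1}. The cleanest route is through Proposition \ref{etairred=00003DGirred-1}, which says precisely that if $H_x=G$ then $\phi_x$ is irreducible in $G$; combined with Proposition \ref{pro:eta(x)}(iv), $H_\rho$ irreducible in $G$ is equivalent to $H_\rho=G$. So if $\eta(\rho)=H_\rho$ is irreducible, then $H_\rho=G$, hence $\phi_\rho=\rho(\Gamma)$ is irreducible, i.e.\ $\rho$ is irreducible. Alternatively one could chain (iii) of this proposition with Theorem \ref{thm:1PSstableIrred}(1), but the direct appeal to Proposition \ref{etairred=00003DGirred-1} is shorter.

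Since every ingredient is a previously proved statement, there is no genuine obstacle here; the only thing to be careful about is bookkeeping --- making sure the hypothesis ``$X$ central'' is invoked exactly where Theorem \ref{thm:polyCRrep-1}(3) needs it in part (iv), and citing Proposition \ref{etaisproper-1}(iii) (properness of $\eta$) implicitly through Theorem \ref{thm:polyCRrep-1}, which already absorbed it. A one-paragraph proof collecting these four applications suffices; no new computation is required.
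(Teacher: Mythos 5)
Your proposal is correct and follows essentially the same route as the paper: parts (i), (iii), (iv) are read off from Theorem \ref{thm:polyCRrep-1} via the identification $\phi_\rho=\rho(\Gamma)$, and part (ii) is deduced from Proposition \ref{etairred=00003DGirred-1}. Your explicit citation of Proposition \ref{pro:eta(x)}(iv) to pass from ``$H_\rho$ irreducible'' to ``$H_\rho=G$'' is a small clarification the paper leaves implicit, but it is not a different argument.
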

\begin{proof}
To show (i), (iii) and (iv), we use again the equality $\rho(\Gamma)=\phi_{\rho}$
and Theorem \ref{thm:polyCRrep-1}. To prove (ii) note that showing
irreducibility of $\rho$ is the same proving that $\rho(\Gamma)$
is irreducible in $G$. We know that $\rho(\Gamma)=\phi_{\rho}$,
so if $H_{\rho}$ is irreducible, by Proposition \ref{etairred=00003DGirred-1}
we get that $\rho(\Gamma)=\phi_{\rho}$ is irreducible. The last statement
follows from (i) and Theorem \ref{thm:polyCRrep-1}(3).\end{proof}
\begin{rem}
\label{rem:central-rep-var}It is very common for representation varieties
to be central. A sufficient condition is that for every $g\in G$,
there exists a $\rho\in\hom(\Gamma,G)$ and an element $\gamma$ of
$\Gamma$ such that $\rho(\gamma)=g$, as one easily checks.
\end{rem}
We therefore obtained the following table that summarizes the situation
for a character variety $X=\hom(\Gamma,G)$. In each line we have
equivalences, and at the bottom line, the {*} indicates that we require
that $X$ is central: $G_{X}=Z(G)$.

\medskip{}

\begin{center}
{\footnotesize }\begin{tabular}{|c|c|c|}
\hline 
{\footnotesize Representation $\rho$} & {\footnotesize Image Subgroup $H=\rho(\Gamma)$} & {\footnotesize Affine GIT $\rho\in X$}\tabularnewline
\hline
\hline 
{\footnotesize reductive} & {\footnotesize completely reducible (CR)} & {\footnotesize polystable}\tabularnewline
\hline 
{\footnotesize reductive, $\lie(G_{\rho})=\lie(Z)$} & {\footnotesize irreducible} & {\footnotesize stable}\tabularnewline
\hline 
{\footnotesize good (reductive, $G_{\rho}=Z$)$^{*}$} & {\footnotesize isotropic (CR, $Z_{G}(H)=Z$)$^{*}$} & {\footnotesize equicentral$^{*}$}\tabularnewline
\hline
\end{tabular}
\par\end{center}{\footnotesize \par}

\subsection{Stability in character varieties and Higgs bundles}

In this last subsection, we present a restatement of the well known
correspondence between $G$-Higgs bundles and representations in terms
of the stability notions considered above, restricting to the case
$k=\mathbb{C}$. We start with a brief review the theory of $G$-Higgs
bundles, following mainly the article \cite{GGM} and references therein.

As mentioned in Section \ref{sub:Stability,proper-stability}, the
affine GIT quotient $X_{\Gamma}(G):=\hom(\Gamma,G)\quot G$ is an
affine algebraic set defined over $\mathbb{C}$ called the $G$-character
variety of $\Gamma$. It is an elementary fact that the properties
of irreducibility and complete reducibility are invariant by conjugation,
so they can be defined unambiguously for points of the character variety.

Let $\Sigma$ be a compact Riemann surface of genus $g\geq2$ and
let $\pi_{1}(\Sigma)$ be the fundamental group of $\Sigma$. The
space $\hom(\pi_{1}(\Sigma),G)$ is naturally identified with an algebraic
subvariety of $G^{2g}$.

Let us consider the universal central extension $\Gamma$ of $\pi_{1}(\Sigma)$
given by the short exact sequence $0\rightarrow\mathbb{Z}\rightarrow\Gamma\rightarrow\pi_{1}(\Sigma)\rightarrow1$
and $\Omega_{\Sigma}^{1}$ denote the canonical line bundle of $\Sigma$.

Recall that a $G$-Higgs bundle over $\Sigma$ is a pair $(E,\varphi)$
where $E$ is a holomorphic $G$-bundle over $\Sigma$ and $\varphi$
is a holomorphic section of $E(\mathfrak{g})\otimes\Omega_{\Sigma}^{1}$
with $E(\mathfrak{g})$ being the adjoint bundle of $E$ . In particular,
a $GL_{n}(\mathbb{C})$-Higgs bundle is what is called a Higgs bundle,
that it is a pair $(V,\phi)$ with $V$ a holomorphic rank $n$ vector
bundle and $\phi\in H^{0}(X,End(V)\otimes\Omega_{\Sigma}^{1})$.

There is a notion of (poly)stability for $G$-Higgs bundles. When
$G=GL_{n}(\mathbb{C})$ stability can be defined using what is called
the slope of a Higgs bundle. The notion was later generalized to the
case of $G$-Higgs bundles for $G$ complex (for the details, see
\cite{Simpson} or \cite{GGM}).

Consider now the moduli space of $G$-Higgs bundles over $X$, $\mathcal{M}_{X}(G)$,
that is the set of isomorphism classes of polystable $G$-Higgs bundles.
This moduli space has the structure of a complex algebraic variety,
for the details of its construction see Kobayashi (\cite{Kob}), Ramanathan
(\cite{Ramanathan1}), Simpson (\cite{SimpsonI,SimpsonII}) and Schmitt
(\cite{SchmittI,SchmittII}).

Denote by $X_{\Gamma}(G)$ the character variety of representations
$\rho:\Gamma\rightarrow G$ such that $\rho(J)\in Z(G)^{0}$, where
$J$ is a central element of $\Gamma$ and by $X_{\Gamma}^{r}(G)$
the moduli space of reductive representations in $X_{\Gamma}(G)$.
In the Appendix (Proposition \ref{pro:Zariski-closure}), we recall
that a representation $\rho$ is reductive (that is, $\rho(\Gamma)$
is completely reducible in $G$), if and only if the Zariski closure
of $\rho(\Gamma)$ in $G$ is a reductive group. 

Now, we are ready to state the non-abelian Hodge theorem. For details
see \cite{Hitchin,Simpson,GGM} and \cite{Corlette,Don}.
\begin{thm}
\label{thm:Hodge}Let $G$ be a connected reductive complex Lie group.
There is a homeomorphism $X_{\Gamma}^{r}(G)\simeq\mathcal{M}_{X}(G)$
under which a $G$-Higgs bundle is stable if and only if it comes
from an irreducible representation. Moreover, the $G$-Higgs bundle
is stable and simple if and only if the representation is reductive
with $Z_{G}(\rho)=Z(G)$.
\end{thm}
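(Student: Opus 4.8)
The plan is to obtain the statement as a reinterpretation of the classical non-abelian Hodge correspondence, the only genuine work being the reconciliation of the notions of ``irreducible'' and ``simple'' that appear there with the intrinsic group-theoretic notions fixed in Subsection~\ref{sub:Irreducible-and-completely}, and their transcription into the (poly)stability dictionary for representation varieties obtained above. The homeomorphism $X_{\Gamma}^{r}(G)\simeq\mathcal{M}_{X}(G)$ is itself not new: for $G=GL_{n}(\mathbb{C})$ it is the combination of the theorems of Hitchin and Simpson (existence of the harmonic metric on a polystable Higgs bundle) with those of Corlette and Donaldson (integrability of the harmonic metric of a reductive representation), and for an arbitrary connected reductive complex $G$ it is established in \cite{GGM} (see also \cite{Corlette,Don,Hitchin,Simpson}). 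To get the homeomorphism onto the whole of $X_{\Gamma}^{r}(G)$ one decomposes the latter according to the value $\rho(J)\in Z(G)^{0}$ of the image of the central generator $J$ of $\Gamma$, which records the topological type of the associated $G$-Higgs bundle, and sums over all types; this is routine, and I would only mention it.

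With the homeomorphism in hand, I would first recall the two standard refinements of the correspondence at the level of automorphisms and reducibility. Under $X_{\Gamma}^{r}(G)\simeq\mathcal{M}_{X}(G)$, a polystable $G$-Higgs bundle $(E,\varphi)$ corresponds to a reductive representation $\rho$, its automorphism group is isomorphic to the centralizer $Z_{G}(\rho(\Gamma))$, and $(E,\varphi)$ is stable precisely when the associated harmonic reduction does not reduce to a proper parabolic subgroup, i.e.\ when $\rho(\Gamma)$ is not contained in any proper parabolic of $G$ --- which is exactly what it means for $\rho$ to be \emph{irreducible} in the terminology of Subsection~\ref{sub:Irreducible-and-completely}. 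Consequently $(E,\varphi)$ is simple ($\Aut(E,\varphi)=Z(G)$) if and only if $Z_{G}(\rho(\Gamma))=Z(G)$, and it is stable and simple if and only if $\rho$ is irreducible with $Z_{G}(\rho(\Gamma))=Z(G)$; since an irreducible subgroup is in particular completely reducible, the latter condition is the same as $\rho$ being reductive with $Z_{G}(\rho)=Z(G)$, that is, $\rho$ is \emph{good} (equivalently, $\rho(\Gamma)$ is isotropic). This already yields both asserted equivalences.

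Finally, to place the result squarely inside the framework of the paper, I would chain these group-theoretic conditions through Proposition~\ref{thm:polyCRrep}: part~(i) identifies ``$\rho$ reductive'' with ``$\rho$ polystable'', part~(iii) identifies ``$\rho$ irreducible'' with ``$\rho$ stable as a point of $\hom(\Gamma,G)$'', and part~(iv) identifies ``$\rho$ good'' with ``$\rho$ equicentral'', the last requiring that the representation variety be central, which follows from Remark~\ref{rem:central-rep-var} once one checks that every element of $G$ arises as $\rho(\gamma)$ for some $\rho$ and $\gamma$ (use the surface relation of $\pi_{1}(\Sigma)$, $g\geq2$, and that $\Gamma$ surjects onto it). The main obstacle is thus purely bookkeeping: verifying that the conventions for ``stable'', ``simple'' $G$-Higgs bundle and ``irreducible'' representation in the cited references coincide with the intrinsic definitions used here, and checking the centrality hypothesis; the analytic heart --- the harmonic metric and the homeomorphism --- is borrowed wholesale from the literature.
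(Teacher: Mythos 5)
Your proposal matches the paper's treatment: Theorem \ref{thm:Hodge} is quoted there without proof, with the analytic content attributed wholesale to \cite{Hitchin,Simpson,GGM} and \cite{Corlette,Don}, exactly as you do. Your final paragraph chaining through Proposition \ref{thm:polyCRrep} is in fact the content of the Corollary the paper derives \emph{from} Theorem \ref{thm:Hodge} rather than part of its proof, but this is harmless extra material and the terminological reconciliation you describe is the only genuine work involved.
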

Note that this result generalizes both the Theorem of Narasimhan-Seshadri
which states that a vector bundle on $X$ of rank $n$ is stable if
and only if it is associated to a irreducible representation of $\Gamma$
in $U(n)$ and the Theorem of Ramanathan which states that a holomorphic
principal $G$-bundle is stable if and only if it is associated to
an irreducible unitary representation of $\Gamma$ in $G$, satisfying
$\rho(\Gamma)\subset K$, where $K$ is the maximal compact subgroup
of $G$. We observe that in \cite{GGM} this result is extended also
to the case of real reductive groups, which we do not consider here.

Using Theorem \ref{thm:polyCRrep}, we can restate the non-abelian
Hodge theorem as follows. Note that, by Theorem \ref{thm:polyCRrep},
and Proposition \ref{pro:Zariski-closure} we have the identity $X_{\Gamma}^{r}(G)=X_{\Gamma}^{ps}(G)$,
where $X_{\Gamma}^{ps}(G)$ denotes the moduli space of polystable
representations inside $X_{\Gamma}(G)$. 
\begin{cor}
Let $G$ be a complex connected reductive Lie group. There is a homeomorphism
$X_{\Gamma}^{ps}(G)\simeq\mathcal{M}_{X}(G)$ under which a $G$-Higgs
bundle is stable if and only if it comes from a stable representation.
Moreover, the $G$-Higgs bundle is stable and simple if and only if
the representation is good.
\end{cor}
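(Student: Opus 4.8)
The plan is to deduce the corollary directly from Theorem \ref{thm:Hodge} together with the dictionary between the representation-theoretic notions (irreducible, reductive, good) and the GIT notions (stable, polystable, equicentral) established in Proposition \ref{thm:polyCRrep}. First I would observe that the set-theoretic identity $X_{\Gamma}^{r}(G)=X_{\Gamma}^{ps}(G)$ holds: by definition $X_{\Gamma}^{r}(G)$ consists of (conjugacy classes of) reductive representations inside $X_{\Gamma}(G)$, and by Proposition \ref{thm:polyCRrep}(i) a representation $\rho$ is reductive if and only if it is polystable, so these are the same subsets of the character variety. Since polystability and reductivity are conjugation-invariant (as noted in Section 5.3), this identification is well defined at the level of moduli spaces, and the homeomorphism of Theorem \ref{thm:Hodge} transports verbatim to a homeomorphism $X_{\Gamma}^{ps}(G)\simeq\mathcal{M}_{X}(G)$.

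Next I would translate the two refinements in Theorem \ref{thm:Hodge}. Under the homeomorphism, a $G$-Higgs bundle is stable iff it comes from an irreducible representation; but by Proposition \ref{thm:polyCRrep}(iii), $\rho$ is irreducible if and only if $\rho$ is stable as a point of the $G$-variety $X=\hom(\Gamma,G)$. Substituting this equivalence gives exactly the first assertion of the corollary. Likewise, the $G$-Higgs bundle is stable and simple iff the representation is reductive with $Z_G(\rho)=Z(G)$; this last condition is precisely the definition of a good representation, and by Proposition \ref{thm:polyCRrep}(iv) — using that $X_{\Gamma}(G)$ is central in the relevant setting (see Remark \ref{rem:central-rep-var}) — a representation is good if and only if the corresponding point is equicentral. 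So "stable and simple" corresponds to "good", proving the second assertion.

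I do not anticipate a serious obstacle here, since the corollary is essentially a change of vocabulary; the only point requiring a little care is the centrality hypothesis $G_X=Z(G)$ needed to invoke Proposition \ref{thm:polyCRrep}(iv). I would verify this holds for $X_{\Gamma}(G)$ in the Higgs-bundle context: Proposition \ref{centralizer} already gives $Z(G)\subseteq G_X$, and the reverse inclusion follows from the sufficient condition in Remark \ref{rem:central-rep-var} (surjectivity of evaluation maps onto $G$), which is satisfied for the universal central extension $\Gamma$ of a surface group of genus $g\geq 2$. With that checked, the corollary follows by simply rewriting Theorem \ref{thm:Hodge} through the equivalences of Proposition \ref{thm:polyCRrep}.
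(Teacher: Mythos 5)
Your proposal is correct and follows exactly the route the paper intends: the corollary is obtained by rewriting Theorem \ref{thm:Hodge} through the dictionary of Proposition \ref{thm:polyCRrep} (reductive $\Leftrightarrow$ polystable for the identification $X_{\Gamma}^{r}(G)=X_{\Gamma}^{ps}(G)$, irreducible $\Leftrightarrow$ stable, and ``reductive with $Z_{G}(\rho)=Z(G)$'' being by definition ``good''). Your extra care about the centrality hypothesis via Remark \ref{rem:central-rep-var} is a sensible (and slightly more explicit) check than the paper provides, but it does not change the argument.
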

As a concluding remark, we note that the setup in this article allows
the consideration of the moduli space $X_{\Gamma}^{r}(G)$ of polystable
representations of $\Gamma$ in a complex reductive group $G$ which
is not necessarily connected. As far as we know, Theorem \ref{thm:Hodge}
has not been proved in general for the case of $G$-Higgs bundles
with $G$ not connected (see however \cite{GGM2,O} for definitions
and some non-connected real reductive groups). On the other hand,
our results seem to indicate that this form of the non-abelian Hodge
theorem should still be valid in this case, for an adequate notion
of stability of $G$-Higgs bundles, defined in terms of reduction
of structure group to R-parabolics.

\appendix

\section{Completely Reducible, Irreducible and Isotropic subgroups}

Let $G$ be a reductive affine algebraic group (not necessarily irreducible)
defined over an algebraically closed field $k$ of characteristic
$0$. For the convenience of the reader, in this appendix we state
some known properties of completely reducible, irreducible, isotropic
and $Ad$-irreducible subgroups of $G$, and compare to other relevant
results in the literature (See \cite{BMR,BMR1,Martin,Sikora}).

\subsection{Alternative characterizations of complete reducibility and irreducibility}

We begin by recalling the following characterization of the centralizer
of an irreducible subgroup,
\begin{prop}
\label{pro:The-centralizer-of} The centralizer of an irreducible
subgroup of $G$ is a finite extension of $Z(G)$.\end{prop}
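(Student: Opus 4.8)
The assertion says that $Z(G)$ has finite index in $Z_G(H)$; since $Z(G)\subseteq Z_G(H)$ for any subgroup $H$, and $Z_G(H)$ is an algebraic group (hence has only finitely many connected components), it is enough to show $Z_G(H)^{0}=Z(G)^{0}$, equivalently $\dim Z_G(H)=\dim Z(G)$. The first step is to reduce to the case that $H$ is Zariski closed: replacing $H$ by $\overline{H}$ does not change the centralizer, because $Z_G(H)=Z_G(\overline{H})$ (centralizing a fixed element is a closed condition), and it preserves irreducibility by Remark~\ref{rem:closure}.

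Next I would argue by contradiction, assuming $\dim Z_G(H)>\dim Z(G)$. Since $Z(G)^{0}\subseteq Z_G(H)^{0}$ with both connected, this forces $D:=Z_G(H)^{0}\not\subseteq Z(G)$. The key structural input is that $D$ is reductive: an irreducible subgroup is $G$-completely reducible, and the centralizer of a $G$-completely reducible subgroup is reductive (Bate--Martin--R\"ohrle, \cite{BMR}). Alternatively, this can be obtained inside the present paper: by Noetherianity one can pick $g_{1},\dots,g_{N}\in H$ generating a Zariski-dense subgroup $H_{0}=\overline{H}$ (so $Z_G(H)=Z_G(H_{0})$, and $H_{0}$ is still irreducible by Remark~\ref{rem:closure}); the tuple $(g_{1},\dots,g_{N})\in G^{N}$ is then polystable by Theorem~\ref{thm:polyCRrep-1}, so its stabilizer, which equals $Z_G(H_{0})$ by Proposition~\ref{centralizer}, is reductive by \cite[1.3.3]{Rich}. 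Granting reductivity of $D$: the conjugates of any one maximal torus $T$ of $D$ form a Zariski-dense subset of $D$ (they exhaust the semisimple elements); since $Z(G)$ is normal in $G$, if $T\subseteq Z(G)$ then every $D$-conjugate of $T$ lies in the closed set $Z(G)$, forcing $D\subseteq Z(G)$, a contradiction. Hence $D$ contains a maximal torus $T$ with $T\not\subseteq Z(G)$; and since $T$ is generated, as an abstract group, by the images of its cocharacters, some $\lambda\in Y(T)\subseteq Y(D)\subseteq Y(G)$ satisfies $\lambda(k^{\times})\not\subseteq Z(G)$.

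To finish, recall that $\lambda(k^{\times})\not\subseteq Z(G)$ implies $P(\lambda)\neq G$, i.e.\ $P(\lambda)$ is a proper R-parabolic subgroup of $G$. On the other hand $\lambda(k^{\times})\subseteq D\subseteq Z_G(H)$, so $H$ centralizes $\lambda(k^{\times})$; since $L(\lambda)$ is exactly the centralizer of $\lambda(k^{\times})$, this gives $H\subseteq L(\lambda)\subseteq P(\lambda)$, a proper R-parabolic, contradicting the irreducibility of $H$. Therefore $\dim Z_G(H)=\dim Z(G)$, which completes the proof. The only non-elementary ingredient, and the step I expect to require the most care, is the reductivity of $Z_G(H)$ — equivalently, ruling out a noncentral connected part of the centralizer containing no noncentral torus; the rest is a formal manipulation with R-parabolics, maximal tori and cocharacters.
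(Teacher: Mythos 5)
Your argument is correct and follows essentially the same route as the paper's own proof: reductivity of $Z_{G}(H)$ (via Bate--Martin--R\"ohrle), extraction of a cocharacter $\lambda\in Y(Z_{G}(H))$ with $\lambda(k^{\times})\nsubseteq Z(G)$, and the inclusion $H\subseteq L(\lambda)\subseteq P(\lambda)$ with $P(\lambda)$ a proper R-parabolic, contradicting irreducibility. The additional details you supply (reduction to $\overline{H}$, the density-of-tori argument producing the noncentral cocharacter, and the alternative, non-circular derivation of reductivity from polystability of a generating tuple via Theorem~\ref{thm:polyCRrep-1} and Richardson) are all correct and simply make explicit what the paper leaves implicit.
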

\begin{proof}
See \cite[Lemma 6.2]{Martin}. We give an alternative proof analogous
to the proof of Theorem \ref{thm:polyCRrep-1} (2). Let $H$ be an
irreducible subgroup of $G$ then $Z_{G}(H)$ is reductive (see \cite[Section 6.3]{BMR}).
Suppose that $Z_{G}(H)$ is an infinite extension of $Z(G)$. As $Z_{G}(H)$
is reductive, this implies that there exists $\lambda\in Y(Z_{G}(H))$
such that $\lambda(k^{\times})\nsubseteq Z(G)$. So $P(\lambda)$
is a proper R-parabolic of $G$ and $H\subset P(\lambda)$. Then $H$
is not irreducible.
\end{proof}
To see if a completely reducible subgroup is irreducible we have the
next criterion. We denote by $\lie(H)$ the Lie algebra of an algebraic
group $H$.
\begin{prop}
\label{pro:irreducibility-criterion}For a completely reducible subgroup
$H$ of $G$, the following claims are equivalent. 
\begin{enumerate}
\item $H$ is irreducible in $G$; 
\item $\dim Z_{G}(H)=\dim Z(G)$;
\item $\lie(Z_{G}(H))=\lie Z(G)$.
\end{enumerate}
\end{prop}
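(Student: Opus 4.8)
The plan is to establish the three implications $(2)\Leftrightarrow(3)$, $(1)\Rightarrow(2)$ and $(2)\Rightarrow(1)$; the first two are formal, and all the real content sits in $(2)\Rightarrow(1)$, which is the only place where the standing hypothesis that $H$ is completely reducible gets used.

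First, for $(2)\Leftrightarrow(3)$: since $Z(G)$ centralises every element of $G$, one always has the inclusion of closed subgroups $Z(G)\subseteq Z_{G}(H)$, hence $\lie Z(G)\subseteq\lie(Z_{G}(H))$. As $\mathrm{char}\,k=0$, the dimension of an algebraic group agrees with the dimension of its Lie algebra, and dimension is insensitive to passing to the identity component; therefore the inclusion $\lie Z(G)\subseteq\lie(Z_{G}(H))$ is an equality exactly when $\dim Z_{G}(H)=\dim Z(G)$. For $(1)\Rightarrow(2)$ I would simply quote Proposition \ref{pro:The-centralizer-of}: if $H$ is irreducible, then $Z_{G}(H)$ is a finite extension of $Z(G)$, so $Z(G)^{0}=Z_{G}(H)^{0}$ and in particular $\dim Z_{G}(H)=\dim Z(G)$.

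The main implication, $(2)\Rightarrow(1)$, I would prove by contraposition. Suppose $H$ is completely reducible but not irreducible, so that $H$ is contained in some proper $R$-parabolic $P$ of $G$. Complete reducibility yields an $R$-Levi subgroup $L$ of $P$ with $H\subseteq L$, and since $L\subseteq P\subsetneq G$ we have $L\neq G$. Writing $L=L(\mu)$ for a suitable $\mu\in Y(G)$ and recalling that $L(\mu)$ is precisely the centralizer $Z_{G}(\mu(k^{\times}))$, the inequality $L\neq G$ forces $\mu(k^{\times})\not\subseteq Z(G)$; as $\mu(k^{\times})$ is connected, it is then not contained in $Z(G)^{0}$ either, and in particular $\mu$ is non-trivial, so $\mu(k^{\times})$ is a one-dimensional torus. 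Moreover $\mu(k^{\times})\subseteq Z_{G}(L(\mu))\subseteq Z_{G}(H)$, so $Z_{G}(H)$ contains the two commuting connected subgroups $Z(G)^{0}$ (central in $G$) and $\mu(k^{\times})$, with $\mu(k^{\times})\not\subseteq Z(G)^{0}$. Hence $Z(G)^{0}\cdot\mu(k^{\times})$ is a connected subgroup of $Z_{G}(H)$ of dimension $\dim Z(G)^{0}+1>\dim Z(G)$, and therefore $\dim Z_{G}(H)>\dim Z(G)$; that is, $(2)$ fails.

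The step that I expect to need the most care is, within $(2)\Rightarrow(1)$, the passage from ``$H$ lies in a proper $R$-parabolic'' to ``$H$ lies in the centralizer of a non-central one-parameter subgroup''. This rests on two structural facts about the possibly disconnected reductive group $G$, namely that every $R$-Levi subgroup of an $R$-parabolic is of the form $L(\mu)$ for some $\mu\in Y(G)$, and that $L(\mu)=Z_{G}(\mu(k^{\times}))$; both are recorded above (the latter in the Proposition stated just after Definition \ref{def:parab}) and go back to \cite{BMR}. Beyond this one only needs the elementary remark, already noted in the text, that $P(\mu)=G$ is equivalent to $\mu(k^{\times})\subseteq Z(G)$, which is what guarantees that an $R$-Levi of a \emph{proper} $R$-parabolic produces a genuinely non-central $\mu$.
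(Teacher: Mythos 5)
Your proof is correct, and for two of the three implications it coincides with the paper's: $(1)\Rightarrow(2)$ via Proposition \ref{pro:The-centralizer-of}, and $(2)\Leftrightarrow(3)$ via the inclusion $\lie Z(G)\subseteq\lie(Z_{G}(H))$ together with $\dim\lie(K)=\dim K$. Where you genuinely diverge is the implication $(2)\Rightarrow(1)$: the paper simply cites \cite[Corollary 16]{Sikora}, whereas you give a self-contained contrapositive argument --- $H$ completely reducible but reducible lands in an $R$-Levi $L(\mu)$ of a proper $R$-parabolic, the non-central one-dimensional torus $\mu(k^{\times})\subseteq Z_{G}(L(\mu))\subseteq Z_{G}(H)$ then forces $\dim Z_{G}(H)\geq\dim Z(G)+1$. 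This is a real gain: it keeps the proposition internal to the paper's own toolkit (Definition \ref{def:parab}, the identity $L(\mu)=Z_{G}(\mu(k^{\times}))$, and the remark that $P(\mu)=G$ iff $\mu(k^{\times})\subseteq Z(G)$), it works for disconnected $G$ without further comment, and it is in fact the mirror image of the paper's own alternative proof of Proposition \ref{pro:The-centralizer-of} (which extracts a non-central $1$PS of $Z_{G}(H)$ to produce a proper parabolic containing $H$), so the two directions become visibly dual. The only step you leave implicit is that $\dim\bigl(Z(G)^{0}\cdot\mu(k^{\times})\bigr)=\dim Z(G)^{0}+1$ requires $Z(G)^{0}\cap\mu(k^{\times})$ to be finite; this follows at once from $\mu(k^{\times})$ being a one-dimensional torus not contained in $Z(G)^{0}$, which you have already recorded, so nothing is missing.
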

\begin{proof}
(1)$\Rightarrow$(2) See A.1.

$(2)\Rightarrow(1)$\cite[Corollary 16]{Sikora}.

(2)$\Leftrightarrow$(3) By Corollary 3.6 of \cite{Bo} we have $\dim\lie(Z_{G}(H))=\dim Z_{G}(H)$
and $\dim\lie(Z(G))=\dim Z(G)$ . So it is easy to see that (3)$\implies$(2).
For the other implication, if $\dim Z_{G}(H)=\dim Z(G)$ then $\dim\lie(Z_{G}(H))=\dim\lie(Z(G))$,
as $\lie(Z(G))$ is a $k$-vector subspace of $\lie(Z_{H}(G))$, they
must be equal.
\end{proof}
The next results provide other ways to check complete reducibility
and irreducibility.
\begin{prop}
\label{pro:Zariski-closure} Consider a subgroup $H\subset G$. Then
$H$ is completely reducible in $G$ if and only if $\overline{H}$,
the Zariski closure of $H$ in $G$, is a reductive group. %
\begin{comment}
Similarly, for a representation $\rho:\Gamma\to G_{k}$, $\rho$ is
completely reducible if and only if $\overline{\rho(\Gamma)}$ is
a reductive group in $G_{k}$.
\end{comment}
{}\end{prop}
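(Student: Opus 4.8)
The plan is to prove the two implications separately, using in an essential way the fact (Remark \ref{rem:closure}) that $H$ is completely reducible if and only if its Zariski closure $\overline{H}$ is, so that from the outset we may replace $H$ by the closed subgroup $\overline{H}$ and thus assume $H$ is a closed (algebraic) subgroup of $G$.

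For the forward direction, suppose $H$ is completely reducible in $G$; I want to show $H$ is a reductive group, i.e.\ that $R_{u}(H^{0})$ is trivial. Assume, for contradiction, that $R_{u}(H)$ (the unipotent radical of the identity component) is nontrivial. The key input is a Borel--Tits type theorem: any nontrivial connected unipotent normal subgroup $U$ of $H$ is contained in the unipotent radical $R_{u}(P)$ of some proper R-parabolic $P$ of $G$ that normalizes $U$, and moreover $H$ itself normalizes $U$, hence $H \subset N_{G}(U) \subset P$. (This is the standard argument: the normalizer of a nontrivial unipotent subgroup sits inside a proper parabolic, valid for possibly non-connected reductive $G$ by the work cited in \cite{BMR}.) Now complete reducibility of $H$ gives an R-Levi $L$ of $P$ with $H \subset L$; since $R_{u}(H) \subset R_{u}(P)$ and $L \cap R_{u}(P) = \{e\}$, we get $R_{u}(H) = \{e\}$, a contradiction. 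Hence $H$ is reductive.

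For the converse, suppose $H = \overline{H}$ is a reductive algebraic group, and let $P = P(\lambda)$ be any R-parabolic of $G$ containing $H$; I must produce an R-Levi $L$ of $P$ with $H \subset L$. Consider the composite of the inclusion $H \hookrightarrow P$ with the projection $\pi \colon P = L(\lambda) \ltimes U(\lambda) \to L(\lambda)$ onto an R-Levi factor. Since $H$ is reductive and $\ker(\pi|_{H}) = H \cap U(\lambda)$ is a normal unipotent subgroup of $H$, hence trivial, the map $\pi|_{H}$ is injective; moreover $\pi|_{H}$ and the inclusion $H \hookrightarrow P$ agree modulo $U(\lambda)$, so $\pi(H)$ is a reductive subgroup of $P$ isomorphic to $H$ and lying in the image of a section of $\pi$. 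One then invokes the standard conjugacy/rigidity statement for reductive subgroups of a parabolic relative to the unipotent radical: two complements to $U(\lambda)$ that both contain a given reductive subgroup are conjugate by an element of $U(\lambda)$; concretely, there is a $u \in U(\lambda)$ with $u H u^{-1} \subset L(\lambda)$, equivalently $H \subset u^{-1} L(\lambda) u = L(u^{-1}\lambda u)$, and $L(u^{-1}\lambda u)$ is an R-Levi of $P(u^{-1}\lambda u) = P(\lambda) = P$ since $u \in U(\lambda) \subset P(\lambda)$. This $L$ works, so $H$ is completely reducible.

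The main obstacle is the non-connectedness of $G$ (and of $H$): both the Borel--Tits input in the forward direction and the $U(\lambda)$-conjugacy of reductive complements in the converse are classically stated for connected groups, and one must either cite the non-connected refinements from \cite{BMR} (where R-parabolic and R-Levi subgroups were set up precisely to make these statements go through) or reduce to the connected case by treating $H^{0}$ first and then controlling the finite group $H/H^{0}$. I would structure the write-up to isolate these two group-theoretic facts as the only nontrivial ingredients, citing \cite{BMR} and \cite{Martin}, and keep the rest of the argument as the short formal manipulation above.
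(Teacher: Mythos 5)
Your argument is correct, but it is worth noting that the paper itself gives no proof here: it simply cites \cite[Section 2.2]{BMR1}, where this equivalence (in characteristic zero) is established. What you have written is essentially a reconstruction of the standard argument behind that citation, and you have correctly isolated the two nontrivial inputs: for the forward direction, the Borel--Tits-type statement that the normalizer of a nontrivial (connected) unipotent subgroup lies in a proper R-parabolic $P$ with the unipotent subgroup inside $R_{u}(P)$, which forces $R_{u}(H^{0})$ into $L\cap R_{u}(P)=\{e\}$ once complete reducibility puts $H$ in an R-Levi $L$; and for the converse, Mostow's theorem in characteristic zero that any reductive subgroup of $P(\lambda)=L(\lambda)\ltimes U(\lambda)$ is conjugate into $L(\lambda)$ by an element of $U(\lambda)$, so that $H\subset L(u^{-1}\lambda u)$, an R-Levi of $P(\lambda)$. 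Two small points to tighten: first, in the forward direction you should note explicitly that $R_{u}(H^{0})$ is characteristic in $H^{0}$ and hence normal in all of $H$, which is what lets you conclude $H\subset N_{G}(R_{u}(H^{0}))\subset P$; second, in the converse the detour through injectivity of the projection $\pi|_{H}$ is unnecessary (though it is harmless, and in characteristic zero $H\cap U(\lambda)$ is automatically connected, hence trivial by reductivity of $H$) --- Mostow applies directly to $H$ as a reductive subgroup of $P(\lambda)$. Your flagging of the non-connectedness issue is the right instinct: both ingredients need the refinements for R-parabolics of non-connected $G$ worked out in \cite{BMR}, and since the paper's definition of complete reducibility is phrased in terms of R-parabolics, one must check that the Borel--Tits parabolic produced in the first step is indeed of the form $P(\lambda)$. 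With those citations in place, your write-up would serve as a self-contained proof where the paper only delegates.
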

\begin{proof}
See \cite[Section 2.2]{BMR1}\end{proof}
\begin{prop}
\label{pro:irredRam} Assume that $G$ is connected. A subgroup $H\subset G$
is irreducible if and only if $H$ does not leave any parabolic subalgebra
of $\lie(G)$ invariant.\end{prop}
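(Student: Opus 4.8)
The plan is to reduce the statement to the standard dictionary between parabolic subgroups of a connected group and parabolic subalgebras of its Lie algebra. Two ingredients are needed. First, since $G$ is connected, every parabolic subgroup of $G$ is an R-parabolic subgroup (shown in Section~2), so $H\subset G$ is irreducible exactly when it is contained in no proper parabolic subgroup of $G$. Second, I would invoke the classical correspondence (valid in characteristic zero; see e.g.\ \cite{Bo}): the assignment $P\mapsto\lie(P)$ is a bijection from parabolic subgroups of $G$ onto parabolic subalgebras of $\lie(G)$, its inverse is $\mathfrak{p}\mapsto N_{G}(\mathfrak{p})$, each parabolic subalgebra is self-normalizing, $N_{G}(\lie(P))=P$, and $\lie(P)=\lie(G)$ forces $P=G$ (a closed subgroup of full dimension in a connected group is the whole group). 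In particular this matches proper parabolic subgroups with proper parabolic subalgebras, and one should read ``parabolic subalgebra'' in the statement as a proper one, since $\lie(G)$ itself is trivially invariant under every subgroup.

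The heart of the argument is then the remark that, for $\mathfrak{p}=\lie(P)$, the condition that $H$ leaves $\mathfrak{p}$ invariant, i.e.\ $\Ad(h)\,\mathfrak{p}=\mathfrak{p}$ for all $h\in H$, is equivalent to $H\subset N_{G}(\mathfrak{p})=P$. Granting this, I would argue by contraposition in both directions. If $H$ is \emph{not} irreducible, pick a proper parabolic $P\supset H$; since $hPh^{-1}=P$ for every $h\in H$, we get $\Ad(h)\,\lie(P)=\lie(P)$, so $H$ leaves the proper parabolic subalgebra $\lie(P)$ invariant. Conversely, if $H$ leaves some proper parabolic subalgebra $\mathfrak{p}$ invariant, write $\mathfrak{p}=\lie(P)$; then $\mathfrak{p}\neq\lie(G)$ gives $P\neq G$, and $H\subset N_{G}(\mathfrak{p})=P$ exhibits $H$ inside a proper parabolic, so $H$ is not irreducible.

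The only nontrivial point is the structure-theoretic dictionary used in the first paragraph --- in particular the self-normalizing property $N_{G}(\lie(P))=P$ and the fact that every parabolic subalgebra arises as $\lie(P)$ --- so the ``main obstacle'' is really just locating the right citation (\cite{Bo}, and cf.\ \cite[Prop.~2.1.2]{Ramanathan} for the statement itself); the rest is a formal unwinding of definitions via the adjoint action.
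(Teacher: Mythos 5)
Your proposal is correct and follows essentially the same route as the paper: both directions rest on the bijection between parabolic subgroups and parabolic subalgebras together with the self-normalizing property ($H$ stabilizing $\mathfrak{p}=\lie(P)$ forces $H\subset N_{G}(\mathfrak{p})=P$, and conversely $H\subset P$ trivially gives $\Ad(H)\lie(P)\subset\lie(P)$). Your explicit remark that ``parabolic subalgebra'' must be read as a \emph{proper} one is a worthwhile clarification that the paper's proof only makes implicitly.
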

\begin{proof}
If $H$ is irreducible, suppose that there is a parabolic subalgebra,
$\mathfrak{p}$, of $\lie(G)$, such that, $H$ leaves it invariant,
i.e., $\Ad(H)(\mathfrak{p})\subset\mathfrak{p}$($\Ad$ is the adjoint
representation of $G$ in $\lie(G)$). Let $P$ be the parabolic group
whose Lie algebra is $\mathfrak{p}$. So $H\subset N_{G}(P)=P$ (where
$N_{G}(P)$ is the normalizer of $P$ in $G$ and it is equal to $P$
because this one is parabolic). This contradicts the assumption of
$H$ being irreducible.

If $H$ is not irreducible then there is a parabolic subgroup $P$
such that $H\subset P$. Let $\mathfrak{p}$ be the Lie algebra of
$P$ which is a parabolic subalgebra of $\lie(G)$. Via the adjoint
representation, $P$ acts on $\mathfrak{p}$. Now we just have to
prove that $\Ad(H)(\mathfrak{p})\subset\mathfrak{p}$. We can suppose
that $G\subset GL(n)$ for some $n\in\mathbb{N}$, in this case $\Ad(g)(Y)=gYg^{-1}$
for every $g\in G$ and $Y\in\lie(G)$. So if we restrict to $g\in H$
and $Y\in\mathfrak{p}$, as $H\subset P$, $gYg^{-1}\in\mathfrak{p}.$
Thus $H$ leaves invariant a parabolic subalgebra of $\lie(G)$. 
\end{proof}

\subsection{\label{sec:appen}Irreducible, $Ad$-irreducible and isotropic subgroups.}
\begin{defn}
A completely reducible subgroup $H\subset G$ is called \emph{isotropic}
in $G$ if $Z_{G}(H)=Z(G)$.\end{defn}
\begin{rem}
For $\rho\in\hom(\Gamma,G)$ is trivial to see that $\rho$ is good
in $\hom(\Gamma,G)$ if and only if $\rho(\Gamma)$ is isotropic in
$G$. 
\end{rem}
By Proposition \ref{pro:irreducibility-criterion}, we can see that
if $H$ is isotropic, then it is irreducible. But the converse is
not true, as the following example shows.
\begin{example}
\label{exa:ad-irreducible} Let $G=PGL_{2}(\mathbb{C})$ and $H$
be the subgroup of $PGL_{2}(\mathbb{C})$ generated by $\gamma_{1}:=\left(\begin{array}{cc}
i & 0\\
0 & -i\end{array}\right)$ and $\gamma_{2}:=\left(\begin{array}{cc}
0 & 1\\
-1 & 0\end{array}\right)$. The subgroup $H$ is reductive because it is finite and by Proposition
\ref{pro:Zariski-closure} it is completely reducible. We can prove
that $H=Z_{PGL_{2}(\mathbb{C})}(H)\neq Z(PGL_{2}(\mathbb{C}))=1$,
to see this it is enough to check that all $h\in H$ centralizes $\gamma_{1}$
and $\gamma_{2}$. Thus, $H$ is not isotropic, but it is irreducible
because $\dim Z_{PGL_{2}(\mathbb{C})}(H)=\dim Z(PGL_{2}(\mathbb{C}))=0$
by Proposition \ref{pro:irreducibility-criterion}.\end{example}
\begin{rem}
In \cite{HP} for the case $G=PSL_{2}(\mathbb{C})$ and in \cite{Sikora}
for $G$ an algebraic group over $\mathbb{C}$, it is also introduced
the definition of $Ad$-irreducible subgroup of $G$ which is a subgroup
whose image by the adjoint representation is irreducible in $GL(\lie(G))$.
By \cite[Proposition 13]{Sikora} every $Ad$-irreducible subgroup
of $G$ is isotropic.
\end{rem}
In the remainder of the appendix, we will see that the notion of irreducible
subgroup is unaffected when we restrict the group to its semisimple
quotient. The same will hold for the notion of completely reducible
subgroup. On the contrary, this will not hold for the notion of a
isotropic subgroup.
\begin{prop}
\label{homored}Let $\phi:G_{1}\rightarrow G_{2}$ be a homomorphism
of reductive algebraic groups.

(i) If $H$ is a completely reducible subgroup of $G_{1}$ then $\phi(H)$
is a completely reducible subgroup of $G_{2}.$

(ii) Suppose that $\phi$ is an epimorphism, then if $H$ is an irreducible
subgroup of $G_{1}$ then $\phi(H)$ is an irreducible subgroup of
$G_{2}.$ \end{prop}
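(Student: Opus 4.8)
The plan is to deduce part (i) from the Zariski-closure characterization of complete reducibility (Proposition \ref{pro:Zariski-closure}), and then to bootstrap part (ii) from part (i) together with the infinitesimal criterion for irreducibility (Proposition \ref{pro:irreducibility-criterion}). For (i), by Remark \ref{rem:closure} I may replace $H$ by its Zariski closure, so assume $H$ is closed; then $H$ being completely reducible in $G_{1}$ means, by Proposition \ref{pro:Zariski-closure}, that $H$ is a reductive group. The image $\phi(H)$ is a closed subgroup of $G_{2}$ (the image of a closed subgroup under a homomorphism of algebraic groups is closed), and $\phi|_{H}\colon H\to\phi(H)$ is a surjective homomorphism; since a quotient of a reductive group is again reductive (characteristic zero), $\phi(H)=\overline{\phi(H)}$ is reductive, so Proposition \ref{pro:Zariski-closure} applied in $G_{2}$ shows that $\phi(H)$ is completely reducible.

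For (ii), assume in addition that $\phi$ is surjective and that $H$ is irreducible in $G_{1}$; again take $H$ closed. In particular $H$ is completely reducible, so by part (i) the subgroup $\phi(H)$ is completely reducible in $G_{2}$, and by Proposition \ref{pro:irreducibility-criterion} it suffices to show that $\lie(Z_{G_{2}}(\phi(H)))=\lie(Z(G_{2}))$. I will get this by transporting the corresponding equality for $H$ in $G_{1}$ (which holds by Proposition \ref{pro:irreducibility-criterion}, since $H$ is irreducible) along the differential $\dd\phi\colon\lie(G_{1})\to\lie(G_{2})$. Two standard facts in characteristic zero drive the argument: first, $\dd\phi$ is surjective and $G_{1}$-equivariant, intertwining $\Ad_{G_{1}}$ with $\Ad_{G_{2}}\circ\phi$; second, for any linearly reductive subgroup $M\subset G_{1}$ one has $\lie(Z_{G_{1}}(M))=\lie(G_{1})^{\Ad(M)}$, and the functor of $M$-invariants is exact, so that $\dd\phi$ maps $\lie(G_{1})^{\Ad(M)}$ onto $\lie(G_{2})^{\Ad(\phi(M))}$. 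Applying this with $M=H$ (which is reductive, hence linearly reductive) yields $\dd\phi(\lie(Z_{G_{1}}(H)))=\lie(Z_{G_{2}}(\phi(H)))$, and applying it with $M=G_{1}$ (using $\phi(G_{1})=G_{2}$) yields $\dd\phi(\lie(Z(G_{1})))=\lie(Z(G_{2}))$. Since $\lie(Z_{G_{1}}(H))=\lie(Z(G_{1}))$ by irreducibility of $H$, these two identities force $\lie(Z_{G_{2}}(\phi(H)))=\lie(Z(G_{2}))$, and the conclusion follows from Proposition \ref{pro:irreducibility-criterion} applied in $G_{2}$.

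The main obstacle is that centralizers do not behave well under $\phi$ once $\phi$ has a nontrivial kernel: in general $Z_{G_{2}}(\phi(H))\neq\phi(Z_{G_{1}}(H))$, so one cannot run the argument at the level of groups. The device that circumvents this is to pass to Lie algebras, where the short exact sequence $0\to\Ker\dd\phi\to\lie(G_{1})\to\lie(G_{2})\to0$ is a split exact sequence of semisimple $H$-modules (and of $G_{1}$-modules), whence taking invariants is exact; this is precisely where characteristic zero, through the linear reductivity of $H$ and of $G_{1}$, enters in an essential way. I would also invoke the elementary facts that $\lie(Z_{G}(M))=\lie(G)^{\Ad(M)}$ for $M$ linearly reductive in characteristic zero, and that the image of a closed subgroup under a homomorphism of algebraic groups is closed; both are standard and require no extra work.
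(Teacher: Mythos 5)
Your argument is correct, but it is genuinely different from what the paper does: the paper's entire proof is a citation to \cite[\S 6]{BMR}, where both statements are established characteristic-freely by working directly with R-parabolic and R-Levi subgroups and with the behaviour of cocharacters under $\phi$. You instead exploit characteristic zero twice. For (i) you reduce to the Zariski closure via Remark \ref{rem:closure} and use Proposition \ref{pro:Zariski-closure} (complete reducibility $\Leftrightarrow$ reductive closure) together with the facts that images of algebraic subgroups are closed and that quotients of (linearly) reductive groups are reductive; this is clean and complete, and indeed (i) for a non-surjective $\phi$ is a genuinely characteristic-zero phenomenon, so your proof is arguably the natural one here. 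For (ii) you bootstrap through the infinitesimal criterion of Proposition \ref{pro:irreducibility-criterion}, transporting $\lie(Z_{G_{1}}(H))=\lie(Z(G_{1}))$ along $\dd\phi$ using exactness of $M$-invariants for linearly reductive $M$ applied to $0\to\Ker\dd\phi\to\lie(G_{1})\to\lie(G_{2})\to0$, once with $M=\overline{H}$ and once with $M=G_{1}$. The ingredients you invoke --- surjectivity of $\dd\phi$ for an epimorphism, $\lie(Z_{G}(M))=\lie(G)^{\Ad(M)}$, and $\lie\bigl(\bigcap_{i}H_{i}\bigr)=\bigcap_{i}\lie(H_{i})$ --- are all standard but are specifically characteristic-zero facts, consistent with the paper's standing hypothesis; you should also note explicitly that the $H$-invariants and $\overline{H}$-invariants of a rational representation coincide, so that linear reductivity of $\overline{H}$ suffices. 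The trade-off is that your route is self-contained modulo the appendix propositions (note that Proposition \ref{pro:irreducibility-criterion}(2)$\Rightarrow$(1) is itself only cited from \cite{Sikora}), whereas the BMR argument is uniform in all characteristics and avoids Lie algebras altogether.
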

\begin{proof}
See \cite[6.2]{BMR}. 
\end{proof}
As $G$ is reductive, we know that $G/Z(G)$ is a semisimple group
. Let $\phi:G\rightarrow G/Z(G)$ be the canonical projection. We
have the following results.
\begin{prop}
\label{compredsemi-1}Let $H\subset G$ be a subgroup of $G$. Then

(i) $H$ is completely reducible if and only if $\phi(H)$ is completely
reducible.

(ii) $H$ is irreducible if and only if $\phi(H)$ is irreducible.\end{prop}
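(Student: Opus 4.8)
The plan is to reduce everything to results already available. Write $\bar G:=G/Z(G)$, which is semisimple, hence reductive, and note that $\phi$ is an epimorphism of reductive groups with kernel $Z(G)$. The two forward implications are then immediate from Proposition \ref{homored}: part (i) of that proposition gives ``$H$ completely reducible in $G$ $\Rightarrow$ $\phi(H)$ completely reducible in $\bar G$'', and part (ii) gives ``$H$ irreducible in $G$ $\Rightarrow$ $\phi(H)$ irreducible in $\bar G$''. So only the converses need an argument.

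For the converse in (i) I would pass to Zariski closures. One first checks $\overline{\phi(H)}=\phi(\overline H)$ (the right-hand side is the homomorphic image of the algebraic group $\overline H$, hence closed, it contains $\phi(H)$, and it is contained in $\overline{\phi(H)}$ by continuity of $\phi$). By Proposition \ref{pro:Zariski-closure}, applied in $G$ and in $\bar G$, it then suffices to prove: $\overline H$ is reductive $\iff$ $\phi(\overline H)$ is reductive. The forward direction is clear. For the reverse, look at
\[
1\longrightarrow \overline H\cap Z(G)\longrightarrow \overline H \overset{\phi}{\longrightarrow} \phi(\overline H)\longrightarrow 1 .
\]
The kernel is central in $\overline H$ and its identity component lies inside $Z(G)^0$, which is a torus (the connected centre of the connected reductive group $G^0$); so $\overline H\cap Z(G)$ contains no non-trivial connected unipotent subgroup. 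Since $\phi$ sends the unipotent radical $R_u(\overline H^0)$ into a connected unipotent normal subgroup of $\phi(\overline H^0)=\phi(\overline H)^0$ — which is trivial, $\phi(\overline H)$ being reductive — we get $R_u(\overline H^0)\subseteq \overline H^0\cap Z(G)$, whence $R_u(\overline H^0)=\{e\}$. Thus $\overline H$ is reductive and $H$ is completely reducible in $G$.

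For the converse in (ii), suppose $\phi(H)$ is irreducible in $\bar G$. Then $\phi(H)$ is in particular completely reducible, so by part (i) the subgroup $H$ is completely reducible in $G$; by Proposition \ref{pro:irreducibility-criterion} it remains to show $\dim Z_G(H)=\dim Z(G)$. Irreducibility of $\phi(H)$ in the semisimple group $\bar G$ means, again by Proposition \ref{pro:irreducibility-criterion}, that $Z_{\bar G}(\phi(H))$ has dimension $\dim Z(\bar G)=0$, i.e.\ is finite. As $\phi$ carries centralizers into centralizers, $\phi(Z_G(H))\subseteq Z_{\bar G}(\phi(H))$ is finite, so $Z_G(H)^0\subseteq\ker\phi=Z(G)$, hence $Z_G(H)^0\subseteq Z(G)^0$; the opposite inclusion is automatic, so $\dim Z_G(H)=\dim Z(G)$ and $H$ is irreducible in $G$. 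This completes both parts.

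The one genuinely substantive point is the reverse implication in the reductivity equivalence used in (i): that $\overline H$ inherits reductivity from its quotient $\phi(\overline H)$. This is exactly where the hypothesis $\ker\phi=Z(G)$ enters — the kernel is central with torus identity component, so it cannot contain a non-trivial piece of a unipotent radical — and it is the step I would write out in full detail. Everything else is bookkeeping with Propositions \ref{homored}, \ref{pro:Zariski-closure} and \ref{pro:irreducibility-criterion}, together with the elementary facts that homomorphic images of algebraic groups are closed and that $\phi$ maps centralizers into centralizers.
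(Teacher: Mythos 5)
Your proof is correct, but it takes a different route from the paper's. For the two forward implications both you and the paper invoke the functoriality of (complete) reducibility under epimorphisms of reductive groups (Proposition \ref{homored}, i.e.\ \cite[6.2]{BMR}); the paper in fact disposes of all of part (i) by that citation alone, whereas you supply a genuine argument for the converse of (i) via Proposition \ref{pro:Zariski-closure}: you reduce to showing that $\overline{H}$ is reductive whenever $\phi(\overline{H})$ is, and the key point --- that $R_u(\overline{H}^{0})$ dies in the quotient, hence lands in the central kernel $Z(G)$, whose identity component is a torus and so contains no nontrivial connected unipotent subgroup --- is exactly right. For the converse of (ii) the paper argues directly with R-parabolics: if $H\subset P$ with $P$ proper, then $\phi(P)$ is an R-parabolic of $G/Z(G)$ by \cite[Lemma 6.15]{BMR}, and it is proper because $Z(G)\subset P$, contradicting irreducibility of $\phi(H)$. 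You instead first get complete reducibility of $H$ from part (i) and then verify the numerical criterion $\dim Z_G(H)=\dim Z(G)$ of Proposition \ref{pro:irreducibility-criterion}, using that $\phi(Z_G(H))\subseteq Z_{G/Z(G)}(\phi(H))$ is finite. The paper's argument is shorter and stays at the level of the definition of irreducibility, at the cost of importing the fact that epimorphisms carry R-parabolics to R-parabolics; yours avoids that lemma but leans on the appendix characterizations (Propositions \ref{pro:Zariski-closure} and \ref{pro:irreducibility-criterion}), which are themselves established by citation. Both are sound; only a cosmetic remark: $Z(G)^{0}$ is a torus because it is a connected subgroup of $Z(G^{0})$, not because it equals the connected centre of $G^{0}$, but this does not affect your argument.
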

\begin{proof}
For the proof of (i) and the implication (ii) $\Rightarrow$, see
\cite[6.2]{BMR}.

To show (ii)$\Leftarrow$, suppose $\phi(H)$ is irreducible. If $H\subset P$
for $P$ a proper R-parabolic subgroup of $G$, by \cite[Lemma 6.15]{BMR}
$\phi(P)$ is a R-parabolic subgroup of $G/Z(G)$. 

If $\phi(P)=G/Z(G)=\phi(G)$, as $Z(G)\subset P$ we have that $P/Z(G)=G/Z(G)$
and then $P=G$. So $\phi(P)$ is proper. And $\phi(H)\subset\phi(P)$
which is impossible because $\phi(H)$ is irreducible.
\end{proof}
By Proposition \ref{pro:The-centralizer-of}, if a subgroup $H$ of
$G$ is irreducible then the centralizer of $H$ in $G$, $Z_{G}(H)$,
is a finite extension of the center of $G$, $Z(G)$. Using Proposition
\ref{compredsemi-1}(ii), we see that $Z_{G/Z(G)}(\phi(H))$ is also
a finite extension of the center of $G/Z(G)$. The same does not happen
for the property $Z_{G}(H)=Z(G)$, as we see next.
\begin{example}
\label{exa:semigood}Let $G=GL_{2}(\mathbb{C})$ and $H$ be the subgroup
of $GL_{2}(\mathbb{C})$ generated by the matrices $\gamma_{1}$ and
$\gamma_{2}$ of Example \ref{exa:ad-irreducible}. The subgroup $H$
is reductive because it is finite and by Proposition \ref{pro:Zariski-closure}
it is completely reducible. We can prove that $Z_{GL_{2}(\mathbb{C})}(H)=Z(GL_{2}(\mathbb{C}))=\mathbb{C}^{*}$,
so that $H$ is isotropic. However, consider now the image $\phi(H)$
of $H$ under the quotient $\phi:G\to PGL_{2}(\mathbb{C})=G/Z(G)$.
Using example \ref{exa:ad-irreducible} we know that $\phi(H)$ is
not isotropic because $Z_{PGL_{2}(\mathbb{C})}(\phi(H))\neq Z(PGL_{2}(\mathbb{C}))=1$.
\end{example}

\section*{Acknowledgments}

We thank G. Röhrle for some useful references to previous literature
and their relation to this work, and the anonymous referee for useful
comments leading to a few more statements and an improved presentation.
We also thank I. Biswas, S. Lawton and A. Schmitt for interesting
conversations on these topics, and the Isaac Newton Mathematics Institute,
where some parts of this article were produced during the {}``Moduli
Spaces'' Programme, for the hospitality. The authors were partially
supported by FCT (Portugal) through Project PTDC/MAT/099275/2008.
\bibliographystyle{amsalpha}
\addcontentsline{toc}{section}{\refname}\bibliography{StableRepsHiggs}

\end{document}